\newtheorem{theo}{Theorem}[section]
\newtheorem{prop}[theo]{Proposition}
\newtheorem{defi}[theo]{Definition}
\newtheorem{lemm}[theo]{Lemma}
\newtheorem{coro}[theo]{Corollary}
\newtheorem{exer}[theo]{Exercise}
\theoremstyle{definition}
\newtheorem{rema}[theo]{Remark}
\newcommand{\wh}{\widehat}
\newcommand{\wt}{\widetilde}
\newcommand{\mb}{\mathbb}
\newcommand{\mc}{\mathcal}
\newcommand{\mf}{\mathfrak}
\newcommand{\sub}{\subseteq}
\newcommand{\p}{\mathfrak{p}}
\newcommand{\Zp}{\mathbb{Z}_{p}}
\newcommand{\Qp}{\mathbb{Q}_{p}}
\newcommand{\Z}{\mathbb{Z}}
\newcommand{\ol}{\overline}
\newcommand{\bu}{\bullet}
\newcommand{\oo}{\mathcal{O}}
\newcommand{\A}{\mathbb{A}}
\newcommand{\spe}{\mathrm{sp}}
\newcommand{\m}{\mathfrak{m}}
\newcommand{\Ab}{\mathrm{Ab}}
\newcommand{\C}{\check{C}}
\newcommand{\cH}{\check{H}}
\newcommand{\lb}{[\![}
\newcommand{\rb}{]\!]}
\newcommand{\Gm}{\mathbb{G}_m}
\newcommand{\shom}{\mc{H}\mathrm{om}}
\DeclareMathOperator{\Ext}{Ext}
\DeclareMathOperator{\Tor}{Tor}
\DeclareMathOperator{\Hom}{Hom}
\DeclareMathOperator{\Coker}{Coker}
\DeclareMathOperator{\Spa}{Spa}
\DeclareMathOperator{\Spec}{Spec}
\DeclareMathOperator{\Spf}{Spf}
\DeclareMathOperator{\GL}{GL}
\DeclareMathOperator{\Ker}{Ker}
\DeclareMathOperator{\Sh}{Sh}
\DeclareMathOperator{\PSh}{PSh}
\DeclareMathOperator{\Ch}{Ch}
\DeclareMathOperator{\K}{K}
\DeclareMathOperator{\Aut}{Aut}
\begin{document}

\title{Topics in adic spaces}

\author{Christian Johansson}
\address{Christian Johansson\\Department of Mathematical Sciences, Chalmers University of Technology and the University of Gothenburg, 412 96 Gothenburg, Sweden}
\email{chrjohv@chalmers.se}
\urladdr{http://www.math.chalmers.se/~chrjohv/}

\begin{abstract}
These are expanded notes from a four lecture mini-course given by the author at the Spring School on Non-archimedean geometry and Eigenvarieties, held at the University of Heidelberg in March 2023. The course discusses coherent sheaves, various properties of morphisms as well as the links to formal geometry and Berkovich spaces.
\end{abstract}

\maketitle

\section*{Introduction}

These are expanded lecture notes from a four lecture mini-course during the Spring School on Non-archimedean geometry and Eigenvarieties, held at the University of Heidelberg in March 2023. The course was the last of three mini-courses, following \cite{bergdall-heid,hubner-heid}, that introduced the theory of adic spaces and prepared for courses on eigenvarieties during the second week of the spring school. The notes have been revised and expanded after the Spring School, but is still based around the material covered in the four lectures given. The material a slightly disjoint collection of topics:

\begin{itemize}
\item Coherent sheaves;

\item Properties of morphisms;

\item Quasi-Stein spaces and the link to formal geometry;

\item Uniformization of curves and Berkovich spaces.
\end{itemize}  

Roughly speaking, these four items correspond to the four lectures. Broadly speaking, coherent sheaves, morphisms and quasi-Stein spaces are both important on their own and serve as preparation for the theory of eigenvarieties, while the remaining topics serve to illustrate the links between the theory of adic spaces and other theories of nonarchimedean geometry, focusing on curves to get a concrete illustration.

What we cover is neither comprehensive nor original, and we do not give full proofs. Instead, we have tried to mix proofs with giving an overview and some intuition. There are many excellent sources that we recommend, including \cite{hu-etale,bosch,fresnel-vanderput,berkeleylectures,aws-perfectoid}. In particular, the book \cite{fresnel-vanderput} contains a wealth of interesting geometric examples and applications.

Exercises were sprinkled throughout the text, for the benefit of the Spring School participants. For the revision, we have decided to mostly keep them in their original form. Some exercises aim to fill in details in the text and others focus on trying to develop intuition. For some exercises, we do not necessarily expect that the reader will be able to fill in all details without perhaps consulting the literature, and we encourage the reader to do so as needed.

\subsection*{Notation and conventions}

We assume the basic concepts in the theory of adic spaces, as covered in the lectures of Bergdall and H\"ubner \cite{bergdall-heid,hubner-heid}. To simplify notation, we will write $\Spa A$ for $\Spa(A,A^\circ)$, and more often than not we will consider affinoid adic spaces with $A^+ = A^\circ$. Moreover, we will assume that all Huber rings $A$ that we encounter are complete unless otherwise stated.

Roughly following \cite{hu-etale}, we will call a Huber ring Noetherian if it is either 
\begin{enumerate}
\item Finitely generated (as a ring) over a Noetherian ring of definition, or 

\item A strongly Noetherian Tate ring.
\end{enumerate}

When writing ``strongly Noetherian'', we will always implicitly assume that the Huber ring is Tate. We say that an adic space $X$ is locally Noetherian if it has an affinoid open cover $U_i=\Spa(A_i,A_i^+)$, where the $A_i$ are either all of type (1) above, or all of type (2) above. Throughout the text, a rigid space over a nonarchimedean field $K$ will be an adic space which is locally of the form $\Spa A$, with $A$ topologically of finite type over $K$. In general, when discussing an adic space $X$ of a certain type, all affinoid open subsets $U\sub X$ that we consider will be of the same type as $X$ (for example, when considering a rigid space $X$, an affinoid open subset $U\sub X$ will always mean an affinoid rigid space).  

Recall that a Huber pair $(A,A^+)$ is called \emph{sheafy} if the structure presheaf $\oo_X$ on the topological space $X=\Spa(A,A^+)$ is a sheaf. We call a sheafy $(A,A^+)$ \emph{acyclic} if the higher \v{C}ech cohomology of $\oo_X$ vanishes for all finite covers of $X$ by rational subsets\footnote{As far as the author is aware, all known sheafy $(A,A^+)$ are acyclic, and sheafiness and acyclicity are usually proved simultaneously. If $A$ is analytic (meaning that $A$ itself is the only open ideal in $A$; this includes Tate rings), then it is known that sheafiness implies acyclicity, and sheafiness of $(A,A^+)$ only depends on $A$; see \cite[Theorem 1.3.4, Remark 1.6.9]{kedlaya-aws}.}.

Given an adic space $X$, we will sometimes denote its underlying topological space by $|X|$, and sometimes we will simply write $X$ when we hope that no confusion can arise (the observant reader will note that we have already used $\Spa(A,A^+)$ to denote an adic space as well as the underlying topological space).

Two final pieces of notation: If $X$ is a topological space and $S \sub X$ is a subset, the closure of $S$ is denoted by $\ol{S}$, and if $K$ is a nonarchimedean field, then we will write $\pi$ for a pseudouniformizer (i.e. a topologically nilpotent unit) and $k$ for the residue field.

\subsection*{Acknowledgments}
I wish to thank Eugen Hellmann, Judith Ludwig, Sujatha Ramdorai, and Otmar Venjakob for organizing the Spring School and for the invitation to give this lecture series. I also wish to thank John Bergdall, Eugen Hellmann, Ben Heuer, Katharina H\"ubner, Judith Ludwig and James Newton for stimulating discussion and feedback before and during the Spring School, and Rustam Steingart for his work as a course assistant.  Thanks are due to the the referee and to Jacksyn Bakeberg for helpful comments and pointing out errors in a previous version of these notes. A special thanks goes to Alex Youcis for numerous insightful comments, which greatly improved the notes.

Last, but certainly not least, I wish to thank all participants of the Spring School for their questions, comments, corrections and energy during the Spring School.

The author was supported by Vetenskapsr\r{a}det Grant 2020-05016, \emph{Geometric structures in the $p$-adic Langlands program}, during the preparation of these notes.

\newpage

\section{Lecture 1: Coherent sheaves}

\subsection{Brief recollections on sheaf theory}

The goal of this lecture is to define coherent sheaves on a locally Noetherian adic space, and prove that the definition is robust. For sheaf theory on an adic space $X$, we always work with abelian sheaves on the underlying topological space. A sheaf is determined by its values on any basis of the topology, and it is often technically convenient to restrict one's attention to a suitable basis; we will do so without commenting further. In particular, on an affinoid adic space, we will describe sheaves by their values on rational subsets.

Just as when discussing sheafiness of the structure presheaf, it is convenient to include cohomology straight away in the discussion of coherent sheaves. For adic spaces, \v{C}ech cohomology is a basic tool in the beginning of the theory, and derived functor cohomology comes into the picture once one has established enough information about \v{C}ech cohomology. We assume that the reader is familiar with basic notions of \v{C}ech cohomology (if not, many sources define the basics, for example \cite[Chapter III]{hartshorne}). Given a presheaf $\mc{F}$ on a topological space $X$ and an open cover $\mf{U} = (U_i)_{i\in I}$ of $X$, we let 
\[
\C^\bu(\mf{U},\mc{F}) \,\,\,\, \text{and} \,\,\,\, \C_{aug}^\bu(\mf{U},\mc{F})
\]
denote the \v{C}ech complex and the augmented \v{C}ech complex of $\mc{F}$ with respect to $\mf{U}$, respectively. Our convention is that $\C^\bu(\mf{U},\mc{F})$ has terms in (cohomological) degrees $\geq 0$, and that $\C_{aug}^\bu(\mf{U},\mc{F})$ additionally has the term $\mc{F}(X)$ in degree $-1$. We denote the \v{C}ech cohomology of $\mc{F}$ with respect to $\mf{U}$ by $\cH^i(\mf{U},\mc{F})$. Taking the direct limit over all coverings $\mf{U}$ of $X$, we obtain the (cover-independent) \v{C}ech cohomology $\cH^i(X,\mc{F})$ of $\mc{F}$.

For our needs, the relationship between \v{C}ech cohomology and sheaf cohomology is summarized in the following two results. We sketch the proof of these results from first principles (using basic homological algebra) in Appendix \ref{appendix on coh}; see Corollaries \ref{Cartan} and \ref{Leray}.

\begin{theo}[Cartan]\label{Cartan main}
Let $\mc{F}$ be a sheaf on $X$, and $\mf{B}$ be a basis of open sets of $X$ which is closed under intersection. Assume that $\cH^i(U,\mc{F}) =0$ for any open $U \in \mf{B}$ and any $i\geq 1$. Then the natural map $\cH^i(\mf{U},\mc{F}) \to H^i(X,\mc{F})$ is an isomorphism for any open cover $\mf{U} \sub \mf{B}$ and any $i \geq 0$. 
\end{theo}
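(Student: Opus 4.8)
The plan is to deduce the statement from the comparison between \v{C}ech cohomology and derived functor cohomology encoded in the two spectral sequences of a suitable double complex, the crux being to upgrade the hypothesis on the \v{C}ech cohomology of elements of $\mf{B}$ to a vanishing of their \emph{derived functor} cohomology. Concretely, I would first fix an injective resolution $\mc{F} \ra \mc{I}^\bu$ and, for any open $U$ and any cover $\mf{V}$ of $U$, form the double complex $\C^p(\mf{V}, \mc{I}^q)$. Computing the cohomology of its total complex in the two standard ways gives, on the one hand (using that each $\mc{I}^q$ is injective, hence flasque, so its augmented \v{C}ech complex is exact), the derived functor cohomology $H^\bu(U, \mc{F})$, and on the other hand the \v{C}ech-to-derived spectral sequence
\[
\cH^p(\mf{V}, \ul{H}^q(\mc{F})) \Rightarrow H^{p+q}(U, \mc{F}),
\]
where $\ul{H}^q(\mc{F})$ denotes the presheaf $W \mapsto H^q(W, \mc{F})$. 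Passing to the colimit over all covers $\mf{V}$ of $U$ yields the cover-independent version, with $E_2$-page $\cH^p(U, \ul{H}^q(\mc{F}))$.

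The heart of the argument is the claim that, under the hypotheses, $H^i(U, \mc{F}) = 0$ for every $U \in \mf{B}$ and every $i \geq 1$; this is where the main obstacle lies, and I would prove it by induction on $i$. The crucial input is that $\ul{H}^q(\mc{F})$ has vanishing sheafification for $q \geq 1$: sheafification is exact and carries the complex of presheaves $W \mapsto \mc{I}^\bu(W)$ to the exact complex of sheaves $\mc{I}^\bu$, hence carries $\ul{H}^q(\mc{F})$ to the degree-$q$ cohomology sheaf of a resolution of $\mc{F}$, which is zero. Since $\cH^0(U, -)$ computes the separated presheaf produced by the first step of sheafification, and a separated presheaf with vanishing sheafification is itself zero, we conclude $\cH^0(U, \ul{H}^q(\mc{F})) = 0$ for all $q \geq 1$ and all $U$. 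Now fix $i \geq 1$ and assume $H^q(W, \mc{F}) = 0$ for all $W \in \mf{B}$ and $1 \leq q \leq i-1$ (vacuous when $i=1$). Computing $\cH^p(U, \ul{H}^q(\mc{F}))$ through covers $\mf{V} \sub \mf{B}$ (cofinal, as $\mf{B}$ is a basis), the terms of the relevant \v{C}ech complexes are values of $\ul{H}^q(\mc{F})$ on finite intersections of elements of $\mf{B}$, which lie in $\mf{B}$ since it is closed under intersection; these vanish for $1 \leq q \leq i-1$ by the inductive hypothesis. Hence every $E_2$-term of total degree $i$ vanishes: the $q=0$ term $\cH^i(U, \mc{F})$ by hypothesis, the terms $1 \leq q \leq i-1$ by induction, and the term $\cH^0(U, \ul{H}^i(\mc{F}))$ by the sheafification argument. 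As $H^i(U,\mc{F})$ is built from subquotients of these, it vanishes, completing the induction.

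With the claim established, I would finish by applying the fixed-cover spectral sequence to $X$ and the cover $\mf{U} \sub \mf{B}$. All finite intersections $U_{i_0 \cdots i_p}$ lie in $\mf{B}$, so the claim gives $\ul{H}^q(\mc{F})(U_{i_0 \cdots i_p}) = 0$ for $q \geq 1$; therefore only the row $q=0$ survives, namely the \v{C}ech complex of $\mc{F}$, and the spectral sequence degenerates to an isomorphism $\cH^p(\mf{U}, \mc{F}) \cong H^p(X, \mc{F})$ for all $p \geq 0$ (the case $p=0$ being the sheaf property). The one remaining bookkeeping step is to verify that this isomorphism is the edge map of the spectral sequence and so coincides with the natural comparison map in the statement; this is routine but should be recorded to obtain the precise assertion. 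As noted, the genuine difficulty is entirely contained in the inductive claim, and specifically in the passage from \v{C}ech vanishing to derived vanishing on $\mf{B}$, which the computation of the sheafification of $\ul{H}^q(\mc{F})$ is designed to supply.
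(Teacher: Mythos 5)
Your proposal is correct and takes essentially the same route as the paper: both hinge on the \v{C}ech-to-derived-functor spectral sequences (fixed-cover and cover-independent), the vanishing of the sheafification of the cohomology presheaves $\ul{H}^q(\mc{F})$ (the paper's $R^q\iota(\mc{F})$) to kill the $p=0$ terms, and an induction that bootstraps the assumed \v{C}ech vanishing on $\mf{B}$ to derived-functor vanishing on $\mf{B}$, concluding with the Leray degeneration for the fixed cover $\mf{U}$. The only difference is in the construction of the spectral sequence, and it is cosmetic: you build it from the double complex $\C^p(\mf{V},\mc{I}^q)$ of an injective resolution (using flasqueness of injective sheaves), whereas the paper obtains it as the Grothendieck spectral sequence for the composition $\Sh(X) \overset{\iota}{\longrightarrow} \PSh(X) \overset{\cH^0}{\longrightarrow} \Ab$, after first showing that \v{C}ech cohomology is the derived functor of $\cH^0$ on presheaves.
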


\begin{theo}[Leray's Theorem]\label{Leray main}
Let $\mf{U}=(U_i)_{i\in I}$ be an open cover of $X$ and let $\mc{F}$ be a sheaf on $X$. Assume that $H^j(U,\mc{F})=0$ for any finite intersection $U$ of sets in $\mf{U}$ and any $j\geq 1$. Then the natural map $\cH^j(\mf{U},\mc{F}) \to H^j(X,\mc{F})$ is an isomorphism for all $j\geq 0$.
\end{theo}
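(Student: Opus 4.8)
The plan is to deduce both sides from the cohomology of a single total complex and compare. Fix an injective resolution $\mc{F} \ra \mc{I}^\bu$ in the category of abelian sheaves on $X$ (which has enough injectives) and form the Čech--resolution double complex
\[
K^{p,q} = \C^p(\mf{U}, \mc{I}^q),
\]
with horizontal differential the Čech differential and vertical differential induced by the resolution. I will then compute the cohomology of the total complex $\mathrm{Tot}^\bu(K)$ in the two standard ways, i.e. via the two spectral sequences of the double complex (equivalently, by an explicit diagram chase on $\mathrm{Tot}^\bu(K)$, staying within ``basic homological algebra'').

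First I would take vertical (resolution) cohomology. For each multi-index the column $K^{p,\bu}$ is $\mc{I}^\bu$ evaluated on a finite intersection $U_{i_0\cdots i_p}$ of members of $\mf{U}$, so its cohomology is the sheaf cohomology $H^q(U_{i_0\cdots i_p}, \mc{F})$. These are exactly the groups the hypothesis forces to vanish for $q \geq 1$, so only the row $q=0$ survives, and it is precisely $\C^\bu(\mf{U}, \mc{F})$ since $H^0(U,\mc{F}) = \mc{F}(U)$. Hence this spectral sequence degenerates and yields $H^n(\mathrm{Tot}^\bu(K)) \cong \cH^n(\mf{U}, \mc{F})$.

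Next I would take horizontal (Čech) cohomology first. The crucial input is that injective abelian sheaves are flasque and that flasque sheaves have vanishing higher Čech cohomology for any cover: $\cH^p(\mf{U}, \mc{I}^q) = 0$ for $p \geq 1$, while $\cH^0(\mf{U}, \mc{I}^q) = \mc{I}^q(X)$ by the sheaf axiom. So only the column $p=0$ survives, equal to $\G(X, \mc{I}^\bu)$, whose cohomology is $H^q(X,\mc{F})$ by definition of derived functor cohomology. This spectral sequence therefore also degenerates, giving $H^n(\mathrm{Tot}^\bu(K)) \cong H^n(X,\mc{F})$. Combining the two computations produces an isomorphism $\cH^n(\mf{U},\mc{F}) \cong H^n(X,\mc{F})$, and tracking the augmentation $\C_{aug}^\bu(\mf{U},\mc{F})$ together with the edge morphisms shows it is the natural comparison map.

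The main obstacle is the flasque Čech-vanishing lemma, which is exactly what makes the second spectral sequence collapse. I would prove it by dimension shifting: embed a flasque $\mc{F}$ into the Godement sheaf $\mc{J} = \prod_{x\in X}(i_x)_*\mc{F}_x$, which is flasque and has vanishing higher Čech cohomology (its Čech complex splits as a product, over the points $x$, of the simplicial cochain complex of the full simplex on $\{i : x\in U_i\}$, each acyclic in positive degrees). The quotient $\mc{J}/\mc{F}$ is again flasque, and the long exact Čech sequence --- available because a short exact sequence with flasque kernel remains exact after taking sections over every open set --- then lets one induct on $p$. A secondary, purely bookkeeping point is confirming that the isomorphism produced by comparing the two spectral sequences coincides with the prescribed natural map rather than some other isomorphism; this follows from naturality of the edge maps and the compatibility of the augmentation $\mc{F}(X) \ra \C^0(\mf{U},\mc{F})$ with the two filtrations.
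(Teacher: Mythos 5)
Your proof is correct, but it follows a genuinely different route from the paper's. The paper proves this statement (as Corollary \ref{Leray}) by passing through the presheaf category: it first establishes that the $\cH^i(\mf{U},-)$ are the right derived functors of $\cH^0(\mf{U},-)$ on $\PSh(X)$ (Proposition \ref{cech is a derived functor}, whose key input is \v{C}ech-acyclicity of injective \emph{presheaves}), identifies $R^j\iota(\mc{F})$ with the presheaf $U \mapsto H^j(U,\mc{F})$ (Exercise \ref{derived functors of inclusion}), and then applies the Grothendieck spectral sequence to the composite $\Sh(X) \to \PSh(X) \to \Ab$ to obtain $E_2^{ij} = \cH^i(\mf{U},R^j\iota(\mc{F})) \Rightarrow H^{i+j}(X,\mc{F})$ (Theorem \ref{cech to derived}); your hypothesis kills the \v{C}ech complexes of $R^j\iota(\mc{F})$ termwise for $j\geq 1$, forcing degeneration. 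You instead compare the two spectral sequences of the Godement-style double complex $K^{p,q}=\C^p(\mf{U},\mc{I}^q)$, never leaving the category of sheaves, and your collapse rests on a different key lemma: flasque (hence injective) sheaves have vanishing higher \v{C}ech cohomology for \emph{every} cover, which you prove by Godement embedding and dimension shifting. Both arguments are sound. Yours is more self-contained: it avoids presheaf categories and the general Grothendieck spectral sequence, and since both of your spectral sequences collapse onto a single row or column, the comparison can even be done by elementary zig-zag arguments; the flasque vanishing lemma is also independently useful. What the paper's setup buys is reuse: the same presheaf-level spectral sequence yields the low-degree isomorphism $\cH^1(X,\mc{F}) \cong H^1(X,\mc{F})$ (Corollary \ref{low degrees}) and, crucially for the adic-space applications, Cartan's theorem (Corollary \ref{Cartan}), whose inductive proof needs the spectral sequence with coefficients in the presheaves $R^q\iota(\mc{F})$ and does not follow formally from the bare statement of Leray. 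Two details worth making explicit in a full write-up: identifying the vertical cohomology of each column with $\prod H^q(U_{i_0\cdots i_p},\mc{F})$ uses exactness of arbitrary products in $\Ab$ together with the fact that $\mc{I}^\bu$ restricted to any open still computes sheaf cohomology there (restriction preserves injectives, or: injectives are flasque and flasque sheaves are acyclic on every open); and the identification of your isomorphism with the natural comparison map follows, as you indicate, from the two augmentations $\C^\bu(\mf{U},\mc{F}) \to \mathrm{Tot}^\bu(K)$ and $\Gamma(X,\mc{I}^\bu) \to \mathrm{Tot}^\bu(K)$ both being quasi-isomorphisms.
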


\subsection{Sheaves associated with modules} We start our discussion with a sheafy and acyclic Huber pair $(A,A^+)$. In all cases we consider, we recall that what has been proved is that the augmented \v{C}ech complex $\C^\bu_{aug}(\mf{U},\oo_X)$ is acyclic for the structure presheaf $\oo_X$ and all finite covers of $X=\Spa(A,A^+)$ by rational subsets. This both implies that $\oo_X$ is a sheaf and, by Theorem \ref{Cartan main}, that $H^i(X,\oo_X)=0$ for all $i\geq 1$. A key ingredient in setting up the theory of quasicoherent sheaves in algebraic geometry is flatness of localization. By contrast, if $U\sub X$ is a rational subset (with $X$ as above), the restriction map $A \to \oo_X(U)$ need not be flat. This is a serious problem in trying to set up a theory of quasicoherent sheaves for adic spaces, though not the only one. We recall the following positive result:

\begin{theo}
Suppose that $(A,A^+)$ is a Noetherian Huber pair and that $U\sub X=\Spa(A,A^+)$ is a rational subset. Then $A \to \oo_X(U)$ is flat.
\end{theo}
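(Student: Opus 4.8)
The plan is to realise $\oo_X(U)$ as an explicit quotient of a restricted power series ring over $A$ and to prove flatness in stages. Recall that if $U$ is the rational subset cut out by elements $f_1,\dots,f_n,g \in A$ generating an open ideal (so $U = \{x : |f_i(x)| \leq |g(x)| \neq 0\}$), then there is a canonical isomorphism
\[
\oo_X(U) \cong A\langle X_1,\dots,X_n\rangle/\ol{(gX_1-f_1,\dots,gX_n-f_n)},
\]
where $A\langle X_1,\dots,X_n\rangle$ is the restricted power series ring and $X_i \mapsto f_i/g$. Because $(A,A^+)$ is Noetherian, $A\langle X_1,\dots,X_n\rangle$ is a Noetherian ring, so the finitely generated ideal $(gX_i-f_i)_i$ is automatically closed; the closure is thus superfluous and $\oo_X(U)$ is an honest finite quotient. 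Since $A$ is Noetherian, to prove flatness it suffices to verify $\Tor_1^A(\oo_X(U),M)=0$ for every finitely generated $A$-module $M$. The argument then splits into two parts: flatness of $A \ra A\langle X_1,\dots,X_n\rangle$, and preservation of flatness under the quotient.

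First I would prove that $A \ra A\langle X_1,\dots,X_n\rangle$ is flat by descending to a ring of definition. In case (1), $A$ is itself Noetherian and $I$-adic for a finitely generated ideal of definition $I$, and $A\langle X_1,\dots,X_n\rangle$ is the $I$-adic completion $\widehat{A[X_1,\dots,X_n]}$; since $A[X_1,\dots,X_n]$ is free over $A$ and the $I$-adic completion of a Noetherian ring is flat over it, the composite is flat. In the strongly Noetherian Tate case (2) one argues over a Noetherian ring of definition $A_0$ with $I=(\pi)$: the ring $A_0\langle X_1,\dots,X_n\rangle$ is flat over $A_0$ by the same two facts, and then $A\langle X_1,\dots,X_n\rangle = A_0\langle X_1,\dots,X_n\rangle[1/\pi]$ is flat over $A = A_0[1/\pi]$ by base change along $A_0 \ra A_0[1/\pi]$. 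This step is robust and uses only standard commutative algebra of Noetherian completions.

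The main obstacle is the passage to the quotient by $(gX_i-f_i)$. These elements need not form a regular sequence in $A\langle X_1,\dots,X_n\rangle$ — indeed $g$ and the $f_i$ may be zero-divisors in $A$ — so the Koszul complex on $(gX_i-f_i)$ need not resolve $\oo_X(U)$, and $A$-flatness of the quotient cannot be read off formally. The crucial extra input is that $g$ becomes a unit in $\oo_X(U)$: openness of the ideal $(f_1,\dots,f_n,g)$ forces $|g|$ to be bounded below on $U$, so that $g^{-1}\in\oo_X(U)$ and the structure map factors as $A \ra A[1/g] \ra \oo_X(U)$ with the first arrow a flat localization. After inverting $g$ the relations $gX_i - f_i$ acquire the form $X_i - h_i$ with $h_i := f_i/g$ power-bounded, so the plan is to present $\oo_X(U)$ as the ring obtained from $A[1/g]$ by adjoining the power-bounded elements $h_i$ and completing, and to prove flatness one variable at a time. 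The model computation is that adjoining a single power-bounded element $h$ is flat: the generator $X-h$ has unit leading coefficient, so multiplication by $X-h$ is injective on the module of convergent power series with coefficients in an arbitrary module $M$ — a short telescoping argument using that the coefficients tend to $0$ and $h$ is power-bounded — whence the two-term Koszul complex stays exact after tensoring and the higher $\Tor$ vanish. Iterating over $X_1,\dots,X_n$ should then give that $\oo_X(U)$ is flat over $A[1/g]$, and hence over $A$.

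The technical heart, and the step I expect to require the most care, is making this re-presentation rigorous: one must fix the correct topology on the localization $A[1/g]$ and its completion, and — in order to run the injectivity (telescoping) argument integrally rather than only after inverting $g$ — identify $A\langle X_1,\dots,X_n\rangle \otimes_A M$ with the completed power series module $M\ctens_A A\langle X_1,\dots,X_n\rangle$ for finitely generated $M$. This identification is where Noetherianity re-enters, via the Artin--Rees lemma: finitely generated submodules of the relevant Noetherian completed modules are closed, so the algebraic and completed tensor products agree and the exactness of the Koszul complex is preserved after $\otimes_A M$. Granting this, the vanishing of $\Tor_1^A(\oo_X(U),M)$ follows, and the same argument applies in both cases (1) and (2), with the $I$-adic completion playing the role of the Tate completion in case (1).
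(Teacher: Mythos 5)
Your opening move for case (1) is correct, and it is exactly the argument the paper itself intends: the paper gives no proof of this theorem beyond citing \cite[Lemma 1.7.6]{hu-etale} and \cite[Theorem 1.4.14]{kedlaya-aws}, plus a footnote outlining precisely your completion-flatness step. In fact, in case (1) that step already finishes everything: the uncompleted rational localization $A[1/g]$, topologized via the ring of definition $A_0[f_1/g,\dots,f_n/g]$, is again finitely generated over a Noetherian ring of definition, so $A \to A[1/g] \to \oo_X(U) = \wh{A[1/g]}$ is a composite of a flat localization and a flat completion (using $\wh{B}=\wh{B}_0\otimes_{B_0}B$), and no Koszul argument is needed. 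The genuine gap is in case (2). You argue ``over a Noetherian ring of definition $A_0$'', but a strongly Noetherian Tate ring need not have one: any Tate ring possessing a Noetherian ring of definition $A_0$ equals $A_0[1/\pi]$ and is therefore already of type (1), whereas basic strongly Noetherian examples such as $\mathbb{C}_p$ or $\mathbb{C}_p\langle T\rangle$ have \emph{no} Noetherian ring of definition (any ring of definition lies in the unit ball, and for small $\delta$ its ideal $\{a : |a|<\delta\}$ is not finitely generated because the value group is dense). So your case (2) silently reduces to case (1) and proves nothing new, while the real content of the theorem --- flatness of $A \to A\langle X_1,\dots,X_n\rangle$, closedness of the ideal $(gX_i-f_i)$, and the identification $A\langle X\rangle\otimes_A M \cong M\langle X\rangle$ for finitely generated $M$ --- is exactly what is hard in the Tate case. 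None of it follows from Artin--Rees, because there is no Noetherian adic subring over which to run Artin--Rees; the known proofs (Huber, Kedlaya--Liu) instead rest on the open mapping theorem and on Huber's theorem that finitely generated submodules are closed. For the same reason, ``$A\langle X\rangle$ is Noetherian, so the finitely generated ideal is automatically closed'' is not a valid inference: closedness of ideals is a nontrivial theorem under the strong Noetherian hypothesis, and is not known for merely Noetherian Tate rings.

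There is also a local failure in the localize-then-telescope step as stated. Your telescoping lemma is correct for power series over a \emph{complete} module with $h$ power-bounded, but after inverting $g$ algebraically the modules in play, e.g.\ $M\langle X_1,\dots,X_n\rangle[1/g]$, are no longer complete, and $h_i=f_i/g$ is not power-bounded for any topology available at that stage. Concretely, a relation $(X_1-h_1)\xi=0$ in $M\langle X\rangle[1/g]$ only says $g^N(gX_1-f_1)\eta=0$ in $M\langle X\rangle$ for some $N\geq 0$ (writing $\xi=\eta/g^k$), and telescoping then yields $g^{k(N+1)}\eta_j = g^{kN}f_1^k\eta_{j+k}$ for all $k$: the exponent of $g$ grows with $k$, the right-hand side has no reason to converge, and no single power of $g$ kills $\eta_j$, so the argument does not close. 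The repair is to complete $A[1/g]$ in the rational-localization topology (ring of definition $A_0[f_1/g,\dots,f_n/g]$), where the $h_i$ genuinely are power-bounded --- but then the change of variables $Y_i = X_i - h_i$ turns your relations into coordinates, the Koszul/telescoping step becomes vacuous, and the entire burden falls back on the foundational facts above, which the proposal does not supply in case (2).
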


A reference\footnote{The reader may take the case when $A$ has a Noetherian ring of definition as an exercise, using flatness of $I$-adic completion for Noetherian rings \cite[Tag 00MB]{stacks-project}. \emph{Hint: If $A$ is a (not necessarily complete) Huber ring with ring of definition $A_0$, then the completion $\wh{A}$ is equal to $\wh{A}_0 \otimes_{A_0} A$.}} is \cite[Lemma 1.7.6]{hu-etale}; see also \cite[Theorem 1.4.14]{kedlaya-aws}. In particular, we should have a better chance of succeeding when $X$ is locally Noetherian. A further advantage in this situation is the following remark on canonical topologies (see e.g \cite[Lemma 2.3]{hu-generalization} and the discussion preceding it).

\begin{prop}
Assume that $(A,A^+)$ is a Noetherian Huber pair and that $M$ is a finitely generated $A$-module. If $A^n \to M$ is a surjection, we equip $M$ with the quotient topology coming from this surjection, where $A^n$ has the product topology. This topology is called the canonical topology and is complete and independent of the choice of surjection from a finitely generated free module. Moreover, any $A$-module map $M \to N$ between finitely generated $A$-modules is continuous and strict\footnote{Recall $f : M \to N$ is strict if the subspace topology on $\mathrm{Im}(f)$ coming from the inclusion into $N$ coindices with the quotient topology on $\mathrm{Im}(f)$ coming from the surjection from $M$.} for the canonical topology.
\end{prop}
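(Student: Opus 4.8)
The plan is to handle the assertions in increasing order of difficulty: first dispose of independence and continuity, then reduce completeness and strictness to a single closedness statement, which is the real crux. Throughout I would use that $A$ is Noetherian as an abstract ring (in case (1) by the Hilbert basis theorem applied to the ring of definition $A_0$, and in case (2) by taking the trivial number of variables in the strong Noetherian hypothesis), so that every finitely generated module is finitely presented and every submodule of a finitely generated module is again finitely generated. The two elementary inputs are: (i) any $A$-linear map $A^n \to A^m$ is continuous, since in coordinates it is given by a matrix and addition and multiplication in $A$ are continuous; and (ii) finite free modules are projective, so any map out of $A^n$ lifts through a surjection onto its target.

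For independence, given two surjections $p\colon A^n \to M$ and $q\colon A^m \to M$, I would lift $\mathrm{id}_M$ in both directions to continuous $A$-linear maps $\alpha\colon A^n \to A^m$ and $\beta\colon A^m \to A^n$ with $q\alpha=p$ and $p\beta=q$. If $U\subseteq M$ satisfies $q^{-1}(U)$ open, then $p^{-1}(U)=\alpha^{-1}(q^{-1}(U))$ is open by continuity of $\alpha$, so the $q$-quotient topology is coarser than the $p$-quotient topology; the symmetric argument using $\beta$ gives equality. Continuity of an arbitrary $f\colon M\to N$ is the same device: choosing presentations $p\colon A^n\to M$, $q\colon A^m\to N$, lift $f\circ p$ to a continuous $\tilde f\colon A^n\to A^m$ and transport open sets along $q\tilde f = fp$. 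The remaining claims I would reduce to the single statement
\[
(\star)\qquad \text{every finitely generated } K\subseteq A^n \text{ is closed, with subspace topology equal to its canonical topology.}
\]

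Granting $(\star)$, both remaining claims follow formally. First, $A$ is metrizable, since the subgroups $I^n$ (case (1)) or $\pi^n A_0$ (case (2)) form a countable fundamental system of neighborhoods of $0$; hence $A^n$ is complete and metrizable. For completeness of $M$, apply $(\star)$ to $K=\ker(A^n\to M)$: a quotient of a complete metrizable topological abelian group by a closed subgroup is again complete. For strictness of $f\colon M\to N$, factor $f$ as $M\twoheadrightarrow \Ima(f)\hookrightarrow N$; by transitivity of quotient maps together with independence, the quotient topology on $\Ima(f)$ induced from $M$ is its canonical topology, while by $(\star)$ the subspace topology on $\Ima(f)$ induced from $N$ is also its canonical topology, and the coincidence of these two is exactly strictness.

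The main obstacle is $(\star)$, and the genuine difficulty is that closedness of $K$, completeness of the quotient, and the identification of the subspace topology are naturally intertwined, each most easily proved using the others. I would break this circularity by establishing completeness (equivalently, closedness of finitely generated submodules) by an intrinsic argument before invoking any open mapping statement. In case (1) this is the Artin--Rees lemma together with $I$-adic completeness of finitely generated modules over the complete Noetherian base, which simultaneously shows $K$ is closed and computes its subspace topology. In case (2) one fixes a norm presenting $A$ as a Banach ring, upgrades each finitely generated module to a Banach module once completeness is in hand, and then reads off closedness and $(\star)$ from the non-archimedean open mapping theorem (a continuous surjection of Banach modules is open, by Baire category) applied to $A^m\twoheadrightarrow K$ and $A^n\twoheadrightarrow \Ima(f)$. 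Carrying out this bootstrapping carefully in each of the two cases is the technical heart of the proposition, and for the full details I would defer to the cited literature.
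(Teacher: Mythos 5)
The paper itself does not prove this proposition: it is recalled from the literature, with a pointer to \cite[Lemma 2.3]{hu-generalization} and the discussion preceding it, so your proposal has to be judged on its own terms rather than against an in-paper argument. Its formal skeleton is correct: independence and continuity via lifting through the projective modules $A^n$, the reduction of completeness to closedness of the kernel (a quotient of a complete metrizable topological group by a closed subgroup is complete), and the reduction of strictness to $(\star)$ are all sound. Two glosses are worth recording. First, $(\star)$ as stated concerns submodules of $A^n$, whereas strictness needs the subspace topology on $\Ima(f)$ inside a general finitely generated $N$; this does follow, but only after pulling back along the open quotient map $A^m \twoheadrightarrow N$ and applying $(\star)$ to $q^{-1}(\Ima(f)) \subseteq A^m$. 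Second, in case (1) the topology of $A$ is \emph{not} the $IA$-adic topology (the neighbourhoods of $0$ are the ideals $I^m$ of $A_0$), so your Artin--Rees argument must be run over $A_0$, applied to the $A_0$-lattices spanned by finite generating sets, together with a boundedness argument comparing any two such lattices; with that bookkeeping the ingredients you name (Artin--Rees over $A_0$, and completeness --- hence Hausdorffness, by Krull intersection --- of finitely generated modules over the complete Noetherian ring $A_0$) do carry case (1).

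The genuine gap is case (2). You correctly diagnose the circularity and promise to break it by establishing completeness ``by an intrinsic argument before invoking any open mapping statement,'' but for case (2) no such argument is given: ``upgrades each finitely generated module to a Banach module once completeness is in hand'' assumes exactly what must be proved. The open mapping theorem is structurally incapable of filling this hole: if $K \subseteq A^n$ is already known to be closed (hence complete in the subspace topology), then OMT applied to $A^m \twoheadrightarrow K$ shows the subspace topology is the canonical one; but OMT cannot produce the closedness itself. Closedness of all finitely generated submodules is equivalent to Hausdorffness of the canonical topology on all finitely generated modules (consider $A^n/K$), and that is where the content lies. Since $\pi$ is a unit of $A$, Krull/Nakayama-type arguments are unavailable ($\bigcap_k \pi^k M_0 = \pi \bigcap_k \pi^k M_0$ holds trivially and says nothing), and, unlike case (1), you cannot run Artin--Rees over a ring of definition, because in case (2) rings of definition need not be Noetherian and in general no Noetherian one exists --- e.g.\ $\mathcal{O}_{\mathbb{C}_p}\langle T\rangle \subseteq \mathbb{C}_p\langle T\rangle$ is case (2) but not case (1), which is precisely why the strongly Noetherian condition appears in the theory at all. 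The missing input is a theorem of a different nature: for affinoid algebras it is the classical fact that all ideals are closed (proved via Noether normalization and Weierstrass theory), and for general strongly Noetherian Tate rings it is exactly what Huber's lemma supplies. As written, your case (2) would only close up by importing that closedness statement from the literature, at which point the open mapping step becomes the easy half.
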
 

We start our discussion of quasicoherent sheaves by imitating the construction in algebraic geometry.

\begin{prop}\label{quasicoherent sheaves on affines}
Let $(A,A^+)$ be a Noetherian Huber pair and let $M$ be an $A$-module. Then the presheaf $\wt{M}(U) = M \otimes_A \oo_X(U)$ on $X=\Spa(A,A^+)$ is a sheaf, and $H^i(X,\wt{M})=0$ for all $i\geq 1$. Moreover, the assignment $M \mapsto \wt{M}$ defines an exact functor from $A$-modules to $\oo_X$-modules.
\end{prop}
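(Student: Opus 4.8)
The plan is to reduce all three assertions to a single homological observation about the augmented \v{C}ech complexes of $\oo_X$, exploiting that in the Noetherian case each restriction $A \to \oo_X(U)$ is flat. First I would fix a rational subset $V\sub X$ and a \emph{finite} cover $\mf{U}=(U_i)_{i\in I}$ of $V$ by rational subsets. Since finite intersections of rational subsets are rational, every term of $\C^\bu_{aug}(\mf{U},\wt{M})$ is a finite direct sum of modules of the form $M\otimes_A \oo_X(U_{i_0}\cap\cdots\cap U_{i_k})$; because $M\otimes_A -$ commutes with finite direct sums, this identifies
\[
\C^\bu_{aug}(\mf{U},\wt{M}) \cong M\otimes_A \C^\bu_{aug}(\mf{U},\oo_X).
\]
By acyclicity of $(A,A^+)$ the complex $\C^\bu_{aug}(\mf{U},\oo_X)$ is exact, and by the flatness theorem for rational localizations (valid since $A$ is Noetherian) each of its terms is a flat $A$-module.

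The heart of the argument is then a standard lemma: a bounded exact complex of flat $A$-modules stays exact after applying $M\otimes_A -$ for an arbitrary $A$-module $M$. I would prove this by writing $Z^k$ for the cycles and inducting downward from the top degree via the long exact $\Tor$-sequence, using that in a short exact sequence with two flat terms the third is flat; this shows every $Z^k$ is flat, so each $0\to Z^k\to C^k\to Z^{k+1}\to 0$ remains exact after tensoring, and splicing recovers exactness of the whole complex. Applied to $\C^\bu_{aug}(\mf{U},\oo_X)$, this yields exactness of $\C^\bu_{aug}(\mf{U},\wt{M})$ for every finite rational cover of every rational subset $V$.

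From this I extract the first two claims. Exactness in degrees $-1$ and $0$ is precisely the sheaf axiom for $\wt{M}$ with respect to $\mf{U}$; since rational subsets form a basis closed under intersection and are quasi-compact (so finite rational covers are cofinal), this shows $\wt{M}$ is a sheaf. Exactness in the higher degrees gives $\cH^i(\mf{U},\wt{M})=0$ for $i\geq 1$, and passing to the colimit over finite rational covers shows $\cH^i(V,\wt{M})=0$ for every rational $V$. Cartan's Theorem~\ref{Cartan main}, applied with $\mf{B}$ the basis of rational subsets, then upgrades this to $H^i(X,\wt{M})=0$ for all $i\geq 1$.

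Finally, for exactness of $M\mapsto\wt{M}$, I would check exactness of a short exact sequence of associated sheaves on stalks. The stalk is $\wt{M}_x=\colim_{x\in U}M\otimes_A\oo_X(U)=M\otimes_A\oo_{X,x}$, and $\oo_{X,x}$ is a filtered colimit of the flat $A$-modules $\oo_X(U)$, hence itself flat over $A$; therefore $M\mapsto\wt{M}_x$ is exact for each $x$, and the functor is exact. I expect the only real subtlety to be the homological lemma together with its input --- the flatness of each $\oo_X(U)$ over $A$ --- since this flatness is exactly what fails in the non-Noetherian setting, and it is precisely what lets the exactness of the structure-sheaf \v{C}ech complex propagate to $\wt{M}$ for an arbitrary, possibly non-flat, module $M$.
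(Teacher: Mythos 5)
Your proof is correct, and it reaches the result by a genuinely different homological route than the paper's. Both arguments rest on the same two inputs --- acyclicity of $\C_{aug}^\bu(\mf{U},\oo_X)$ for finite rational covers and flatness of $A \to \oo_X(U)$ in the Noetherian case --- but the paper resolves the \emph{module}, while you work on the \emph{complex}. Concretely, the paper first notes that $\C_{aug}^\bu(\mf{U},\wt{F})$ is acyclic for $F$ free (a direct sum of copies of the structure-sheaf complex), then for arbitrary $N$ picks $0 \to K \to F \to N \to 0$, uses flatness to get a short exact sequence of augmented \v{C}ech complexes, and runs a descending induction on the cohomological degree, simultaneously over all modules, via the long exact sequence. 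You instead use the identification $\C_{aug}^\bu(\mf{U},\wt{M}) \cong M \otimes_A \C_{aug}^\bu(\mf{U},\oo_X)$ together with the standard lemma that a bounded exact complex of flat modules stays exact after applying $M\otimes_A-$ (flatness of the cycle modules, by downward induction and the $\Tor$ long exact sequence). These are two avatars of the same $\Tor$ computation; yours isolates a clean, reusable lemma and makes visible exactly where flatness enters (the terms of the structure-sheaf complex), while the paper's dimension shift avoids discussing cycle modules at the cost of inducting over all modules at once. Two smaller points where you differ, both fine: you explicitly run the argument over every rational $V \sub X$ and use quasicompactness to reduce to finite rational covers before invoking Cartan, steps the paper leaves implicit; and you obtain exactness of $M \mapsto \wt{M}$ from flatness of the stalks $\oo_{X,x}$ (filtered colimits of the flat modules $\oo_X(U)$), whereas the paper notes exactness already at the level of presheaves on rational subsets --- equivalent, since stalks are filtered colimits of sections.
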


\begin{proof}
Let $\mf{U}$ be any finite cover of $X$ by rational subsets; we need to prove that $\C_{aug}^\bu(\mf{U},\wt{M})$ is acyclic. First, note that if $M=A^{\oplus I}$ is a free module (for some index set $I$), then $\C_{aug}^\bu(\mf{U},\wt{M}) = \C_{aug}^\bu(\mf{U},\oo_X)^{\oplus I}$, which is acylic since $\C_{aug}^\bu(\mf{U},\oo_X)$ is acyclic and direct sums are exact.

For a fixed $\mf{U}$ we prove that $\cH^i_{aug}(\mf{U},\wt{M}) = 0$ for all $M$ and all $i\geq -1$ by descending induction on $i$. For large enough $i$ this is clear since $\mf{U}$ is a finite cover. Assume that $\cH^i_{aug}(\mf{U},\wt{M}) = 0$ for all $M$ and all $i \geq d$, for some $d\geq 0$. Let $N$ be any $A$-module and choose a surjection $F \to N$ from a free module $F$; let $K$ be the kernel. Since all maps $A \to \oo_X(U)$ are flat when $(A,A^+)$ is Noetherian, the exact sequence $0 \to K \to F \to N \to 0 $ gives an exact sequence
\[
0 \to \C^\bu_{aug}(\mf{U},\wt{K}) \to \C^\bu_{aug}(\mf{U},\wt{F}) \to \C^\bu_{aug}(\mf{U},\wt{N}) \to 0.
\]
We know that $\C^\bu_{aug}(\mf{U},\wt{F})$ is acylic since $F$ is free, so the long exact sequence for cohomology and the induction hypothesis gives us that $\cH^{d-1}_{aug}(\mf{U},\wt{N}) = \cH^d_{aug}(\mf{U},\wt{K}) = 0$. This finishes the proof that the $\wt{M}$ are acyclic sheaves. For the last part, note that $M \mapsto \wt{M}$ is even exact as a functor into presheaves (on rational subsets).
\end{proof}

\begin{exer}
For general sheafy and acyclic $(A,A^+)$, prove that $\wt{M}$ is an acyclic sheaf when $M$ is a \emph{flat} $A$-module.
\end{exer}

In light of Proposition \ref{quasicoherent sheaves on affines}, it is tempting to define a sheaf $\mc{F}$ on a locally Noetherian adic space $X$ to be quasicoherent if there is a cover $(U_i = \Spa(A_i,A_i^+))_{i\in I}$ of affinoid opens such that for all $i$ there is an $A_i$-module $M_i$ such that $\mc{F}|_{U_i} \cong \wt{M_i}$. However, this definition has some problems:

\begin{enumerate}
\item Does it depend on the choice of the cover? This boils down to the following question: If $X=\Spa(A,A^+)$ is affinoid Noetherian itself and $\mc{F}$ is quasicoherent in the sense above, is $\mc{F} \cong \wt{M}$ for some $A$-module $M$?

\item Let $U\sub X$ be a rational open subset. Since $\oo_X(U)$ is a \emph{complete topological ring} (and we have actively used completions in the definition of $\oo_X(U)$), one might reasonably demand that a quasicoherent sheaf should take values in complete topological $\oo_X(U)$-modules on $U$. However, the sheaves $\wt{M}$ above have no topology a priori, and even if we demand that $M$ is a complete topological $A$-module, $M \otimes_A \oo_X(U)$ need not be complete. One can try to get around this by only looking at complete topological $A$-modules $M$ and redefining $\wt{M}$ using a completed tensor product $M \wh{\otimes}_A \oo_X(U)$, but this runs into the problem that $-\wh{\otimes}_A \oo_X(U)$ need not be exact even if $-\otimes_A \oo_X(U)$ is.
\end{enumerate}

There have been various approaches to getting around this problem (as well as the problem that Proposition \ref{quasicoherent sheaves on affines} does not hold for many modules of interest if rational localizations fail to be flat). These include ad hoc notions of ``Banach sheaves'' on rigid analytic varieties \cite{aip-siegel} and the notion of pseudocoherent sheaves of Kedlaya--Liu \cite{kedlaya-liu-ii,kedlaya-aws}; see in particular \cite[Theorem 1.4.18]{kedlaya-aws}. More recently, Clausen and Scholze's theory of condensed mathematics has given a solution both to the problem of sheafiness in the theory of adic spaces, and the problem with quasicoherent sheaves, within their larger framework of ``analytic geometry''. The interested reader might look at \cite{scholze-condensed,scholze-analytic,clause-scholze-lectures} for an introduction to this remarkable field, and at \cite{andreychev,mikami,scholze-mathoverflow} for more details and results on sheafiness and quasicoherent sheaves on (analytic) adic spaces given by analytic geometry.

\subsection{Coherent sheaves}

The traditional way to solve the problems above is focus on \emph{finitely generated} modules, i.e. \emph{coherent sheaves}. We go back to the setting of Proposition \ref{quasicoherent sheaves on affines}. If $M$ is finitely generated over $A$, then $M \otimes_A \oo_X(U)$ is finitely generated over $\oo_X(U)$ for every $U$ and hence carries the canonical topology, making $\wt{M}$ a sheaf of complete topological $\oo_X$-modules. This solves the second problem. The first can (importantly!) also be solved, though it will take more work.

\begin{theo}\label{coherence is local}
Let $(A,A^+)$ be a Noetherian Huber pair with $X=\Spa(A,A^+)$ and let $\mc{F}$ be a sheaf of $\oo_X$-modules on $X$. Assume that there is a cover $(U_i)_{i\in I}$ of rational subsets and finitely generated $\oo_X(U_i)$-modules $M_i$ such that $\mc{F}|_{U_i} \cong \wt{M_i}$ for all $i$. Then there is a finitely generated $A$-module $M$ such that $\mc{F} \cong \wt{M}$. 
\end{theo}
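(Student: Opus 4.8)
The natural candidate is $M := \mathcal{F}(X) = H^0(X,\mathcal{F})$, together with the canonical morphism $\wt{M} \to \mathcal{F}$ whose value on a rational subset $U$ is the base-change map $M \otimes_A \oo_X(U) \to \mathcal{F}(U)$. The plan is to show (i) that $M$ is finitely generated over $A$, and (ii) that this morphism is an isomorphism; since both source and target are sheaves, (ii) can be checked on the cover $(U_i)$, i.e. it suffices to prove that $M \otimes_A \oo_X(U_i) \to M_i$ is an isomorphism. As $X$ is quasicompact I first pass to a finite subcover. I also note that the hypothesis is stable under refinement: restricting $\wt{M_i}$ to a smaller rational subset $U' \sub U_i$ yields $\wt{M_i \otimes_{\oo_X(U_i)} \oo_X(U')}$, which is again of the required form by Proposition \ref{quasicoherent sheaves on affines}.

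Next I reduce to a two-element cover. Using that a finite cover by rational subsets can be refined by a Laurent covering (a standard combinatorial fact), and that the Laurent covering attached to $f_1,\dots,f_n$ restricts on $X(f_n \leq 1)$ and on $X(f_n \geq 1)$ to the Laurent covering attached to $f_1,\dots,f_{n-1}$, an induction on $n$ reduces the statement to the following gluing problem (noting that each rational localization $(\oo_X(U),\oo_X(U)^+)$ is again a Noetherian Huber pair): $X = U \cup V$ with $U = X(f\leq 1)$, $V = X(f \geq 1)$, $W = U \cap V$, and $\mathcal{F}|_U \cong \wt{M_U}$, $\mathcal{F}|_V \cong \wt{M_V}$ for finitely generated modules $M_U$, $M_V$ equipped with a prescribed gluing isomorphism over $W$. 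Here $M = M_U \times_{M_W} M_V$ is precisely $\mathcal{F}(X)$, where $M_W := M_U \otimes_{\oo_X(U)} \oo_X(W) \cong M_V \otimes_{\oo_X(V)} \oo_X(W)$.

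The heart of the argument is Kiehl's gluing lemma for this two-element cover. The formal input is Tate acyclicity, which for the cover $\{U,V\}$ says exactly that
\[
0 \to A \to \oo_X(U) \times \oo_X(V) \to \oo_X(W) \to 0
\]
(the last map being the difference of restrictions) is exact; combined with completeness and strictness of the canonical topology, the surjectivity here comes with quantitative control, so that the additive problem $a - b = c$ is solvable with bounds. Choosing finite generating systems of $M_U$ and $M_V$, one compares them over $W$ through the gluing isomorphism; writing the resulting transition matrix as a perturbation of one factoring through $U$ and $V$ separately and correcting iteratively, the corrections become topologically nilpotent and the process converges. This produces finitely many elements of $M = \mathcal{F}(X)$ whose restrictions generate $M_U$ over $\oo_X(U)$ and $M_V$ over $\oo_X(V)$; hence $M$ is finitely generated over $A$, and the base-change maps $M \otimes_A \oo_X(U) \to M_U$ and $M \otimes_A \oo_X(V) \to M_V$ are surjective.

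Finally, injectivity of these base-change maps (hence that they are isomorphisms) should follow formally: using flatness of the rational localizations $A \to \oo_X(U)$ and $\oo_X(U) \to \oo_X(W)$ recalled above, together with the left-exact sequence $0 \to M \to M_U \times M_V \to M_W$ coming from the sheaf property, a diagram chase forces the kernels to vanish. Then $\wt{M} \to \mathcal{F}$ is an isomorphism over $U$ and over $V$, hence over $X$. I expect the main obstacle to be the approximation step in the gluing lemma: it is the one genuinely analytic ingredient, where one must turn the qualitative surjectivity furnished by Tate acyclicity into a convergent iteration using completeness and the open-mapping-type strictness of the canonical topology. The remaining steps are formal consequences of flatness and of Proposition \ref{quasicoherent sheaves on affines}.
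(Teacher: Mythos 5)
Your first half follows the paper's proof: the reduction to a simple Laurent cover, the quantitative matrix factorization obtained from the open mapping theorem and a convergent iteration (the paper's Lemma \ref{cartans lemma}), and the production of finitely many elements of $M=\mc{F}(X)$ whose restrictions generate $M_U$ and $M_V$, giving surjectivity of the base-change maps (the paper's Lemma \ref{coherence lemma 1}). But your two final claims are genuine gaps, and they are where the remaining substance of the proof lies. First, ``hence $M$ is finitely generated over $A$'' is a non sequitur: an element $(m_U,m_V)\in M$ has $m_U$ an $\oo_X(U)$-linear combination of the restrictions of your global sections and $m_V$ an $\oo_X(V)$-linear combination of them, but with unrelated coefficient systems that need not come from $A$ or be compatible, so surjectivity of $M\otimes_A\oo_X(U)\to M_U$ and $M\otimes_A\oo_X(V)\to M_V$ does not bound $M$ itself. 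Second, injectivity of the base-change maps does not ``follow formally'' from flatness and the left-exact sequence $0\to M\to M_U\times M_V\to M_W$. The scheme-theoretic diagram chase you have in mind relies on idempotency of localization, $A_f\otimes_A A_f=A_f$; for rational localizations of Huber rings the multiplication map $\oo_X(U)\otimes_A\oo_X(U)\to\oo_X(U)$ is surjective but not injective in general (only the \emph{completed} tensor product is idempotent), so tensoring the sheaf sequence with $\oo_X(U)$ over $A$ produces terms such as $M_U\otimes_A\oo_X(U)$ that cannot be identified with $M_U$, and the chase does not close up.

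The paper fills both gaps at once with one further idea that your proposal is missing: apply the surjectivity lemma a second time, to the kernel sheaf. Choose a finite free $A$-module $F$ and a map $F\to M$ such that the induced maps $F\otimes_A\oo_X(U)\to M_U$ and $F\otimes_A\oo_X(V)\to M_V$ are surjective (possible by the surjectivity you already have). Because $\oo_X(U)$ and $\oo_X(V)$ are Noetherian, the kernels $N_U$, $N_V$ of these surjections are again finitely generated, so the kernel sheaf $\mc{N}=\Ker(\wt{F}\to\mc{F})$ satisfies the hypotheses of the theorem for the same cover; Lemma \ref{coherence lemma 1} applied to $\mc{N}$ shows that $N_U\times N_V\to N_W$ is surjective, and the snake lemma applied to the map of short exact sequences from $0\to F\to F_U\times F_V\to F_W\to 0$ to $0\to M\to M_U\times M_V\to M_W\to 0$ forces $\Coker(F\to M)=0$, i.e.\ $M$ is finitely generated. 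Repeating the same step for $\mc{N}$ yields a presentation $\wt{G}\to\wt{F}\to\mc{F}\to 0$ with $G$ finite free, and exactness of $N\mapsto\wt{N}$ (Proposition \ref{quasicoherent sheaves on affines}) identifies $\mc{F}$ with the sheaf associated to $\Coker(G\to F)$, whose module of global sections is $M$. So the isomorphism $\wt{M}\cong\mc{F}$ is extracted from a finite presentation, not from a flatness chase; without this kernel-sheaf step (which is also where the Noetherian hypothesis enters a second time) your argument does not terminate.
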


\begin{exer}
In the setting of Theorem \ref{coherence is local} (and admitting the result), prove that $M$ is projective if and only if $M_i$ is projective for all $i$. Coherent sheaves satisfying this condition are called vector bundles, and they behave well on general (not necessarily locally Noetherian) analytic adic spaces; see \cite[Theorem 1.4.2]{kedlaya-aws}.
\end{exer}

The proof of Theorem \ref{coherence is local} is somewhat elaborate. We will give it in the case when $A$ is strongly Noetherian, \textbf{which we now assume until Definition \ref{definition of coherence}}. Set
\[
M := \mc{F}(X).
\] 
If the theorem holds, then clearly we must have $M=\mc{F}(X)$, so this is forced on us. However, it is not so clear that $M$ is finitely generated, nor that the natural maps $M \otimes_A \oo_X(U) \to \mc{F}(U)$ are isomorphisms for rational subsets $U$ of $X$. We will prove this in steps.

As in the proof of sheafiness, the two broad steps of the proof are to first reduce to the case of a simple Laurent cover, and then compute directly there. The reduction is exactly the same as in the proof of sheafiness: We may refine the cover to a Laurent cover and then, by induction, we may reduce to a simple Laurent cover.

So, we assume that $I=\{1,2\}$ and that there is function $f\in A$ such that $U_1 = \{ |f|\leq 1\}$, $U_2 = \{ |f| \geq 1\}$. We set $U_{12} = U_1 \cap U_2$, $M_{12} = \mc{F}(U_{12})$, $A_i = \oo_X(U_i)$ and $A_{12} = \oo_X(U_{12})$. We have restriction maps
\[
\psi_i : M_i \to M_{12}
\]
which induce isomorphisms $M_{12} \cong M_i \otimes_{A_i} A_{12}$, and $M$ is the kernel of the map $\psi_1 - \psi_2 : M_1 \times M_2 \to M_{12}$. A key feature of a simple Laurent cover is that the map $A_2 \to A_{12}$ has dense image:

\begin{lemm}\label{cartans lemma} The following hold:
\begin{enumerate}
\item The map $A_2 \to A_{12}$ has dense image.

\item There is a constant $c>0$ such that for every positive integer $n$, every matrix $B \in \GL_n(A_{12})$ with $|B-1|\leq c$ can be written in form $B_1 B_2$ for matrices $B_i \in \GL_n(A_i)$.  
\end{enumerate}

\end{lemm}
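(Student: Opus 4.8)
For part (1), the plan is to realize $A_{12}$ as a rational localization of $A_2$ in which nothing new is inverted. On $U_2 = \{|f|\geq 1\}$ the function $f$ does not vanish, so $f$ is a unit in $A_2$. By transitivity of rational localizations, $A_{12} = \oo_X(U_{12})$ is the ring of $U_{12} = \{|f|\leq 1\}$ regarded as a rational subset of $\Spa A_2$, namely $A_{12} = A_2\langle T\rangle/\ol{(T-f)}$. The image of $A_2[T]/(T-f) = A_2$ (the last equality because $f\in A_2$) is dense in this completion, and since $f$ is already invertible in $A_2$ no element is inverted in the process; hence $A_2\to A_{12}$ has dense image. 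By contrast, the maps $A\to A_{12}$ and $A_1\to A_{12}$ genuinely adjoin $f^{-1}$, so there is no reason for them to be dense — this is what makes the statement asymmetric in $A_1$ and $A_2$.

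For part (2) I would combine a bounded additive splitting with a Newton-type iteration. The proof of acyclicity for the simple Laurent cover (which we are assuming) shows that the \v{C}ech differential $A_1\oplus A_2 \to A_{12}$, $(x,y)\mapsto x-y$, is a \emph{strict} surjection. Fixing a submultiplicative norm $|\cdot|$ on the complete Tate rings $A_1,A_2,A_{12}$, this yields a constant $\lambda$ such that every $g\in A_{12}$ can be written $g = g_1 - g_2$ with $g_i\in A_i$ and $\max(|g_1|,|g_2|)\leq \lambda|g|$. Applying this coordinatewise, every $C\in M_n(A_{12})$ decomposes as $C = C_1 - C_2$ with $C_i\in M_n(A_i)$ and $\max(|C_1|,|C_2|)\leq \lambda|C|$, with the \emph{same} $\lambda$, independent of $n$ — this uniformity is exactly what will let the final constant $c$ be chosen independently of $n$.

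Now write $B = 1 + C$ with $|C|\leq c$. Split $C = C_1 - C_2$ as above, set $U := 1 + C_1\in\GL_n(A_1)$ and $V := 1 - C_2\in\GL_n(A_2)$ (both invertible once $c$ is small, since near-identity elements of a complete Banach algebra are units), and replace $B$ by $B' := U^{-1}BV^{-1}$. Expanding $U^{-1}\approx 1 - C_1$ and $V^{-1}\approx 1 + C_2$, the first-order terms cancel precisely because $C_1 - C_2 = C$, so $B' = 1 + C'$ with $|C'|\leq K|C|^2$ for a constant $K$ depending only on $\lambda$ and the norm. Iterating produces $U_k\in\GL_n(A_1)$ and $V_k\in\GL_n(A_2)$ with $U_k,V_k\to 1$ doubly exponentially, and $B = (U_0U_1\cdots)\,B^{(\infty)}\,(\cdots V_1V_0)$ with $B^{(\infty)}=1$; the two infinite products converge in $\GL_n(A_1)$ and $\GL_n(A_2)$ to the desired factors $B = B_1B_2$.

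The main obstacle is the quantitative bookkeeping, in two places. First, one must know the additive splitting can be taken with a bound $\lambda$ that does not grow with $n$, which is what legitimizes the coordinatewise application and makes $c$ uniform; this is the additive content of the sheafiness proof for simple Laurent covers and is the only genuinely delicate input. Second, one must check that $c$ can be chosen so that $Kc<1$, giving $|C^{(k)}|\leq (Kc)^{2^k}/K\to 0$ and hence convergence of the near-identity products to elements of $\GL_n(A_i)$. Granting the strict exactness coming from acyclicity of $\oo_X$, these are standard estimates in the complete Tate rings $A_1,A_2$, but they are where the real work lies.
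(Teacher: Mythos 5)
Your proposal is correct and follows essentially the same route as the paper: part (1) rests on density of the uncompleted localization inside the completed rational localization, and part (2) uses acyclicity plus the open mapping theorem to produce an $n$-independent bounded additive splitting of $A_1 \times A_2 \to A_{12}$ (applied entrywise with the maximum norm), then iterates left/right multiplication by near-identity matrices over $A_1$ and $A_2$ so that the resulting infinite products converge to the factorization. The only cosmetic difference is that you correct with the exact inverses $U^{-1}$, $V^{-1}$, giving Newton-type quadratic convergence, whereas the paper multiplies by the first-order factors $1 - r_1(C_{n-1})$ and $1 + r_2(D_{n-1})$, giving geometric convergence $|V_n| \leq d^{n+2}$; both schemes close the argument in the same way.
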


\begin{proof}
For part (1), note that $A_2 = A\langle f^{-1} \rangle$ contains $A[f^{-1}]=A[f,f^{-1}]$, which is dense in $A_{12} = A\langle f,f^{-1} \rangle$. 

It remains to prove part (2). Equip $A_1$, $A_2$ and $A_{12}$ with submultiplicative norms that define the topology; matrices are then given the maximum norm with respect to the entries (note that this norm is submultiplicative as well). Let $r_i : A_i \to A_{12}$ be the restriction maps and let $r = r_1 -r_2 : A_1 \times A_2 \to A_{12}$ be the difference map; we further assume that the norms have been chosen so that $|r(x,y)|\leq \max(|x|,|y|)$. 

The map $r$ is surjective by acyclicity, so by the open mapping theorem there is a constant $d \in (0,1)$ such that any $x \in A_{12}$ can be written as $x=r(y_1,y_2)$ for some $(y_1,y_2) \in A_1 \times A_2$ with $d|y_i| \leq |x|$. We set $c= d^3$. Write $B=I+V_1$, then $|V_1|\leq c$. We can write $V_1 = r(C_1,D_1)$ with $|C_1|,|D_1|\leq d^{-1}|V_1|$. Now define matrices $V_n$, $C_n$ and $C'D_n$ recursively for $n\geq 2$ by
\[
1+V_{n} = (1-r_1(C_{n-1}))(1+V_{n-1})(1+r_2(D_{n-1}))
\]
and $r(C_n,D_n) = V_n$ with $|C_n|,|D_n|\leq d^{-1}|V_n|$ (these conditions do not define $(B_n,C_n)$ uniquely, but any choice will do). By induction one shows that $|V_n| \leq d^{n+2}$ and $|C_n|,|D_n| \leq d^{n+1}$, so the infinite products
\[
B_1 = \prod_{n=1}^\infty (1-C_n)\,\,\,\,\, \text{and}\,\,\,\,\, B_2 =\prod_{n=1}^{\infty} (1+D_n)
\]
converge to invertible matrices and satisfy $r_1(B_1)Br_2(B_2) = I$. It follows that $B = r_1(B_1^{-1})r_2(B_2^{-1})$, as desired.
\end{proof}

To prove Theorem \ref{coherence is local}, we want to show the following:

\begin{itemize}
\item $M$ is finitely generated over $A$.

\item The natural maps $M \otimes_A A_i \to M_i$ are isomorphisms.
\end{itemize} 

We start with the following lemma, cf. \cite[Lemma 2.7.4]{kedlaya-liu-i}.

\begin{lemm}\label{coherence lemma 1}
The maps $M \otimes_A A_i \to M_i$ are surjective, and $\psi_1- \psi_2$ is surjective.
\end{lemm}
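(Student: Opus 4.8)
The plan is to prove the surjectivity of $\psi_1-\psi_2$ first, since it is the genuinely topological statement, and then read off the surjectivity of the maps $M\otimes_A A_i\ra M_i$. Throughout I equip $M_1$, $M_2$ and $M_{12}$ with their canonical (Banach) norms, so that the restriction maps $\psi_i$ are continuous, and I fix finite generating sets, say $e_1,\dots,e_k$ of $M_2$ and $f_1,\dots,f_m$ of $M_1$. The starting observation is that the image of $\psi_1-\psi_2$ equals $\psi_1(M_1)+\psi_2(M_2)$, and that, because $M_{12}\cong M_i\otimes_{A_i}A_{12}$, each of $\psi_1(M_1)$ and $\psi_2(M_2)$ generates $M_{12}$ as an $A_{12}$-module.

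First I would establish density: by part (1) of Lemma \ref{cartans lemma} the dense subring $A_2\ra A_{12}$ hits the generators $\psi_2(e_j)$, and since the canonical topology on $M_{12}$ is the quotient topology from a surjection $A_{12}^k\to M_{12}$, the submodule $\psi_2(M_2)$ is dense in $M_{12}$. In particular the image of $\psi_1-\psi_2$ is dense. The remaining task is to upgrade density to surjectivity, and I would do this by a successive approximation: given $x\in M_{12}$ I want $(y_1,y_2)$ with $\|y_i\|\le C\|x\|$ and $\|x-(\psi_1(y_1)-\psi_2(y_2))\|\le\rho\|x\|$ for fixed constants $C>0$ and $\rho<1$, after which iterating on the remainder and summing the convergent series $y_i=\sum_n y_i^{(n)}$ in the complete modules $M_i$ produces an exact preimage.

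The main obstacle is precisely the construction of such a contracting approximation step. For the structure sheaf itself this is immediate: the exact Laurent splitting $A_{12}=r_1(A_1)+r_2(A_2)$ (the strict surjectivity of the \v{C}ech differential $r=r_1-r_2$ furnished by acyclicity, using $f\in A$) lets one peel off an $A_1$- and an $A_2$-part with no loss of norm. For modules one writes $x=\sum_j g_j\psi_2(e_j)$, splits each $g_j$, and is left with a remainder of the shape $\sum_l c_l\psi_1(f_l)$; re-expressing $\psi_2(e_j)=\sum_l B_{jl}\psi_1(f_l)$ through the gluing isomorphism $M_1\otimes_{A_1}A_{12}\cong M_2\otimes_{A_2}A_{12}$ introduces the transition matrix $B$, whose fixed norm distortion means a naive iteration need not contract (indeed the composite distortion of $B$ with its inverse is $\geq 1$). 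This is where part (2) of Lemma \ref{cartans lemma} is essential: using density one first modifies the two generating sets so that the transition matrix lies within distance $c$ of the identity, and then the factorization $B=B_1B_2$ with $B_i\in\GL(A_i)$ provides an honest $A$-rational splitting of the gluing datum, restoring a uniform contraction. Making this precise for a genuinely finite (non-free) module requires carrying out the argument on a finite free presentation of the $M_i$ and gluing the presentations, which is the elaborate heart of the proof.

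Finally I would deduce the surjectivity of $M\otimes_A A_i\ra M_i$. The cleanest route is the same factorization: in the locally free case the elements $B_1^{-1}\psi_1(\mbf{f})=B_2\psi_2(\mbf{e})$ lie simultaneously in $\psi_1(M_1)$ and in $\psi_2(M_2)$, hence define elements of $M=\Ker(\psi_1-\psi_2)$ whose two projections are generating sets of $M_1$ and of $M_2$ respectively; this yields both maps $M\otimes_A A_i\ra M_i$ at once and symmetrically, and the general case is again obtained by running the argument on presentations. Alternatively, once $\psi_1-\psi_2$ is known to be surjective one can argue by density: a bounded section together with the density of $\psi_2(M_2)$ forces the image of $M$ in $M_i$ to be dense, and a finitely generated, hence closed, dense $A_i$-submodule of $M_i$ must be all of $M_i$.
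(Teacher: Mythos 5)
Your architecture (prove density of the image of $\psi_1-\psi_2$, upgrade to surjectivity by a bounded approximation step, then deduce the module statements) is coherent, and your diagnosis that the naive peeling iteration does not contract is exactly right. But the crucial step is precisely the one you assert without proof: that ``using density one first modifies the two generating sets so that the transition matrix lies within distance $c$ of the identity.'' Density of $A_2 \to A_{12}$ lets you approximate the transition matrix $C$ by some $C' \in M_n(A_2)$, but to replace the generating set of $M_2$ by its $C'$-combinations \emph{and still have a generating set} you would need $C'$ to be invertible over $A_2$; a good approximation is automatically invertible in $\GL_n(A_{12})$, but that is useless for this purpose, and in the free case with bases your modification step is essentially equivalent to the approximate factorization you are trying to establish, so it cannot be dismissed as routine. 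The paper's proof shows how to avoid modifying generating sets at all: it takes \emph{both} transition matrices $B,C$ (note that $BC \neq I$ in general for non-free modules; one only has the relation $\sum_i (I-BC)_{ij}\,\psi_1(u_i)=0$), approximates $C$ by $C'$ over $A_2$, and applies Lemma \ref{cartans lemma}(2) to the matrix $I + B(C'-C)$, which is within $c$ of the identity \emph{by construction}. Writing $I+B(C'-C)=DE^{-1}$ with $D\in\GL_n(A_1)$, $E\in\GL_n(A_2)$, the elements $x_j=\left(\sum_i D_{ij}u_i,\ \sum_i (C'E)_{ij}v_i\right)$ lie in $M$ because $D-BC'E=(I-BC)E$ kills the $\psi_1(u_i)$, and their first components generate $M_1$ since $D$ is invertible over $A_1$; no iteration is then needed. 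Your proposed route to making your step precise (``finite free presentations of the $M_i$ and gluing the presentations'') is not carried out and is dangerously close to circular: producing a glued global presentation is exactly what the \emph{following} lemma accomplishes, using the present one.

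There is a second, independent gap in your fallback deduction of surjectivity of $M\otimes_A A_2\to M_2$. Density of the image of $M$ in $M_i$ (granting surjectivity of $\psi_1-\psi_2$ and the open mapping theorem) requires density of $\psi_{3-i}(M_{3-i})$ in $M_{12}$: one first approximates $\psi_1(m_1)$ by $\psi_2(n_2)$ and then corrects by a small preimage of the error. This works for $i=1$ because $A_2\to A_{12}$ has dense image, but it fails for $i=2$: the map $A_1=A\langle f\rangle \to A_{12}=A\langle f,f^{-1}\rangle$ does \emph{not} have dense image in general, so $\psi_1(M_1)$ need not be dense in $M_{12}$. This asymmetry is genuine, and it is why the paper handles $i=2$ by a separate algebraic lifting argument rather than by symmetry or density: given $m\in M_2$, one writes $\psi_2(m)=\psi_1(m_1)-\psi_2(m_2)$ with $m_i$ in the images already produced, observes that $(m_1,m+m_2)\in M$, and concludes that $m$ lies in the image of $M\otimes_A A_2\to M_2$. (Your symmetric construction of pairs generating $M_1$ and $M_2$ simultaneously would indeed handle both indices at once, but it rests on the unproven modification step above.)
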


\begin{proof}
Choose generating sets $u_1,\dots,u_n$ of $M_1$ and $v_1,\dots,v_n$ of $M_2$. Then we may choose matrices $B,C \in M_n(R_{12})$ such that $\psi_1(u_j) = \sum_{i=1}^n C_{ij}\psi_2(v_i)$ and $\psi_2(v_j) = \sum_{i=1}^n B_{ij}\psi_1(u_i)$. Since $A_2 \to A_{12}$ has dense image (by Lemma \ref{cartans lemma}), we may choose a matrix $C^\prime \in M_n(A_2)$ such that $B(C^\prime -C)$ has norm $\leq c$, where $c$ is the constant in Lemma \ref{cartans lemma}(2). We may then write $I+B(C^\prime-C) = D E^{-1}$ with $D \in \GL_n(A_1)$, $E \in \GL_n(A_2)$, and we define elements $x_j = (y_j,z_j) \in M_1\times M_2$ for $j=1,\dots,n$ by
\[
x_j = (y_j,z_j) = \left( \sum_{i=1}^n D_{ij}u_i, \sum_{i=1}^n (C^\prime E)_{ij} v_i \right).
\]
Then 
\[
\psi_1(x_j) - \psi_2(x_j) = \sum_{i=1}^n (D - BC^\prime E)_{ij}\psi_1(u_i) = \sum_{i=1}^n ((1-BC)E)_{ij}\psi_1(u_i) = 0,
\]
so $x_j \in M$. Since $D \in \GL_n(A_1)$, the elements $y_j$ generate $M_1$ over $A_1$, and it follows that $M \otimes_A A_1 \to M_1$ is surjective. Applying $-\otimes_{A_1}A_{12}$ to this surjection, we get a surjection $M\otimes_A A_{12} \to M_{12}$. Since $A_1 \times A_2 \to A_{12}$ is surjective, we deduce that we have a surjection $M\otimes_A (A_1 \times A_2) \to M_{12}$. Since this map factors through $\psi_1 -\psi_2$, we deduce that $\psi_1 -\psi_2$ is surjective as well.

It remains to prove that $M \otimes_A A_2 \to M_2$ is surjective. Let $m \in M_2$. By above, $\psi_2(m)$ lifts to $M\otimes_A (A_1 \times A_2)$, i.e. we can find $m_i$ in the image of $M \otimes_A A_i \to M_i$ such that $\psi_2(m) = \psi_1(m_1) -\psi_2(m_2)$. Put $m^\prime = (m_1,m+m_2)\in M_1 \times M_2$ and note that $m^\prime \in M$ by construction. In particular, the image of $M\otimes_A A_2 \to M_2$ contains $m_2$ and the image of $m^\prime \otimes 1$, i.e. $m+m_2$, and hence also $m$ as desired. This finishes the proof.
\end{proof}

We now finish the proof of Theorem \ref{coherence is local}.

\begin{lemm}
$M$ is a finitely generated $A$-module, and $\mc{F} \cong \wt{M}$.
\end{lemm}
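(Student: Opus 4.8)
The plan is to establish the two module-theoretic assertions underlying the statement, namely that $M$ is finitely generated over $A$ and that the natural maps $\phi_i : M\otimes_A A_i \to M_i$ are \emph{isomorphisms} (we already know from Lemma \ref{coherence lemma 1} that they are surjective), and then to deduce the sheaf isomorphism formally. For the latter deduction, note that $M = \mc{F}(X)$ together with the $\oo_X(U)$-module structure on $\mc{F}(U)$ produces, for every rational $U$, a natural map $\wt{M}(U) = M\otimes_A\oo_X(U) \to \mc{F}(U)$, hence a morphism of sheaves $\wt{M} \to \mc{F}$. Restricting to $U_i = \Spa A_i$ identifies $\wt{M}|_{U_i}$ with $\wt{M\otimes_A A_i}$ and this morphism with $\wt{\phi_i}$; since the functor $\wt{(-)}$ is exact (Proposition \ref{quasicoherent sheaves on affines}), the isomorphy of the $\phi_i$ makes $\wt{M} \to \mc{F}$ an isomorphism on the cover $\{U_1,U_2\}$, and therefore an isomorphism of sheaves. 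Thus everything reduces to the two module statements.

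To prepare the finiteness argument, I first extract a finite generating candidate. By Lemma \ref{coherence lemma 1} we have elements $x_1,\dots,x_n \in M$ whose images $y_j$ generate $M_1$ over $A_1$; moreover $\phi_2$ is surjective, so fixing finitely many generators of $M_2$ and writing chosen preimages in $M\otimes_A A_2$ as finite combinations of elements of $M$ yields finitely many further elements of $M$. Let $N \sub M$ be the finitely generated submodule generated by all of these, so that $N\otimes_A A_i \to M_i$ is surjective for both $i$. It now suffices to prove that $N = M$ and that the $\phi_i$ are injective.

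This is the crux, and the main obstacle. Given $m=(m_1,m_2)\in M$, surjectivity of $N\otimes_A A_i \to M_i$ lets us write $m_i$ as the image of some $\xi_i \in N\otimes_A A_i$; the relation $\psi_1(m_1)=\psi_2(m_2)$ forces $\xi_1$ and $\xi_2$ to have the same image in $N\otimes_A A_{12}$ up to an element of $\ker(N\otimes_A A_{12}\to M_{12})$. Were $\xi_1$ and $\xi_2$ to agree exactly in $N\otimes_A A_{12}$, the sheaf property of $\wt{N}$ (Proposition \ref{quasicoherent sheaves on affines}) would glue them to an element of $N = N\otimes_A A$ mapping to $m$, giving $m\in N$. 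The point is therefore to correct $(\xi_1,\xi_2)$ within their $\phi_i$-fibres so that the two images in $N\otimes_A A_{12}$ coincide. This is achieved exactly as in the proof of Lemma \ref{coherence lemma 1}, but iterated to exactness rather than stopped after one step: using that $A_2 \to A_{12}$ has dense image (Lemma \ref{cartans lemma}(1)) one approximates the discrepancy by elements pulled back from $A_2$, and using the Cartan factorization (Lemma \ref{cartans lemma}(2)) one absorbs the resulting small error into $\GL$ over $A_1$ and $A_2$; completeness of the canonical topologies then makes the correction series converge. The same convergence argument, applied to a putative nonzero element of $\ker\phi_i$, forces it to vanish, so that the $\phi_i$ are injective as well. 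Thus $M$ is finitely generated and each $\phi_i$ is an isomorphism, completing the proof. The entire difficulty is concentrated in this convergence step; it is the module-level analogue of Tate's acyclicity computation, and it is precisely where completeness and the factorization of Lemma \ref{cartans lemma} are indispensable, everything else being formal.
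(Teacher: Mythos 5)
Your reduction to the two module statements, and the extraction of a finitely generated candidate $N$, match the opening of the paper's argument, and you have correctly located the crux: given $m=(m_1,m_2)\in M$ with lifts $\xi_i \in N\otimes_A A_i$, one must correct $(\xi_1,\xi_2)$ by elements of $K_i := \ker(N\otimes_A A_i \to M_i)$ so that the images in $N\otimes_A A_{12}$ agree, i.e. one must show that $K_1\times K_2 \to K_{12} := \ker(N\otimes_A A_{12}\to M_{12})$ is surjective. But this is exactly where your proof stops being a proof. Density of $A_2\to A_{12}$ lets you approximate the discrepancy $\delta=\bar\xi_1-\bar\xi_2$ by elements coming from $N\otimes_A A_2$, but not by elements of the image of $K_2$: nothing guarantees the approximants map to zero in $M_{12}$, and Lemma \ref{cartans lemma}(2) gives no purchase here, since $\delta$ is a kernel element, not an invertible matrix near the identity, and there is no generating-set/transition-matrix structure against which to absorb errors into $\GL_n(A_1)\times \GL_n(A_2)$. ``Iterated to exactness rather than stopped after one step'' is a gesture at an argument, not an argument; the same goes for deducing injectivity of the $\phi_i$ from ``the same convergence argument.''

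What actually closes the gap --- and this is the paper's key move, which requires no new analysis at all --- is a homological bootstrap: take $F$ finite free with $F_i = F\otimes_A A_i \to M_i$ surjective, and observe that the kernels $N_i = \ker(F_i \to M_i)$ define a sheaf $\mc{N} = \ker(\wt{F} \to \mc{F})$ \emph{of the same type as} $\mc{F}$: the $N_i$ are finitely generated because the $A_i$ are Noetherian, and $N_i\otimes_{A_i}A_{12}\cong N_{12}$ by flatness of $A_i \to A_{12}$ applied to $0\to N_i\to F_i\to M_i\to 0$. So Lemma \ref{coherence lemma 1} applies verbatim to $\mc{N}$ and yields precisely the surjectivity of $N_1\times N_2\to N_{12}$ that your correction step needs; the snake lemma applied to the two gluing sequences then gives $\Coker(F\to M)=0$, i.e. finite generation of $M$. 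Injectivity of the $\phi_i$ is likewise not obtained by convergence: choosing a second finite free cover $\wt{G} \to \mc{N}$ (possible because $\mc{N}$ is of the same type) gives a presentation $\wt{G}\to\wt{F}\to\mc{F}\to 0$, and exactness of the functor $\wt{(\cdot)}$ identifies $\mc{F}$ with the sheaf associated to $\Coker(G\to F)$, from which the fact that the $\phi_i$ are isomorphisms falls out a posteriori. In short, the difficulty you correctly isolated is resolved in the paper not by an analytic or convergence argument but by re-applying the already-proved surjectivity lemma to the kernel sheaf; your proposal is missing that idea, and without it the crux step does not go through.
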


\begin{proof}
Choose a finitely generated free $A$-module $F$ and a map $F \to M$ such that the compositions $F_i := F \otimes_A A_i \to M \otimes_A A_i \to M_i$ are surjective (this is possible by Lemma \ref{coherence lemma 1}). Setting $F_{12} = F \otimes_A A_{12}$, the induced map $F_{12} \to M_{12}$ is surjective as well. Let $N_i$ be the kernel of $F_i \to M_i$, let $N_{12}$ be the kernel of $F_{12} \to M_{12}$, and let $N$ be the kernel of $N_1 \times N_2 \to N_{12}$. Consider the diagram
\[
\xymatrix{
0 \ar[r] & F \ar[r] \ar[d] & F_1 \times F_2 \ar[r] \ar[d] & F_{12} \ar[r] \ar[d] & 0\\
0 \ar[r] & M \ar[r] & M_1 \times M_2 \ar[r] & M_{12} \ar[r] & 0 
}
\]
with exact rows. Let $K = \Ker(F \to M)$ and $C = \Coker(F \to M)$. Applying the snake lemma we get an exact sequence
\[
0 \to K \to N_1 \times N_2 \to N_{12} \to C \to 0,
\]
from which we deduce that $K=N$. Moreover, note that the map $F \to M$ induces a map $\wt{F} \to \mc{F}$ and that the kernel $\mc{N}$ of this latter map is a sheaf satisfying $\mc{N}|_{U_i} = \wt{N_i}$ and $\mc{N}(U_{12})=N_{12}$. In particular, it is of the same type as $\mc{F}$, so we may apply Lemma \ref{coherence lemma 1} to it to deduce that $N_1 \times N_2 \to N_{12}$ is surjective. It then follows that $C=0$, so $M$ is finitely generated as desired. To finish, note that $\wt{F} \to \mc{F}$ is surjective, since it is surjective on $U_1$ and $U_2$. Since $\mc{N}$ is a sheaf of the same type as $\mc{F}$, we can choose a finite free $A$-module $G$ and a surjection $\wt{G} \to \mc{N}$. But then we have a presentation
\[
\wt{G} \to \wt{F} \to \mc{F} \to 0,
\]
so $\mc{F}$ is the cokernel of $\wt{G} \to \wt{F}$. But the cokernel of this map is just the sheaf associated with the $A$-module $\Coker(G \to F)$, so $\mc{F}$ is the sheaf associated with an $A$-module, and that $A$-module must be $M$. 
\end{proof}

Armed with Theorem \ref{coherence is local}, we can now define coherent sheaves on locally Noetherian adic spaces.

\begin{defi}\label{definition of coherence}
Let $X$ be a locally Noetherian adic space. An $\oo_X$-module $\mc{F}$ is called coherent if there is a cover $(U_i)_{i\in I}$ of $X$ by affinoid opens $U_i = \Spa(A_i,A^+_i)$ and finitely generated $A_i$-modules $M_i$ such that $\mc{F}|_{U_i} \cong \wt{M_i}$ for all $i$.
\end{defi}

\begin{exer}
Check that, if $U=\Spa(A,A^+) \sub X$ is any open Noetherian affinoid adic space, then $\mc{F}|_U \cong \wt{M}$ for some finitely generated $A$-module $M$.
\end{exer}

In our discussion so far, we have only explicitly defined the value of our coherent sheaves on affinoids and their rational subsets. We finish by spelling out how to do define their values on arbitrary opens, as \emph{topological} modules.

So, suppose that we have a coherent sheaf $\mc{F}$ on a locally Noetherian adic space $X$. For every open affinoid $U=\Spa(A,A^+) \sub X$ with $A$ Noetherian, $\mc{F}(U)$ is therefore a finitely generated $A$-module and hence carries the canonical topology. The collection of such $U$ form a basis of neighbourhoods of $X$, and for a general open $V\sub X$ we define 
\[
\mc{F}(V) := \lim_{U\sub V} \mc{F}(U)
\]
as the \emph{limit} of the $\mc{F}(U)$ as $U$ ranges over the affinoid Noetherian opens contained in $V$, in the category of \emph{topological} $\oo_X(V)$-modules\footnote{This presupposes that we have topologized $\oo_X(V)$; this is done by taking the same limit for $\mc{F}=\oo_X$ in their category of topological rings.} (note that this need not be an inverse limit). With this definition, the restriction maps become continuous maps.

Let us spell out the result of this more carefully when $X$ is a rigid space over $K$. In this case, we only consider $U\sub X$ that are affinoid rigid spaces, and then the $\mc{F}(U)$ are $K$-Banach spaces. In particular, if $V\sub X$ is a quasicompact open with open cover $V = U_1 \cup \dots \cup U_r$ by affinoid rigid spaces, then $\mc{F}(V)$ is a closed subspace of $\prod_{i=1}^r \mc{F}(U_i)$ and hence a Banach space. For a general open $V\sub X$, we can write
\[
\mc{F}(V) = \varprojlim_{W\sub V} \mc{F}(W)
\]
where the \emph{inverse} limit runs over all quasicompact open $W\sub V$. In particular, if $V$ is a countable union of quasicompact opens (which is almost always the case in practice), $\mc{F}(V)$ is a Fr\'echet space.  

\newpage

\section{Lecture 2: Finiteness properties of morphisms}

In the next lecture and a bit, we will define many basic properties of morphisms of adic spaces, analogous to the basic ones used in algebraic geometry. In this lecture we focus on finiteness properties, starting with finite morphisms. The definitions we give are not always the standard definitions given; for simplicity we often give equivalent characterizations that are (sometimes) more convenient to use.

\begin{defi}
Let $f : X \to Y$ be a morphism of locally Noetherian adic spaces.

\begin{enumerate}
\item We say that $f$ is finite if there is a cover $(U_i)_{i\in I}$ of $Y$ by affinoid opens $U_i = \Spa(A_i,A^+_i)$ such that $V_i = f^{-1}(U_i)$ is an affinoid adic space $\Spa(B_i,B_i^+)$ with $B_i$ finitely generated as an $A_i$-module (with its canonical topology) and $B_i^+$ is the integral closure of $A_i^+$ inside $B_i$.

\item We say that $f$ is a Zariski closed immersion if $f$ is finite and, taking a cover as in (1) and using the same notation, the maps $A_i \to B_i$ are surjective.
\end{enumerate}
\end{defi}

\begin{exer}
Let $X$ be a locally Noetherian adic space. Show that finite morphisms $Y \to X$ correspond to coherent $\oo_X$-algebras, and that Zariski closed immersions correspond to those coherent $\oo_X$-algebras which are quotients of $\oo_X$, or, equivalently, to coherent ideals $\mc{I} \sub \oo_X$.
\end{exer}

\begin{rema}
One can define finite morphisms in larger generality. Moreover, Zariski closed immersions can be used to define a ``Zariski topology'' on a locally Noetherian adic space, taking as closed subsets the (images of) Zariski closed immersions. This can sometimes be very useful, though one has to be a bit careful when using it as its interaction with the usual topology is slightly subtle. 
\end{rema}

Just as in algebraic geometry, finite morphisms sit inside the larger class of proper morphisms. In algebraic geometry, a key feature of proper morphisms is that they satisfy the so-called valuative criterion of properness. We would like to start by considering the analogue of this in adic geometry, but first we need to define morphisms (locally) of finite type. Huber defines a plethora of finiteness conditions in \cite{hu-etale}, and we will not be able to cover them all. Instead, we will make the following simplifying assumption on our adic spaces.

\textbf{Assumption: For the rest of this lecture, all adic spaces will locally be of the form $X=\Spa(A,A^+)$ for $A$ strongly Noetherian.} 

With this assumption, we make the following definitions. Recall that the notion of a morphism $(A,A^+) \to (B,B^+)$ being of topologically finite type has been defined in H\"ubner's lectures \cite[Definition 7.10]{hubner-heid}.

\begin{defi} Let $X$ and $Y$ be adic spaces.
\begin{enumerate}

\item A morphism $f : X \to Y$ is locally of finite type if for every point $x\in X$ there is an open Noetherian affinoid subset $V = \Spa(A,A^+) \sub Y$ and an open affinoid subset $U = \Spa(B,B^+)\sub X$ containing $x$ such that $f(U) \sub V$ and the induced map $(A,A^+) \to (B,B^+)$ is of topologically finite type.

\item A morphism $f : X \to Y$ is of finite type if it is locally of finite type and quasicompact.  
\end{enumerate}
\end{defi}

In particular, if $K$ is a nonarchimedean field, then a rigid space over $K$ is precisely an adic space $X \to \Spa(K)$ which is locally of finite type. In particular, all morphisms between rigid spaces over $K$ are locally of finite type. Next, we wish to discuss separated, proper and partially proper morphisms. These have a number of equivalent definitions, just like in algebraic geometry. We give one in terms of a valuative criterion.

\begin{defi}\label{sep and prop: adic} Let $f : X \to Y$ be a quasiseparated morphism of adic spaces which is locally of finite type, and consider a commuting diagram
\[
\xymatrix{
\Spa(K,K^\circ) \ar[r]\ar[d] & X  \ar[d]^f \\
\Spa(K,K^+) \ar[r] & Y
}
\]
where $K$ is a nonarchimedean field and $K^+\sub K^\circ$ is an open \emph{valutation} subring.
\begin{enumerate}
\item We say that $f$ is separated if, for any diagram as above, there is at most one morphism $\Spa(K,K^+) \to X$ making the diagram commute.

\item We say that $f$ is partially proper if, for any diagram as above, there is exactly one morphism $\Spa(K,K^+) \to X$ making the diagram commute.

\item We say that $f$ is proper if it is partially proper and quasicompact.
\end{enumerate}
\end{defi}

\begin{rema}
Huber defines separatedness, partial properness and properness under a weaker initial finiteness assumption on $f$ called locally $^+$-weakly finite type. We refer to \cite{hu-etale} for details. We also remark that if $f$ is partially proper, then in fact, for any commutative diagram
\[
\xymatrix{
\Spa(R,R^\circ) \ar[r]\ar[d] & X  \ar[d]^f \\
\Spa(R,R^+) \ar[r] & Y
}
\]
with $R$ strongly Noetherian, there exists a unique morphism $\Spa(R,R^+) \to X$ making the diagram commute (see e.g. \cite[Proposition 6.13]{ludwig-notes}).
\end{rema}

Note that Definition \ref{sep and prop: adic} crucially uses higher rank points, and does not have an obvious analogue in the world of classical rigid geometry. We will now recall Kiehl's original definitions for rigid spaces (still phrased in the language of adic spaces), starting with separatedness. Let $K$ be a nonarchimedean field.

\begin{defi}\label{sep: rigid}
A morphism $X \to Y$ of rigid spaces over $K$ is called separated if the diagonal morphism $X \to X \times_Y X$ is a Zariski closed immersion. We say that $X$ is separated if the structure map $X \to \Spa(K)$ is separated.
\end{defi}

This is the definition that should be familiar from the world of schemes, and it has some familiar consequences (for example, the intersection of two affinoid opens inside a separated rigid space is affinoid). It turns out to be equivalent to the definition given in Definition \ref{sep and prop: adic}(1), though this requires some work \cite[Remark 1.3.19(ii)]{hu-etale}. We now move on to properness and partially properness. The ``rigid'' definitions rely on the notion of relative compactness of open subsets.

\begin{defi}
Let $X \to Y$ be a map of rigid spaces over $K$. Assume that $Y$ is affinoid and that $U \sub V \sub X$ are open affinoid subsets. We say that $U$ is relatively compact in $V$ over $Y$ if there are elements $f_1,\dots,f_r \in \oo_X(V)$ which topologically generate $\oo_X(V)$ over $\oo_Y(Y)$ and such that $|f_i(x)|<1$ for all $x\in U$.
\end{defi}

\begin{defi}\label{prop: rigid} Let $f : X \to Y$ be a separated morphism of rigid spaces over $K$.
\begin{enumerate}
\item We say that $f$ is partially proper if there exists an open affinoid cover $(Y_i)_{i\in I}$ and, for each $i \in I$, two open affinoid covers $(U_j)_{j\in J}$ and $(V_j)_{j\in J}$ of $f^{-1}(Y_i)$ such that $U_j \sub V_j$ and $U_j$ is relatively compact in $V_j$ over $Y_i$, for all $j$. We say that $X$ is partially proper if the structure map $X \to \Spa(K)$ is partially proper.

\item We say that $f$ is proper if $f$ is partially proper and quasicompact. We say that $X$ is proper if the structure map $X \to \Spa(K)$ is proper.
\end{enumerate}
\end{defi}

\begin{exer}\label{Kiehl proper implies proper}
Prove that if a map of rigid spaces $X \to Y$ over $K$ is (partially) proper in the sense of Definition \ref{prop: rigid}, then it is (partially) proper in the sense of Definition \ref{sep and prop: adic} (note that the $K$ appearing in Definition \ref{sep and prop: adic} is not the $K$ that $X$ and $Y$ are defined over, but completely arbitrary).
\end{exer}

This exercise points to one of the utilities of Definition \ref{prop: rigid} -- the ``rigid'' definition is often easier to check in practice. The converse to Exercise \ref{Kiehl proper implies proper} is known, but it is much more difficult and the proofs that one can (try to) piece together from the literature (as far as the author is aware of) use formal models as an intermediate. Let $f: X \to Y$ be a morphism of separated, quasi-paracompact rigid spaces. Then $f$ has a formal model $\mf{f}$ \cite[\S II.8.4, Lemma 4]{bosch} (see Lecture 3 below for more information on formal models), and Huber has shown that $\mf{f}$ is proper (in the sense that the induced morphism on special fibres is a proper morphism of schemes) if and only if $f$ is proper in the sense of Definition \ref{sep and prop: adic} \cite[Remark 1.3.18(ii)]{hu-etale}, and Temkin has shown $\mf{f}$ is proper if and only if $f$ is proper in the sense of Definition \ref{prop: rigid} \cite[Corollaries 4.4, 4.5]{temkin}. Using \cite[Remark 1.3.18(iii)]{hu-etale} and Temkin's results, we would optimistically believe that one can show that the two definitions of partial properness agree as well, but we have not checked the details.

At this point, let us state Kiehl's proper mapping theorem. For a proof, we refer to \cite[\S I.6.4]{bosch}.

\begin{theo}
Let $f : X \to Y$ be a proper map of rigid spaces over $K$ (in the sense of Definition \ref{prop: rigid}) and let $\mc{F}$ be a coherent $\oo_X$-module. Then, for any $i\geq 0$, the higher direct image $R^i f_\ast(\mc{F})$ is a coherent $\oo_Y$-module.
\end{theo}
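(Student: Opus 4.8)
The plan is to reduce the statement to a functional-analytic finiteness theorem and to invoke the theory of completely continuous operators over affinoid algebras. Since coherence is a local condition on $Y$ (Definition \ref{definition of coherence}, whose robustness is guaranteed by Theorem \ref{coherence is local}) and since $R^if_\ast(\mc{F})$ is the sheafification of $U \mapsto H^i(f^{-1}(U),\mc{F})$, I would first reduce to the case where $Y=\Spa(A,A^+)$ is affinoid with $A$ strongly Noetherian. The task then splits into two parts: showing that each $H^i(X,\mc{F})$ is a finitely generated $A$-module, and showing that its formation is compatible with restriction to rational subsets $Y'\sub Y$, so that the resulting modules assemble into a coherent sheaf.

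For the cohomology computation I would exploit that $f$ is proper in the sense of Definition \ref{prop: rigid}: after shrinking $Y$ we obtain two finite affinoid covers $\mathfrak{U}=(U_j)_j$ and $\mathfrak{V}=(V_j)_j$ of $X=f^{-1}(Y)$ with $U_j\sub V_j$ and $U_j$ relatively compact in $V_j$ over $Y$. Since $f$ is separated, all finite intersections of these affinoids are again affinoid, and on an affinoid a coherent sheaf is $\wt{M}$ for a finitely generated $M$, hence acyclic by Proposition \ref{quasicoherent sheaves on affines}. By Cartan's Theorem \ref{Cartan main}, the Čech complexes $\C^\bu(\mathfrak{U},\mc{F})$ and $\C^\bu(\mathfrak{V},\mc{F})$ both compute $H^i(X,\mc{F})$; as the covers are finite, these are \emph{bounded} complexes of $K$-Banach $A$-modules (using that $\mc{F}(U)$ is Banach for quasicompact $U$), and the restriction map $\C^\bu(\mathfrak{V},\mc{F})\to\C^\bu(\mathfrak{U},\mc{F})$ is a quasi-isomorphism.

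The heart of the argument, and the step I expect to be the main obstacle, is functional-analytic. The key geometric input is that relative compactness of $U_j$ in $V_j$ over $Y$ forces the restriction map $\mc{F}(V_j)\to\mc{F}(U_j)$ to be \emph{completely continuous}: the defining inequalities $|f_i(x)|<1$ on $U_j$ let one approximate restriction by finite-rank maps after expanding in the topologically generating functions $f_i$ (the model case being restriction of functions from a larger polydisc to a strictly smaller one). Consequently the quasi-isomorphism $\C^\bu(\mathfrak{V},\mc{F})\to\C^\bu(\mathfrak{U},\mc{F})$ is completely continuous in each degree. I would then invoke Kiehl's finiteness criterion: a completely continuous quasi-isomorphism of bounded complexes of Banach $A$-modules has finitely generated cohomology. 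Proving (or carefully importing) this criterion is where the real work lies, since it rests on Riesz--Fredholm theory over affinoid algebras — essentially that $1-u$ has finitely generated kernel and cokernel for a completely continuous endomorphism $u$ — together with the open mapping theorem. Applying it yields that $H^i(X,\mc{F})$ is finitely generated over $A$.

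To upgrade finite generation to coherence I would check compatibility with localization. For a rational subset $Y'\sub Y$ the map $A\to\oo_Y(Y')$ is flat (by the flatness theorem recorded in Lecture~1), and both the Čech description and the completely continuous structure are compatible with this flat base change, giving $H^i(X,\mc{F})\otimes_A\oo_Y(Y')\cong H^i(f^{-1}(Y'),\mc{F})$. This identifies the presheaf $U\mapsto H^i(f^{-1}(U),\mc{F})$ on rational subsets with $\wt{H^i(X,\mc{F})}$, exhibiting $R^if_\ast(\mc{F})$ as the coherent sheaf associated with the finitely generated $A$-module $H^i(X,\mc{F})$, as desired.
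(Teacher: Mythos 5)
The paper does not actually prove this theorem: it states it and refers to \cite[\S I.6.4]{bosch} for Kiehl's proof. Your outline --- reduction to affinoid $Y$, the two nested finite affinoid covers supplied by Definition \ref{prop: rigid}, complete continuity of restriction maps coming from relative compactness, Kiehl's finiteness criterion via nonarchimedean Riesz--Fredholm theory, and flat base change to rational subsets to upgrade finite generation to coherence --- is precisely that classical argument, so it takes essentially the same approach as the proof the paper relies on.
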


In particular, when $Y=\Spa(K)$, the theorem says that $H^i(X,\mc{F})$ is a finite dimensional $K$-vector space. Our next topic in this lecture is the notion of (locally) quasifinite morphisms, with the goal of (mostly) proving Proposition \ref{characterization of locally quasifinite maps}, which plays a key role in the construction of eigenvarieties. 

\begin{defi}
Let $f : X \to Y$ be a morphism of adic spaces which is locally of finite type.

\begin{enumerate}
\item We say that $f$ is locally quasifinite if $f^{-1}(y)$ is a discrete topological space for every $y\in Y$.

\item We say that $f$ is quasifinite if it is locally quasifinite and quasicompact.
\end{enumerate} 
\end{defi}

Just as in algebraic geometry, the notion of quasifiniteness allows one to give a characterization of finite morphisms inside proper morphisms.

\begin{prop}\label{qf and proper implies finite}
Let $f : X \to Y$ be a morphism of adic spaces which is locally of finite type. Then $f$ is finite if and only if $f$ is proper and quasifinite.
\end{prop}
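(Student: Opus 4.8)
The plan is to prove the two implications separately, with essentially all the work concentrated in the converse.

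For the forward implication, suppose $f$ is finite. Quasicompactness is immediate since $f$ is affine, so by Definition~\ref{sep and prop: adic} I would only need to check quasifiniteness and the valuative criterion for properness, both of which are local on $Y$. So write $Y=\Spa(A,A^+)$ and $f^{-1}(Y)=\Spa(B,B^+)$ with $B$ module-finite over $A$ and $B^+$ the integral closure of $A^+$ in $B$. For $y\in Y$ the fibre $f^{-1}(y)$ is the adic spectrum of a finite algebra over the residue field $k(y)$, hence a finite discrete space, so $f$ is quasifinite. For properness, I would take a diagram as in Definition~\ref{sep and prop: adic}: the given map $\Spa(K,K^\circ)\to X$ pins down a continuous ring map $B\to K$, and since every element of $B^+$ is integral over $A^+$, its image in $K$ is integral over $K^+$; as $K^+$ is a valuation ring, hence integrally closed in $K$, one gets $B^+\to K^+$, which is exactly a lift $\Spa(K,K^+)\to X$. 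The ring map $B\to K$ and the valuation subring $K^+$ are both forced, so the lift is unique, and $f$ is proper.

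For the converse, assume $f$ is proper and quasifinite. Finiteness is local on the target, so I would reduce to $Y=\Spa(A,A^+)$ affinoid with $A$ strongly Noetherian, whence quasicompactness of $f$ makes $X$ quasicompact. By the proper mapping theorem (Kiehl's theorem in the rigid case, and Huber's generalization for strongly Noetherian bases), $f_\ast\oo_X$ is a coherent $\oo_Y$-algebra $\mc{B}$; set $B=\mc{B}(Y)=\oo_X(X)$, a finite $A$-algebra, and let $B^+$ be the integral closure of $A^+$ in $B$. Then $h\colon Y'=\Spa(B,B^+)\to Y$ is finite, and the identity of $B=\oo_X(X)$ produces a canonical Stein factorization $f= h\circ g$ with $g\colon X\to Y'$. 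Since $h$ is finite, hence separated, the cancellation property of the valuative criterion shows $g$ is again proper; moreover $g^{-1}(y')\subseteq f^{-1}(h(y'))$ for every $y'\in Y'$, so $g$ has discrete fibres and is quasifinite. One also verifies $g_\ast\oo_X=\oo_{Y'}$, using that the formation of $f_\ast\oo_X$ is compatible with the rational localizations of $Y$, which are flat for Noetherian Huber pairs. It then suffices to prove that $g$ is an isomorphism, for then $f=h\circ g$ is finite.

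The main obstacle is this last step. The identity $g_\ast\oo_X=\oo_{Y'}$ forces $g$ to be surjective and, via the Stein connectedness principle for proper morphisms, to have connected fibres; combined with quasifiniteness, every nonempty fibre becomes a single point. A proper morphism is closed, so $|g|$ is a continuous closed bijection, i.e. a homeomorphism, and together with the isomorphism $\oo_{Y'}\xrightarrow{\sim}g_\ast\oo_X$ this would exhibit $g$ as an isomorphism of adic spaces. The genuine difficulty is concentrated here: upgrading the fibrewise and topological finiteness coming from quasifiniteness and properness to the sheaf-level statements (connectedness of the fibres, and $g_\ast\oo_X=\oo_{Y'}$ on all rational subsets) is the adic analogue of Zariski's Main Theorem. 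I expect this to be the hardest part, and the cleanest rigorous route is arguably to pass to a proper formal model of $f$ and deduce the statement from its scheme-theoretic counterpart on the special fibre, together with a lifting argument.
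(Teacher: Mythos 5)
Your forward implication (finite $\Rightarrow$ proper and quasifinite) is correct and is essentially identical to the paper's argument: reduce to affinoids, verify the valuative criterion using that $B^+$ is integral over $A^+$ and that the valuation ring $K^+$ is integrally closed in $K$, and obtain quasifiniteness from finiteness of the fibre algebra. The paper is slightly more careful at the last step -- it passes to the nilreduction of $C = B\otimes_A K$, a finite product of finite field extensions of $K$, and uses that a valuation admits only finitely many, pairwise incomparable, extensions to a finite field extension -- but your compressed ``hence a finite discrete space'' is the same point.

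For the converse there is a genuine gap, and it is worth knowing that the paper does not prove this direction at all: it simply cites \cite[Proposition 1.5.5]{hu-etale}. Your Stein factorization strategy is the natural one from algebraic geometry, but the inputs it needs are not available at the level of generality of the statement. First, the proposition concerns adic spaces that are locally strongly Noetherian, with properness defined by the valuative criterion of Definition \ref{sep and prop: adic}; the proper mapping theorem quoted in the paper is Kiehl's, which applies to rigid spaces over a fixed $K$ and to properness in the sense of Definition \ref{prop: rigid}. The equivalence of these two notions of properness is itself described in the paper as a difficult theorem whose known proofs pass through formal models, so invoking ``Kiehl plus Huber's generalization'' either silently restricts to rigid spaces or presupposes a coherence theorem that is neither in the paper nor, in this generality, easily located in the literature. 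Second, the ``Stein connectedness principle'' you appeal to (connectedness of the fibres of a proper morphism $g$ with $g_\ast\oo_X = \oo_{Y'}$) is an adic analogue of the theorem on formal functions; it is established nowhere here and is at least as hard as the statement being proved. Third, even granted single-point fibres and that $|g|$ is a closed continuous bijection, concluding that $g$ is an isomorphism of adic spaces requires identifying the structure sheaves \emph{together with} the $+$-subrings, which you assert rather than prove. You flag these issues yourself, which is honest, but as written the converse is a plausible roadmap with all of the hard content deferred (to a ZMT-type statement or to formal models), not a proof; the paper's own route is simply to defer that direction to Huber.
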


\begin{proof}
We will only sketch the proof in the easy direction that finite morphisms are proper and quasifinite; for the converse we refer to \cite[Proposition 1.5.5]{hu-etale}. We start with properness. Quasicompactness is immediate, so it suffices to prove partial properness. Properness is local on $Y$, so we may assume that $Y=\Spa(A,A^+)$ and $X=\Spa(B,B^+)$ are affinoid. Looking at the valuative criterion, we consider any diagram
\[
\xymatrix{
(A,A^+) \ar[r]^{\phi}\ar[d]^\eta & (B,B^+)  \ar[d]^\psi \\
(K,K^+) \ar[r] & (K,K^\circ)
}
\]
where $K$ is a nonarchimedean field and $K^+\sub K^\circ$ is an open valuation subring, and we need to show that there is a unique morphism $(B,B^+) \to (K,K^+)$ making the diagram commutes. Uniqueness is clear if it exists, since it has to be given by $\psi$, and existence is a question of whether $\psi(B^+) \sub K^+$ or not. But $\eta(A^+)\sub K^+$ and $B^+$ is the integral closure of $\phi(A^+)$, so since $K^+$ is integrally closed we see that $\psi(B^+)\sub K^+$ as desired.

It remains to prove that $f$ is locally quasifinite. Let $y\in Y$, corresponding to a map $\Spa(K,K^+) \to Y$. The adic space fibre product $\Spa(K,K^) \times_Y X$ is $\Spa(C,C^+)$, where $C = B \otimes_A K$ and $C^+$ is the integral closure of the image of $B^+\otimes_{A^+} K^+$ inside $C$. In particular, $(K,K^+) \to (C,C^+)$ is finite, and hence the nilreduction $C_{red}$ is a finite product of finite field extensions $C_i$ of $K$. Since any valuation only has finitely many extensions to a finite field extension and any two such are incomparable, it follows that $f^{-1}(y)$ is a finite discrete set.
\end{proof}

\begin{exer}
Let $f : X \to Y$ be a finite morphism of rigid spaces over $K$, where $K$ is a nonarchimedean field. Prove directly that $f$ is proper in the sense of Definition \ref{prop: rigid}. 
\end{exer}

We can now state a characterization, due to Huber \cite[Proposition 1.5.6]{hu-etale}, of morphisms that are locally quasifinite and partially proper. This characterization is important in the theory of eigenvarieties; see \cite[Th\'eor\`eme B.1]{aip-halo} and the lectures of Ludwig in this spring school \cite{ludwig-heid}. See also \cite[Proposition 1.5.4]{hu-etale} for a characterization of locally quasifinite morphisms, and \cite[Theorem A.1.3]{conrad-relative-ampleness} for the characterization of locally quasifinite morphisms in rigid analytic geometry. We also refer to \cite[\S 4]{buzzard-eigenvarieties}, where similar results are discussed (implicitly) in the rigid analytic setting. Before stating the characterization, we need to define the notion of tautness.

\begin{defi}\label{def of taut}
Let $X$ be an adic space. $X$ is taut if $X$ is quasiseparated and the closure of any quasicompact open subset of $X$ is quasicompact.
\end{defi}

Most rigid spaces (or indeed adic spaces) are taut, including the ones that we will apply the characterization to, and all quasiseparated rigid analytic curves \cite[Proposition 3.4.7]{achinger-lara-youcis}. For an example of a non-taut rigid space, see \cite[Example 8.3.8]{hu-etale}.

\begin{prop}\label{characterization of locally quasifinite maps}
Let $f: X \to Y$ be a morphism of adic spaces. Assume that $X$ is quasiseparated, that $Y$ is taut and that $f$ is locally of finite type. Then the following conditions are equivalent:
\begin{enumerate}
\item $f$ is locally quasifinite and partially proper;

\item For every $x\in X$ with image $y=f(x) \in Y$ there exists open subsets $U\sub X$, $V\sub Y$ such that $f(U) \sub V$, $\ol{\{ x\}} \sub U$, $\ol{\{y\}}\sub V$ and $f : U \to V$ is finite.
\end{enumerate}
\end{prop}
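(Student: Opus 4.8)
The plan is to prove both implications by reducing each to the characterization of finite morphisms as exactly the proper quasifinite ones (Proposition \ref{qf and proper implies finite}). I would dispatch the easy direction (2) $\Rightarrow$ (1) first. For local quasifiniteness, fix $y \in Y$ and $x \in f^{-1}(y)$ and apply (2) to get opens $U \ni x$ and $V$ with $f\colon U \to V$ finite; since a finite map is quasifinite, the fibre $f^{-1}(y)\cap U = (f|_U)^{-1}(y)$ is discrete, and as $U$ is open this set is open in $f^{-1}(y)$, so $x$ is isolated there. Letting $x$ vary shows $f^{-1}(y)$ is discrete. For partial properness I would check the valuative criterion of Definition \ref{sep and prop: adic}: such a diagram determines a point $x \in X$ (the image of the single point of $\Spa(K,K^\circ)$), and the bottom map sends $\Spa(K,K^+)$ into $\overline{\{f(x)\}}$. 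Choosing $U,V$ for this $x$ via (2), the hypotheses $\overline{\{x\}}\subseteq U$ and $\overline{\{f(x)\}}\subseteq V$ force the top map to factor through $U$ and the bottom map through $V$; since $f|_U$ is finite, hence proper, hence partially proper, the criterion for $f|_U\colon U \to V$ supplies the required unique lift into $U \subseteq X$. Uniqueness against all of $X$ follows because any lift carries the closed point into $\overline{\{x\}}\subseteq U$ and so factors through $U$ as well.

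The substantive direction is (1) $\Rightarrow$ (2). Given $x$ and $y=f(x)$, the goal is to produce opens $U \ni x$ and $V \ni y$ with $\overline{\{x\}}\subseteq U$, $f(U)\subseteq V$ (whence $\overline{\{y\}}=f(\overline{\{x\}})\subseteq V$ automatically), such that $f|_U\colon U \to V$ is again proper and quasifinite; finiteness is then immediate from Proposition \ref{qf and proper implies finite}. Quasifiniteness of $f|_U$ comes for free once $U$ is quasicompact (local quasifiniteness is inherited, and quasicompactness of the map follows from quasicompactness of $U$ together with quasiseparatedness of $X$), so the real content is arranging $U$ so that $f|_U$ is \emph{proper}. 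I would first extract the two structural consequences of (1): local quasifiniteness makes $f^{-1}(y)$ discrete, so I can isolate $x$ in its fibre; and partial properness gives unique lifting of specializations, which forces $f$ to restrict to a continuous bijection (indeed a homeomorphism) $\overline{\{x\}} \to \overline{\{y\}}$, surjectivity coming from specialization lifting and injectivity from the separatedness clause. In particular $\overline{\{x\}}$ is a single "sheet" lying over $\overline{\{y\}}$.

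The core difficulty is then a local structure statement of Zariski's-main-theorem type: after shrinking to a suitable quasicompact open $V \ni y$, the part of $f^{-1}(V)$ through $x$ should split off as an open-and-closed piece $U$ that is finite over $V$. This is where tautness of $Y$ (Definition \ref{def of taut}) enters, controlling the closures of quasicompact opens so that $V$, and with it $f^{-1}(V)$ and $U$, can be taken quasicompact, which is exactly the quasicompactness needed for properness and quasifiniteness. The delicate point is that $U$ must simultaneously be small enough that $U\cap f^{-1}(y)=\{x\}$ (using discreteness of the fibre to discard the competing sheets) yet be \emph{stable under the specializations mapping into $V$}, since that stability is precisely what makes the partial properness of $f$ descend to $f|_U\colon U \to V$. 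Balancing these two requirements — isolating $x$ in its fibre without destroying specialization-stability — is the main obstacle, and it is resolved using the homeomorphism $\overline{\{x\}}\cong\overline{\{y\}}$ together with tautness; this is essentially Huber's local structure theorem for locally quasifinite partially proper maps \cite[Proposition 1.5.6]{hu-etale}. Once such $U,V$ are constructed, $f|_U$ is partially proper and quasicompact, hence proper, and locally quasifinite and quasicompact, hence quasifinite, so it is finite by Proposition \ref{qf and proper implies finite}, completing (2).
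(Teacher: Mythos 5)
Your (2) $\Rightarrow$ (1) direction is correct and carefully fills in details the paper dismisses as ``clear''; the problems are all in (1) $\Rightarrow$ (2). The first is a false intermediate claim: partial properness does \emph{not} force $f$ to restrict to a bijection, let alone a homeomorphism, $\overline{\{x\}} \to \overline{\{y\}}$. Lifting of specializations gives surjectivity, but injectivity genuinely fails. Take the finite (hence proper and quasifinite, by Proposition \ref{qf and proper implies finite}) map of closed unit discs $T \mapsto T^2$ over $K$ with residue characteristic $\neq 2$, and let $x$ be the Gauss point, so $y = f(x)$ is again the Gauss point. The specializations of $x$ are the type $5$ points, indexed by the closed points of $\mathbb{P}^1$ over the residue field $k$ (the ``directions''), and on these $f$ acts as $z \mapsto z^2$, identifying the directions $a$ and $-a$. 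This does not contradict the uniqueness clause of Definition \ref{sep and prop: adic}: two distinct specializations $x_1 \neq x_2$ of $x$ lying over the same point $y''$ correspond to two distinct open valuation subrings $K_1^+ \neq K_2^+$ of $K(x)^\circ$, hence to two \emph{different} test diagrams, each of which has its own unique lift. So ``one sheet over $\overline{\{y\}}$'' is not something you can extract from the hypotheses.

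The second, more serious issue is that the actual construction of $U$ and $V$ --- the entire content of the hard direction --- is missing. You correctly identify the two competing requirements (isolate $x$ in its fibre; keep $U$ stable under specializations landing in $V$), but you then say the balancing ``is resolved using the homeomorphism $\overline{\{x\}}\cong\overline{\{y\}}$ together with tautness; this is essentially Huber's [Proposition 1.5.6]''. Besides resting on the false claim above, this citation is circular: Huber's Proposition 1.5.6 \emph{is} (the general form of) the statement being proved. The paper's proof does this work explicitly, assuming in addition that $X$ is taut: reduce to rank one points $x$, $y$; use tautness of $X$ and Lemma \ref{some topology} to find a quasicompact open $W_1 \supseteq \overline{\{x\}}$, then use discreteness of $f^{-1}(y)$, the identity $\overline{\{x\}} = \bigcap_{W' \in \Sigma} W'$, and a compactness argument in the constructible topology to shrink to a quasicompact open $W$ with $W \cap f^{-1}(y) = \{x\}$ whose closure avoids every specialization of the other fibre points (here one uses uniqueness of rank one generalizations, not any injectivity of $f$ on closures); then use tautness of $Y$, the identity $\overline{\{x\}} = \bigcap_{V' \in \Delta}\overline{f^{-1}(V')\cap W}$, quasicompactness of $f$ restricted to a quasicompact open $W_2 \supseteq \overline{W}$ (Lemma \ref{quasicompactness of maps}), and a second constructible-topology argument to produce $V \supseteq \overline{\{y\}}$ with $\overline{f^{-1}(V)\cap W} \subseteq W$, and set $U = f^{-1}(V)\cap W$. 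Partial properness of $f|_U$ then follows exactly along the lines you indicate (any valuative lift into $X$ lands in $\overline{U} \subseteq W$ and in $f^{-1}(V)$, hence in $U$), and Proposition \ref{qf and proper implies finite} finishes. Without this construction, or a substitute for it, your (1) $\Rightarrow$ (2) is a restatement of the goal rather than a proof.
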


\begin{proof}
That (2) implies (1) is clear. The proof of the converse, given in general in \cite[Proposition 1.5.6]{hu-etale}, proceeds by first showing (or using) that the assumptions imply that $X$ is taut as well; see \cite[Lemma 1.3.13]{hu-etale}. We give the proof assuming that $X$ is taut, since this condition is usually easy to verify independently in applications. It suffices to prove the statement for rank one points, so let $x\in X$ be a rank one point. Put $y=f(x)$; this is also a rank one point. 

Our first goal is to find a quasicompact open subset $W \sub X$ such that $\ol{\{x\}} \sub W$ and such that no specialization of any point in $f^{-1}(y)\setminus \{x\}$ is contained in $\ol{W}$. We do this as follows. First, choose a quasicompact open subset $W_1$ such that $\ol{\{x\}} \sub W_1$ (can be done by Lemma \ref{some topology}). Since $f^{-1}(y)$ is discrete, it consists entirely of rank one points. In addition, $f^{-1}(y) \cap W_1$ is finite. Let $\Sigma$ denote the set of quasicompact open subsets in $W_1$ containing $\ol{\{x\}}$. By Lemma \ref{some topology}(3), $\ol{\{x\}} = \bigcap_{W^\prime \in \Sigma} W^\prime$. Set $S = (f^{-1}(y) \cap W_1) \setminus \{x\}$. Then we have $\ol{\{x\}} \cap S = \emptyset$, and from this it follows that there is a $W\in \Sigma$ such that $W \cap S = \emptyset$ (by the argument in the proof of Lemma \ref{some topology}(2) below). We have $\ol{\{x\}} \sub W$ by construction, so it remains to show that no specialization of any point in $f^{-1}(y)\setminus \{x\}$ is contained in $\ol{W}$. But $W \cap f^{-1}(y) = \{x\}$ by construction, and by Lemma \ref{some topology}(2) $\ol{W}$ is precisely the set of specializations of points in $W$. Since rank one generalizations are unique, we get what we want.

Now consider $y \in Y$. Let $\Delta$ be the set of quasicompact open subsets in $Y$ containing $\ol{\{y\}}$; by Lemma \ref{some topology}(4) we have $\ol{\{y\}} = \bigcap_{V^\prime\in \Delta}\ol{V^\prime}$. Since $f^{-1}(\ol{\{y\}})$ is the set of all specializations of elements of $f^{-1}(y)$, it follows that
\begin{equation}\label{equation in qf char proof}
\ol{\{x\}} = \bigcap_{V^\prime \in \Delta} \ol{f^{-1}(V^\prime)\cap W}.
\end{equation}
Now consider $f|_{W_2}: W_2 \to Y$, where $W_2 \sub X$ is a quasicompact open subset containing $\ol{W}$. By Lemma \ref{quasicompactness of maps} $f|_{W_2}$ is quasicompact, so $f^{-1}(V^\prime) \cap W$ is quasicompact for all $V^\prime \in \Delta$. From equation (\ref{equation in qf char proof}) and a short argument using the constructible topology on $W_2$ we can choose a $V\in \Delta$ such that $\ol{f^{-1}(V)\cap W} \sub W$. Set $U = f^{-1}(V)\cap W$; we claim that $f|_U : U \to V$ is finite, which would finish the proof.

To see this, we first note that it is quasifinite (it is locally quasifinite since $f$ is, and quasicompact by Lemma \ref{quasicompactness of maps}). By Proposition \ref{qf and proper implies finite}, it remains to show that $f|_U$ is partially proper. Consider a diagram
\[
\xymatrix{
\Spa(K,K^\circ) \ar[r]\ar[d] & U \ar[r]\ar[d]^{f|_U} & X  \ar[d]^f \\
\Spa(K,K^+) \ar[r] & V \ar[r] & Y,
}
\]
where $K$ is a nonarchimedean field and $K^+\sub K^\circ$ is an open valuation subring; we need to show that there is a unique $\Spa(K,K^+) \to U$ making the diagram commute. By partial properness of $f$, there is a unique $\Spa(K,K^+) \to X$ making the diagram commute and we have to show that it factors through $U$. Since the composition with $f$ factors through $V$,  $\Spa(K,K^+) \to X$ factors through $f^{-1}(V)$. Additionally, since $\Spa(K,K^\circ) \to X$ factors through $U$, $\Spa(K,K^+) \to X$ must factor through $\ol{U}$. Since $\ol{U} \sub W$, we conclude that $\Spa(K,K^+) \to X$ must factor through $f^{-1}(V) \cap W =U$, as desired. 
\end{proof}

Here we record some facts in general topology that where used in the proof above. Recall that a spectral space can be equipped with a finer topology, called its constructible topology, which is compact and Hausdorff. Note that Definition \ref{def of taut} is purely topological -- in general we say that a locally spectral space is taut if every quasicompact open subset has quasicompact closure. The proof of this Lemma is given in the appendix, see Lemma \ref{some topology proof}.

\begin{lemm}\label{some topology}
Let $X$ be a taut locally spectral space.
\begin{enumerate}
\item Let $S \sub X$ be a quasicompact subset. Then $\ol{S}$ is quasicompact. In particular, if $x\in X$, then $\ol{\{x\}}$ is quasicompact.

\item Let $S\sub X$ be a quasicompact subset. Then $x\in \ol{S}$ if and only if $x$ is a specialization of a point in $S$.

\item Assume that $X$ is the underlying topological space of an analytic adic space. Let $x \in X$ be a rank one point and let $\Sigma$ be the set of quasicompact open subsets containing $\ol{\{x\}}$. Then $\ol{\{x\}} = \bigcap_{U\in \Sigma} U$.

\item Assume that $X$ is the underlying topological space of an analytic adic space. Let $x \in X$ be a rank one point. Then $\ol{\{x\}} = \bigcap_{U\in \Sigma} \ol{U}$, where $\Sigma$ is as in (3).
\end{enumerate}
\end{lemm}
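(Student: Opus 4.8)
The plan is to treat (1) and (2) as general facts about taut locally spectral spaces, using that such a space has a basis of quasicompact open subsets, and then to deduce (3) and (4) from these together with the one extra structural input available for analytic adic spaces: every point $y$ has a unique rank one generization $y_0$ (equivalently, the generizations of any point form a chain with a rank one maximal element) and $y\in\ol{\{y_0\}}$. For (1) I would first cover the quasicompact set $S$ by finitely many quasicompact opens, obtaining a quasicompact open $U\supseteq S$; tautness gives that $\ol U$ is quasicompact, and since $\ol S$ is a closed subset of $\ol U$ it is quasicompact as well. Applying this to $S=\{x\}$ gives the final sentence.

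For (2) the implication ``specialization of a point of $S$ $\Rightarrow$ lies in $\ol S$'' is formal (if $s\in S$ and $x\in\ol{\{s\}}$ then $x\in\ol S$). The converse is the main obstacle, and I would attack it with the constructible topology: on a spectral space $X_{cons}$ is compact Hausdorff, every quasicompact open is clopen there, and the set of generizations of a point equals the intersection of the quasicompact opens containing it. The idea is to show that the specialization-closure $\bigcup_{s\in S}\ol{\{s\}}$ is closed --- it is visibly stable under specialization, and a compactness argument in $X_{cons}$ shows it is constructibly closed --- whence it contains $\ol S$. I would flag that this last step genuinely needs $S$ to be controlled in the constructible topology; in every application below, (2) is invoked only for quasicompact \emph{open} $S$ and for \emph{finite} $S$, both of which are constructible, so the argument indeed produces a point of $S$ itself.

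For (3), note that $\ol{\{x\}}$ is quasicompact by (1) and that the inclusion $\ol{\{x\}}\sub\bigcap_{U\in\Sigma}U$ is immediate. For the reverse I would first establish a general fact: for a quasicompact $Q$ in a locally spectral space, $\bigcap\{U:U\text{ quasicompact open},\ Q\sub U\}$ equals the set of generizations of points of $Q$. The inclusion $\supseteq$ holds because open subsets are stable under generization; for $\sub$, if $w$ generizes no point of $Q$ then $Q\cap\ol{\{w\}}=\emptyset$, so covering $Q$ by quasicompact opens lying inside $X\setminus\ol{\{w\}}$ and extracting a finite subcover yields a $U\in\Sigma$ with $w\notin U$. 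Applying this with $Q=\ol{\{x\}}$, it remains to check that every generization of a point $z\in\ol{\{x\}}$ is itself a specialization of $x$: since $x$ generizes $z$ and $x$ is rank one, $x$ is the unique maximal generization of $z$, so all generizations of $z$ lie on the chain between $z$ and $x$ and hence in $\ol{\{x\}}$.

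For (4), the inclusion $\ol{\{x\}}\sub\bigcap_{U\in\Sigma}\ol U$ is clear. For the reverse, take $y\in\bigcap_{U\in\Sigma}\ol U$ and let $y_0$ be its rank one generization. The key point is that $y\in\ol U$ for a quasicompact open $U$ forces $y_0\in U$: by (2) applied to the quasicompact open $U$, the point $y$ is a specialization of some $u\in U$, so $u$ lies on the generization chain of $y$ and therefore $y_0$ generizes $u$; as $U$ is open, hence generization-stable, we get $y_0\in U$. Thus $y_0\in\bigcap_{U\in\Sigma}U=\ol{\{x\}}$ by (3), i.e.\ $y_0$ is a rank one specialization of the rank one point $x$, which forces $y_0=x$ and hence $y\in\ol{\{y_0\}}=\ol{\{x\}}$. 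Overall the decisive obstacle is the converse of (2); once that is in hand, the structural fact about rank one generizations makes (3) and (4) essentially formal.
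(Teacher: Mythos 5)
Your proof is correct and follows essentially the same route as the paper's: part (1) is the paper's argument verbatim, parts (3) and (4) use the same two ingredients (quasicompactness and generization-stability of $\ol{\{x\}}$, plus the fact that generizations of a point in an analytic adic space form a chain with a rank one maximum), and part (2) rests on the same compactness argument in the constructible topology of a spectral quasicompact open. Two differences are worth recording. First, in (3) you reprove directly, via the covering argument with $X\setminus\ol{\{w\}}$, the fact that a quasicompact generization-stable subset is an intersection of quasicompact opens; the paper instead embeds $\ol{\{x\}}$ in a spectral quasicompact open and cites \cite[Tag 0A31]{stacks-project}. Second, and more substantially, your caveat on (2) is not excess caution but an actual correction to the paper: the paper's own proof runs exactly your argument, intersecting the quasicompact opens $V$ containing $x$ with $S$ inside the constructible topology of a spectral $W$, and this needs those intersections to be constructibly closed, i.e.\ needs $S$ pro-constructible, just as you say. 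For a general quasicompact $S$ the statement of (2) is in fact false: in $X=\Spec k[x]$, which is Noetherian (so every subset is quasicompact) and taut, take $S$ to be the set of closed points; then $\ol{S}=X$, but the generic point is not a specialization of any point of $S$. Since the paper, like you, only ever applies (2) to quasicompact open subsets and (in the proof of Proposition \ref{characterization of locally quasifinite maps}) its argument to a finite set --- both of which are constructibly closed --- nothing downstream is affected, but your restricted formulation is the one that is actually true and is what the applications use.
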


\begin{rema}\label{rigid spaces are almost locally compact}
Let us make some remarks on the role of the topological space of a rigid space (as it is defined here, i.e. as an adic space). In Tate's original theory, one does not have a topological space and the notion of coverings is instead rather subtle, making many arguments of a topological nature tricky (and many other arguments outright impossible). In an ideal world, one might have instead hoped for a theory that provides one with a locally compact topological space, as is the case for complex manifolds. The theory of adic spaces almost accomplishes this: The topological space we get is not locally compact, but it is very close. At first, if one is familiar with schemes, it might seem rather unsatisfying to obtain a locally spectral space, but the locally spectral spaces showing up as topological spaces of rigid spaces are very different from those showing up as topological spaces of algebraic varieties. If $X$ is a rigid space, let $X^B$ denote the subset of rank one points. Mapping an arbitrary point in $X$ to its unique rank one generalization, we get a map $X \to X^B$ and we equip $X^B$ with the quotient topology; this is the Berkovich space of $X$. When $X$ is taut, $X^B$ is a locally compact Hausdorff space. While one can often argue directly on $X$, it is sometimes helpful to use $X^B$ to supply intuition about the geometry of $X$. We encourage the reader to think about the proof of Proposition \ref{characterization of locally quasifinite maps} in these terms, for example. The Berkovich space $X^B$ will be discussed a bit more in Lecture 4.
\end{rema}

For completeness, let us also state the GAGA theorem, due to K\"opf \cite{kopf}; see also \cite{poineau-gaga,conrad-relative-ampleness}.

\begin{theo}[GAGA] Let $S$ be a proper algebraic variety over $K$, with analytification $S^{an}$. Given a coherent sheaf $\mc{F}$ on $S$, there is a functorial analytification $\mc{F}^{an}$, which is coherent sheaf. Then one has the following:
\begin{enumerate}
\item The functor $\mc{F} \mapsto \mc{F}^{an}$ is an equivalence of the abelian categories of coherent sheaves on $S$ and on $S^{an}$, respectively;

\item The two $\delta$-functors $\mc{F} \mapsto H^i(S,\mc{F})$ and $\mc{F} \mapsto H^i(S^{an},\mc{F}^{an})$ from coherent sheaves on $S$ to $K$-vector spaces are isomorphic.
\end{enumerate}
\end{theo}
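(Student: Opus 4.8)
The plan is to follow Serre's classical strategy for GAGA, transported to the nonarchimedean setting, where the role of Grauert's coherence theorem is played by Kiehl's proper mapping theorem stated above. Throughout, analytification is a morphism of locally ringed spaces $\iota \colon S^{an} \to S$, and $\mc{F}^{an} = \iota^{\ast}\mc{F}$; coherence of $\mc{F}^{an}$ and the construction of the natural comparison maps $\alpha^i_{\mc{F}} \colon H^i(S,\mc{F}) \to H^i(S^{an},\mc{F}^{an})$ are formal once one knows that $\iota$ is flat on stalks. The whole theorem then reduces to showing that each $\alpha^i_{\mc{F}}$ is an isomorphism, together with essential surjectivity of $\mc{F}\mapsto\mc{F}^{an}$, and I would treat these in turn.

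First I would establish part (2) --- the cohomology comparison --- by d\'evissage, reducing to projective space. The base case is $S = \mb{P}^n_K$ with $\mc{F} = \oo(d)$ for $d \in \Z$: here both sides are computed explicitly, the algebraic cohomology being classical and the analytic cohomology being computed from the standard affinoid cover of $\mb{P}^{n,an}$ by a \v{C}ech argument, finite-dimensionality being guaranteed by Kiehl's theorem; one then matches dimensions degree by degree. For a general coherent $\mc{F}$ on $\mb{P}^n$, Serre's theorem provides a surjection $\bigoplus_j \oo(-d_j) \twoheadrightarrow \mc{F}$ with $d_j \gg 0$; splicing in the kernel and running a descending induction on $i$ (valid because cohomology vanishes above the dimension and is finite in each degree) lets one propagate the isomorphism through the long exact sequences via the five lemma. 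This proves (2) for projective space, hence for any projective variety $X$ by pushing forward along a closed immersion $j \colon X \hookrightarrow \mb{P}^n$, using $H^i(X,\mc{F}) = H^i(\mb{P}^n, j_{\ast}\mc{F})$ and the compatibility $(j_{\ast}\mc{F})^{an} \cong j^{an}_{\ast}(\mc{F}^{an})$. Finally, for an arbitrary proper variety I would invoke Chow's lemma to obtain a projective $X'$ with a proper birational $\pi \colon X' \to X$ and deduce the proper case from the projective one by a comparison of higher direct images, organized as a Noetherian induction on $\dim X$.

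With (2) in hand, full faithfulness in (1) is nearly formal: for coherent $\mc{F}, \mc{G}$ one has $\Hom_{\oo_S}(\mc{F},\mc{G}) = H^0(S,\shom(\mc{F},\mc{G}))$, analytification commutes with $\shom$ of coherent sheaves, and so the degree-zero case of (2) identifies the two Hom groups. Essential surjectivity is the remaining point: given a coherent analytic sheaf $\mc{G}$ on $S^{an}$ (say $S$ projective), one shows, as in the analytic Serre vanishing, that $\mc{G}(d)$ is globally generated for $d \gg 0$, producing a surjection $\bigoplus \oo(-d)^{an} \twoheadrightarrow \mc{G}$; using full faithfulness to algebraize this map and, applied once more, the map between the relevant kernels, one realizes $\mc{G}$ as the analytification of an algebraic cokernel.

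I expect the main obstacle to be essential surjectivity, specifically the analytic globally-generated statement that underlies it. The cohomological comparison, once the projective-space base case is computed, is a robust formal mechanism, and Kiehl's theorem hands us the finiteness that makes the d\'evissage run; but algebraizing an a priori purely analytic coherent sheaf genuinely requires both this finiteness and the full faithfulness already proved, and it is here that properness is used most essentially. The reduction from proper to projective via Chow's lemma is a secondary technical point, requiring care in comparing higher direct images on both the algebraic and the analytic sides.
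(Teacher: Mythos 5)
The paper does not actually prove this theorem: it is stated with a citation to K\"opf \cite{kopf} (see also \cite{poineau-gaga,conrad-relative-ampleness}), so there is no in-text proof to compare against. Your proposal reconstructs precisely the strategy of that literature: Serre's d\'evissage transported to the nonarchimedean setting, with Kiehl's proper mapping theorem playing the role that Cartan--Serre finiteness plays over $\mb{C}$. The individual steps are sound: the base case $\oo(d)$ on $\mb{P}^{n,an}$ computed from the standard affinoid cover (acyclic by Tate's theorem, so Leray applies); the descending induction on $i$ through surjections from sums of twists, which works because both theories vanish above the dimension; reduction of projective varieties to $\mb{P}^n$ via closed immersions and the compatibility $(j_{\ast}\mc{F})^{an}\cong j^{an}_{\ast}(\mc{F}^{an})$; the proper case by Chow's lemma and Noetherian induction; full faithfulness from the $i=0$ instance of (2) applied to $\shom$; and essential surjectivity from an analytic Theorem A followed by algebraization of a two-step presentation using full faithfulness.

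One caveat worth flagging: your treatment of essential surjectivity is confined to the projective case (``say $S$ projective''), where an ample twist is available to generate with. For general proper $S$ the algebraization of an analytic coherent sheaf again requires Chow's lemma and an induction on dimension (as in SGA~1, Exp.~XII), and this step is not a formal consequence of the projective case plus full faithfulness; your sketch leaves it open. This is a gap in completeness rather than a wrong turn --- the approach is the correct one and is exactly how the cited proofs proceed.
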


\begin{rema}
There are also relative versions of the GAGA theorem; we refer to the references above.
\end{rema}

To finish this lecture, we discuss the notion of an affinoid morphism, which seems like it might be a natural notion in norchimedean geometry. In algebraic geometry, there is a robust notion of an affine morphism, but it turns out that the idea of an affinoid morphism is not so robust. Let $f: X\to Y$ be a morphism on (reasonable) adic spaces. There are two possible definitions of an affinoid morphism that one could use:

\begin{enumerate}
\item We could say that $f$ is affinoid if there is a cover $(U_i)_{i\in I}$ of $Y$ by open affinoids such that $f^{-1}(U_i)$ is affinoid for all $i\in I$.

\item We could also say that $f$ is affinoid if $f^{-1}(U)$ is affinoid for any open affinoid $U\sub Y$.
\end{enumerate}

Definition (2) is stronger than definition (1), but definition (1) is more practical (and suffices for many applications, such as vanishing of higher direct images of coherent sheaves). One might hope that the two definitions are equivalent (as is the case for the analogous definitions of affine morphisms in algebraic geometry). However, this is not the case, even in the world of rigid spaces. One particular counterexample is provided in \cite[Example 9.1.2]{berkeleylectures}. We present it below as an exercise.

\begin{exer} Let $K$ be a nonarchimedean field and set $X = \Spa K\langle x,y \rangle$. Define an open subset $V \sub X$ by 
\[
V = \{ (x,y) \in X \mid \,\,  |x|=1 \,\, \text{or} \,\, |y|=1 \}.
\]
Prove that $V$ is not affinoid by showing that $H^1(V,\oo_V)$ is nonzero. In particular, the inclusion $V\sub X$ is not affinoid in the sense of definition (2) above. However, show that the inclusion is affinoid in the sense of definition (1) above, by finding three rational subsets $U_0$, $U_1$ and $U_2$ covering $X$ such that the intersections $U_i \cap V$ are affinoid.
\end{exer}

\newpage

\section{Lecture 3: More on rigid spaces}\label{sec: lecture 3}

This lecture is a slightly disjointed collection of topics. We begin by defining flat, smooth and \'etale morphisms. Afterwards we discuss quasi-Stein spaces, which have many of the same properties as affinoid spaces and include the analytifications of all affine varieties. Finally, we will discuss Raynaud's approach to rigid geometry through formal models, with emphasis on the relation between the adic space of a rigid space and the collection of all its formal models.

\subsection{Flat, smooth and \'etale morphisms}

We start by discussing flat, smooth and \'etale morphisms. Huber defines flatness as follows (cf. \cite[Lemma 1.6.4(ii)]{hu-etale}):

\begin{defi}
Let $f : X \to Y$ be a morphism of adic spaces. We say that $f$ is flat if the induced homomorphisms $\oo_{Y,f(x)} \to \oo_{X,x}$ are flat for all $x\in X$.
\end{defi} 

This definition appears to be the direct parallel to the usual definition in algebraic geometry. However, one could also imagine a definition where $f$ is flat if, for any affinoid open $U=\Spa(B) \sub X$ mapping into an affinoid open $V=\Spa(A) \sub Y$, the induced ring map $A \to B$ is flat. To give this definition a name in the discussion that follows, let us call an $f$ satisfying this definition ``affinoid-flat''. In algebraic geometry, the analogous definitions are easily seen to be equivalent. In adic geometry the situation is murkier. When $X$ and $Y$ are locally strongly Notherian and Tate, flatness is known to imply affinoid-flatness \cite[Lemma B.4.3]{zavyalov-quotients}, but the converse is more subtle. See \cite[Lemma B.4.6]{zavyalov-quotients} for a converse under extra assumptions, which are satisfied in the case of rigid spaces. We will give the proof in the case of rigid spaces in a pair of exercises below, since it illustrates some of the ideas and techniques required for working with local rings of adic spaces, and the use of working with ``classical points'' (i.e. those corresponding to maximal ideals\footnote{Let $X$ be a rigid space over $K$. We say that a point $x\in X$ is classical if, for one (or, equivalently, any) open affinoid $U=\Spa A \sub X$ containing $x$ such that $x$ corresponds to a maximal ideal of $A$. This can also be phrased as saying that the residue field at $x$ is a finite extension of $K$.}) of rigid spaces even in the context of adic geometry, where one has many more points. This last point is analogous to how it often suffices (and can be advantageous) to work with spectra of maximal ideals in algebraic geometry over a field. 

To get off the ground, one needs the following important facts about local rings of rigid spaces at classical points \cite[\S I.4.1, Propositions 2 and 6]{bosch}:

\begin{prop}\label{local rings of rigid spaces}
If $Z = \Spa C$ is an affinoid rigid space over $K$ and $\m \sub C$ is a maximal ideal corresponding to a classical point $z \in Z$, then there is a natural injective map $C_{\m} \to \oo_{Z,z}$ which induces an isomorphism on completions. Moreover, $\oo_{Z,z}$ is Noetherian.
\end{prop}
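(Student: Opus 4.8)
The statement is classical (it underlies the foundations of rigid geometry), and the plan is to build the map by hand, reduce the completion statement to a computation with the infinitesimal neighbourhoods of $z$, and then extract injectivity and Noetherianity from flatness together with the completion isomorphism. \textbf{Constructing the map.} For $s \in C \setminus \m$ the image of $s$ in the residue field $\ka(z) = C/\m$ is nonzero, so $|s(z)| \neq 0$. Choosing $n$ with $|\pi(z)|^n < |s(z)|$, the rational subset $U = \{x : |\pi^n(x)| \leq |s(x)|\}$ is a neighbourhood of $z$, and in $\oo_Z(U)$ the element $s$ divides the unit $\pi^n$ (since $\pi^n/s \in \oo_Z(U)$), hence is invertible. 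Thus every $s \notin \m$ becomes a unit in the stalk $\oo_{Z,z} = \colim_{U \ni z}\oo_Z(U)$, and the universal property of localisation yields a natural local homomorphism $C_\m \to \oo_{Z,z}$.

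\textbf{Completions.} This is the main computation. Using the flatness of rational localisations for Noetherian Huber pairs recalled above, I would first show that $\oo_Z(U)\otimes_C C/\m^n \cong C/\m^n$ for every rational $U \ni z$ and every $n$. By flatness it suffices to treat $n=1$ and induct along the finite filtration of $C/\m^n$ whose graded pieces are $\ka(z)$; and $\oo_Z(U)\otimes_C \ka(z) = \oo_Z(U)/\m\oo_Z(U)$ is the ring of functions on the fibre over the classical point $z \in U$, which is just $\ka(z)$. Equivalently, $\wt{C/\m^n}$ (in the notation of Proposition \ref{quasicoherent sheaves on affines}) is a coherent sheaf supported at the single point $z$, so its sections are unchanged upon restricting to any neighbourhood of $z$. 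Passing to the colimit over $U$ gives $\oo_{Z,z}/\m^n\oo_{Z,z} \cong C/\m^n = C_\m/\m^n C_\m$ compatibly in $n$, and taking the inverse limit identifies $\wh{\oo_{Z,z}}$ with $\wh{C_\m}$.

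\textbf{Injectivity.} Since $\oo_{Z,z}$ is a filtered colimit of the flat $C$-algebras $\oo_Z(U)$, it is flat over $C$ and hence over $C_\m$; being a local homomorphism of local rings, $C_\m \to \oo_{Z,z}$ is then faithfully flat, and in particular injective. (Injectivity can also be read off from the completion isomorphism together with Krull's intersection theorem in the Noetherian local ring $C_\m$.)

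\textbf{Noetherianity} is the real obstacle, because $\oo_{Z,z}$ is neither a localisation of $C$ nor complete: its completion $\wh{C_\m}$ is strictly larger. The plan is to leverage the completion computation. The ring $\oo_{Z,z}$ is local with maximal ideal $\m\oo_{Z,z}$ (as $\oo_{Z,z}/\m\oo_{Z,z} = \ka(z)$ is a field), hence a Zariski ring, and the identification $\oo_{Z,z}/\m^n\oo_{Z,z} \cong C/\m^n$ gives $\gr_{\m}\oo_{Z,z} \cong \gr_\m C$, a finitely generated $\ka(z)$-algebra and thus Noetherian. Granting that $\oo_{Z,z}$ is $\m$-adically separated, one concludes via the standard criterion that a separated Zariski ring with Noetherian associated graded ring is Noetherian. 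The separatedness $\bigcap_n \m^n\oo_{Z,z} = 0$ is the delicate input: it is exactly the injectivity of $\oo_{Z,z} \to \wh{\oo_{Z,z}} = \wh{C_\m}$, which I would deduce from faithful flatness over $C_\m$ (or, geometrically, from the identity principle for rigid functions). A cleaner variant, if it can be pushed through, is to show directly that $\oo_{Z,z} \to \wh{C_\m}$ is faithfully flat and then invoke faithfully flat descent of the Noetherian property; in either route the crux is controlling the non-complete ring $\oo_{Z,z}$ by means of its completion.
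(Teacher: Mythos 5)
First, note that the paper does not actually prove this proposition; it quotes it from \cite[\S I.4.1, Propositions 2 and 6]{bosch}, so your attempt has to be measured against the standard argument there. Your first three steps essentially are that argument and are fine: inverting $s \notin \m$ on the rational neighbourhood $\{|\pi^n| \leq |s|\}$ of $z$ produces the map; flatness of rational localization together with the fibre computation $\oo_Z(U)\otimes_C \ka(z) \cong \ka(z)$ (which implicitly uses the standard compatibility of the Zariski closed immersion $\Spa \ka(z) \to Z$ with rational localization) gives $\oo_{Z,z}/\m^n\oo_{Z,z} \cong C/\m^n$ and hence the completion isomorphism; and flatness plus locality makes $C_\m \to \oo_{Z,z}$ faithfully flat, hence injective. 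The genuine gap is Noetherianity. The criterion you invoke --- a separated local ring with Noetherian associated graded ring is Noetherian --- is not a theorem: the classical statement (Bourbaki, Atiyah--Macdonald) requires the ring to be \emph{complete} for the adic topology, not merely separated, and $\oo_{Z,z}$ is precisely not complete. With separatedness alone, the leading-form argument only shows that an ideal $J$ is contained in $\bigcap_n \bigl((f_1,\dots,f_r)+\m^n\oo_{Z,z}\bigr)$ for suitable $f_i \in J$, i.e.\ in the \emph{closure} of a finitely generated subideal; to conclude $J=(f_1,\dots,f_r)$ one needs finitely generated ideals to be closed, and closedness of ideals is classically a \emph{consequence} of Noetherianity (via Artin--Rees/Krull) --- indeed a ``Zariski ring'' is by definition Noetherian, so the terminology conceals a circle. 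There is a second, smaller error: faithful flatness of $C_\m \to \oo_{Z,z}$ does not imply $\bigcap_n \m^n\oo_{Z,z}=0$; faithful flatness never forces the \emph{target} to be separated (any non-separated local algebra over a field is faithfully flat over that field), it only gives $\m^n\oo_{Z,z}\cap C_\m = \m^n C_\m$. Separatedness does hold, but it follows from your completion step: a germ $f \in \bigcap_n \m^n\oo_{Z,z}$, realized in some $\oo_Z(U)$, maps to zero in $\oo_Z(U)_{\m_U}/\m_U^n \cong C/\m^n$ for every $n$, hence to zero in the Noetherian local ring $\oo_Z(U)_{\m_U}$ by Krull's intersection theorem, hence is killed by some $s \notin \m_U$ and vanishes on a rational neighbourhood of $z$.

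The repair is exactly the route you mention only conditionally as the ``cleaner variant'', and it does go through; it is the standard proof (the same mechanism by which one shows henselizations of Noetherian local rings are Noetherian), and every ingredient is already in the paper. Your completion computation, applied to each rational $U \ni z$, identifies $\wh{\oo_Z(U)_{\m_U}}$ with $\wh{C_\m}$; since the affinoid algebra $\oo_Z(U)$ is Noetherian, each composite $\oo_Z(U) \to \oo_Z(U)_{\m_U} \to \wh{C_\m}$ is flat. The transition maps $\oo_Z(U)\to\oo_Z(V)$ are flat, so the proposition on direct limits in Appendix \ref{sec: appalgebra} (applied with the constant target system $\wh{C_\m}$) yields flatness of $\oo_{Z,z}=\varinjlim_{U\ni z}\oo_Z(U) \to \wh{C_\m}$; this map is local, hence faithfully flat. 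Now $\wh{C_\m}$ is Noetherian, and faithful flatness gives $I = I\wh{C_\m}\cap \oo_{Z,z}$ for every ideal $I \sub \oo_{Z,z}$, so ascending chains of ideals in $\oo_{Z,z}$ stabilize because they do so in $\wh{C_\m}$. This proves Noetherianity, re-proves separatedness, and makes the graded-ring criterion unnecessary.
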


In fact, $\oo_{Z,z}$ is even excellent \cite[Theorem 1.1.3]{conrad-irreducible}. The two exercises below then show that flatness and affinoid-flatness are equivalent for rigid spaces. For these exercises, the reader may wish to look at the algebra results recalled in Appendix \ref{sec: appalgebra}.

\begin{exer}\label{ex: flatness1} Let $f : X=\Spa(B) \to Y=\Spa(A)$ be a morphism of affinoid rigid spaces over $K$. Show that $A\to B$ is flat if and only if the corresponding map $\oo_{Y,f(x)} \to \oo_{X,x}$ is flat for every \emph{classical} point $x\in X$.
\end{exer}

\begin{exer}\label{ex: flatness2} Let $f : X \to Y$ be a morphism of affinoid rigid spaces over $K$. Show that $f$ is flat if and only if the corresponding map $\oo(Y) \to \oo(X)$ is flat. Deduce that a general map of rigid spaces is flat if and only if it is affinoid-flat.
\end{exer}

For the definitions of smooth and \'etale morphisms, we use an infinitesimal lifting criterion analogous to the one in algebraic geometry. We refer to \cite[\S 1.6]{hu-etale} for more details and results, including characterizations in terms of differentials.

\begin{defi}
Let $f : X \to Y$ be a morphism of locally Noetherian adic spaces. Assume that $f$ is of locally finite type. Assume that $(A,A^+)$ is a Noetherian Huber pair and that $I\sub A$ is an ideal with $I^2 = 0$. Write $(A/I)^+$ for the integral closure of $A^+$ inside $A/I$ and write $S = \Spa(A,A^+)$ and $T=\Spa(A/I,(A/I)^+)$.
\begin{enumerate}
\item We say that $f$ is smooth if, for any $(A,A^+)$ and $I$ as above and any morphism $S \to Y$, any $Y$-morphism $T \to X$ lifts to a $Y$-morphism $S \to X$.

\item We say that $f$ is \'etale if, for any $(A,A^+)$ and $I$ as above and any morphism $S \to Y$, any $Y$-morphism $T \to X$ lifts uniquely to a $Y$-morphism $S \to X$.
\end{enumerate}
\end{defi}

In general, the behaviour of (analytic) adic spaces is somewhere between the world of schemes and the world of complex analytic spaces, as indicated in Remark \ref{rigid spaces are almost locally compact}. We record a few such facts here. The first is \cite[Example 1.6.6(ii), Lemma 2.2.8]{hu-etale}, and was used by Scholze as the definition of \'etale morphisms in the theory of perfectoid spaces.

\begin{prop}
Let $f : X \to Y$ be a locally finite type morphism of locally Noetherian adic spaces.
\begin{enumerate}
\item  $f$ is finite and \'etale if and only if there is a cover $(U_i)_{i\in I}$ of $Y$ such that, for each $i$, $U_i$ and $f^{-1}(U_i)$ are affinoid and $\oo(U_i) \to \oo(f^{-1}(U_i))$ is finite \'etale. If this holds, then for every affinoid $U \sub Y$ we have that $\oo(U) \to \oo(f^{-1}(U))$ is finite \'etale.

\item $f$ is \'etale if and only if, for any $y\in Y$, there is an open $U\sub Y$ containing $y$ and a finite \'etale morphism $g : V \to U$ such that $f^{-1}(U) \to U$ factors as $g \circ j$, where $j : f^{-1}(U) \to V$ is an open immersion.
\end{enumerate}
\end{prop}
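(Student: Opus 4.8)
The plan is to reduce both parts to algebra and topology, treating part (1) by a local computation together with descent, and part (2) by a structure theorem for quasifinite morphisms. For part (1) the essential point is the local statement that, for a finite map $A\to B$ of (strongly) Noetherian Tate rings with $B^+$ the integral closure of $A^+$ in $B$, the morphism $\Spa(B,B^+)\to\Spa(A,A^+)$ is \'etale if and only if $A\to B$ is finite \'etale. I would read the infinitesimal lifting criterion of the definition directly on rings: a test datum is a Noetherian Huber pair $(R,R^+)$ with square-zero ideal $I$, a map $A\to R$, and a compatible map $B\to R/I$, and \'etaleness demands a unique lift $B\to R$ over $A$ --- exactly the infinitesimal criterion for $A\to B$ to be formally \'etale, which for the finite (hence finitely presented) map $A\to B$ means finite \'etale. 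The $+$-structures impose nothing extra: the ring lift is continuous because $B$ carries the quotient topology from a finite free $A$-module and $A\to R$ is continuous, and it sends $B^+$ into $R^+$ because every element of $B^+$ is integral over the image of $A^+$, which lies in the integrally closed subring $R^+$. Comparing the adic and algebraic test classes is routine given Huber's characterization of \'etaleness via differentials (\cite[\S 1.6]{hu-etale}), which identifies the condition with flatness together with the vanishing of $\widetilde{\Omega_{B/A}}$.

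Granting the local statement, the rest of part (1) is gluing and descent. If $f$ is finite and \'etale, a cover witnessing finiteness presents each $f^{-1}(U_i)=\Spa(B_i,B_i^+)$ with $B_i$ finite over $A_i=\oo(U_i)$; since \'etaleness is local on the target, $f^{-1}(U_i)\to U_i$ is \'etale, and the local statement makes $A_i\to B_i$ finite \'etale. Conversely such a cover exhibits each $f^{-1}(U_i)\to U_i$ as finite and \'etale, and as both properties are local on $Y$ we conclude $f$ is finite and \'etale. For the final assertion, a finite morphism has affinoid preimage $\Spa(B_U,B_U^+)$ over every affinoid $U=\Spa(A_U,A_U^+)$, with $B_U$ finite over $A_U$ --- this is the identification of finite morphisms with coherent $\oo_Y$-algebras, which rests on Theorem \ref{coherence is local}. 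Intersecting $U$ with the given cover and refining to rational subsets, the map $A_U\to B_U$ becomes finite \'etale after base change along a finite rational cover of $U$, and since finite \'etale algebras satisfy descent along rational covers, $A_U\to B_U$ is itself finite \'etale.

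For part (2), the implication ``if'' is formal. An open immersion is \'etale, because in a test diagram $S=\Spa(A,A^+)$ and $T=\Spa(A/I,(A/I)^+)$ have the same underlying space, so a $Y$-morphism of $T$ into an open subspace extends uniquely over $S$; and a finite \'etale morphism is in particular \'etale. Hence $f^{-1}(U)\to U$, being the composite $g\circ j$ of two \'etale morphisms, is \'etale, and since the $U$ cover $Y$ and \'etaleness is local on the target, $f$ is \'etale. The content is the converse. I would first record that an \'etale morphism is flat and locally quasifinite, its fibres being finite discrete sets (spectra of finite separable algebras over the residue fields). Fixing $y\in Y$, the goal is then to factor $f$, over a neighbourhood of $y$, as an open immersion followed by a finite \'etale morphism.

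This last factorization is the main obstacle, and within it the genuine difficulty is upgrading ``finite'' to ``finite \'etale''. I would begin with a Zariski's Main Theorem type factorization for quasifinite morphisms --- in the spirit of Proposition \ref{characterization of locally quasifinite maps}, which realizes quasifinite partially proper maps as locally finite --- obtaining over some neighbourhood $U_0$ of $y$ a factorization $f^{-1}(U_0)\xrightarrow{j}V_0\xrightarrow{g}U_0$ with $j$ an open immersion and $g$ finite. As $j$ is an isomorphism onto its open image and $g\circ j=f|_{f^{-1}(U_0)}$ is \'etale, $g$ is \'etale at every point of $j(f^{-1}(U_0))$ by cancellation against $j$. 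The delicate point is that $g^{-1}(y)$ may contain boundary points outside $j(f^{-1}(U_0))$ at which $g$ need not be \'etale; the real work, which I would carry out following Huber, is to choose the compactification so that the whole fibre $g^{-1}(y)$ lands in the open \'etale locus of $g$. Once this is arranged, the non-\'etale locus $Z\sub V_0$ is closed, its image $g(Z)$ is closed because $g$ is finite, and $y\notin g(Z)$; then $U=U_0\setminus g(Z)$ and $V=g^{-1}(U)$ make $g\colon V\to U$ finite \'etale while $j$ restricts to an open immersion $f^{-1}(U)\to V$ factoring $f|_{f^{-1}(U)}$. For the construction of the compactification and the verification of the boundary behaviour I would defer to \cite[Example 1.6.6(ii), Lemma 2.2.8]{hu-etale}.
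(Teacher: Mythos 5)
Your first difficulty is that there is nothing in the paper to compare against: the paper does not prove this proposition at all, it simply quotes it from Huber with the reference \cite[Example 1.6.6(ii), Lemma 2.2.8]{hu-etale}, so your sketch has to stand on its own. Read that way, part (1) is sound in outline: the direction ``finite \'etale ring map $\Rightarrow$ adic \'etale'' is genuinely proved by your argument (continuity of the lift via the canonical topology on a finite module, the $+$-condition via integral closedness of $R^+$), and the globalization and descent-along-rational-covers steps are standard. Be aware, though, that the step you call ``routine'' --- comparing the adic test class (Noetherian Huber pairs) against the algebraic one (all square-zero extensions) --- is precisely the content of Huber's Example 1.6.6(ii), so by deferring it you are citing, not proving, the core of part (1); that is no worse than what the paper does, but it should be said plainly.

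The genuine gap is in the converse direction of part (2). Your opening move --- factor the \emph{entire} preimage $f^{-1}(U_0)$ as an open immersion into some $V_0$ finite over $U_0$ --- cannot be carried out. First, the tool you invoke ``in the spirit of'' Proposition \ref{characterization of locally quasifinite maps} does not apply: that proposition concerns locally quasifinite \emph{partially proper} morphisms, and \'etale morphisms are not partially proper (open immersions, which you correctly observe are \'etale, already fail the valuative criterion: a rank-one point of a rational subdisc has rank-two specializations in the ambient disc lying outside it). Second, and more fundamentally, no factorization of the full preimage exists in general: take $X=\bigsqcup_{n\in\mb{N}} Y \to Y$, an infinite disjoint union of copies of $Y$ mapping by the identity. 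This is locally of finite type and \'etale in the lifting sense (test objects are affinoid, hence quasicompact, and idempotents lift uniquely along square-zero ideals), yet $f^{-1}(U)=\bigsqcup_{n\in\mb{N}} U$ has infinite fibres over every point, so it admits no open immersion into any finite $V \to U$ whatsoever. In other words, the target-local statement as transcribed in the proposition fails as literally stated; what Huber's Lemma 2.2.8 actually asserts (and what Scholze takes as the definition of \'etale for perfectoid spaces) is local on the \emph{source}: for every $x\in X$ there are opens $x\in W\sub X$ and $f(x)\in U\sub Y$ with $f(W)\sub U$ such that $W\to U$ factors as an open immersion into a finite \'etale cover of $U$. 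That is the statement your proof should target, and the right starting point for it is the source-local structure theory of locally quasifinite morphisms (\cite[Proposition 1.5.4]{hu-etale}, or \cite[Theorem A.1.3]{conrad-relative-ampleness} in the rigid setting) combined with henselianity of the stalks $\oo_{Y,y}$ of analytic adic spaces --- which is what lets one split a finite cover near $y$, isolate the component through the given point, and discard the rest --- rather than any partial-properness or compactification argument.
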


We note that the analogue of part (2) fails in the world of schemes (and encourage the interested reader to try to show this). In some sense, part (2) means that \'etale morphisms of analytic adic spaces are closer to being local isomorphisms than \'etale morphisms of algebraic varieties. Another fact pointing in this direction is the following.

\begin{exer}
Let $f : X=\Spa(A) \to Y=\Spa(B)$ be an \'etale map of affinoid rigid spaces over $K$. Assume that $x\in X$ is a classical point such that $x$ and $f(x)$ have the same residue field. Then there are open sets $x\in V \sub X$ and $f(x) \in U \sub Y$ such that $f : V \to U$ is an isomorphism.
\end{exer}

We remark that the assumption that $x$ is classical and has the same residue field as $f(x)$ is crucial. In particular, even if $K$ is algebraically closed, an \'etale map need not be a local isomorphism at all points, though it is around classical points. We also state the following characterization of smooth morphisms, see \cite[Corollary 1.6.10]{hu-etale}, saying roughly that smooth morphisms locally look like fibrations.

\begin{prop}
Let $f : X \to \Spa A$ be a morphisms of locally Noetherian adic spaces with $A$ Noetherian. Then $f$ is smooth if and only if, for every $x\in X$, there exists an open subset $U\sub X$ containing $x$ and a factorization $U \overset{j}{\to} \Spa A \langle X_1,\dots,X_n \rangle \to \Spa A $ for some $n$ (depending on $x$), where $j$ is \'etale.
\end{prop}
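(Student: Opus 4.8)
The plan is to route the entire argument through the infinitesimal lifting criteria together with the characterization of smoothness in terms of the sheaf of relative differentials $\Omega_{X/Y}$ from \cite[\S 1.6]{hu-etale}: a locally finite type morphism is smooth if and only if it is flat and $\Omega_{X/Y}$ is locally free of finite rank, and \'etale exactly when moreover that rank is $0$. Throughout I write $\bD^n_A := \Spa A\langle X_1,\dots,X_n\rangle$ for the relative polydisk, so that $\bD^n_A \to \Spa A$ is flat and $\Omega_{\bD^n_A/\Spa A}$ is free of rank $n$ on the classes $dX_1,\dots,dX_n$. With the differential criterion in hand, most of the proof becomes formal, and the one genuinely substantial point is showing that the map $j$ produced in the ``only if'' direction is \'etale.

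For the ``if'' direction I would argue as follows. Suppose that near each point we have a factorization $U \overset{j}{\to} \bD^n_A \to \Spa A$ with $j$ \'etale. Then $U \to \Spa A$ is flat, being a composite of the flat maps $j$ and $\bD^n_A \to \Spa A$. The first exact sequence of relative differentials $j^\ast \Omega_{\bD^n_A/\Spa A} \to \Omega_{U/\Spa A} \to \Omega_{U/\bD^n_A} \to 0$, combined with $\Omega_{U/\bD^n_A}=0$ (from \'etaleness of $j$) and the isomorphism $j^\ast\Omega_{\bD^n_A/\Spa A}\cong\Omega_{U/\Spa A}$ it provides, shows that $\Omega_{U/\Spa A}$ is locally free of rank $n$. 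Since flatness and local freeness of $\Omega$ can both be checked locally on the source, the differential criterion immediately yields that $f$ is smooth. (This is cleaner than trying to argue directly that smoothness is local on the source via the lifting criterion, where the non-uniqueness of lifts obstructs naive gluing.)

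For the ``only if'' direction, assume $f$ is smooth and fix $x\in X$. By the differential criterion $\Omega_{X/Y}$ is locally free, so after shrinking to an affinoid open $U=\Spa B$ around $x$ I may take it free of some rank $n$. I would then choose $t_1,\dots,t_n\in\oo_X(U)$ whose differentials $dt_1,\dots,dt_n$ form a basis of $\Omega_{U/\Spa A}$; after a little bookkeeping (shrinking $U$ and rescaling the $t_i$ by a pseudouniformizer if necessary to make them power-bounded) the assignment $X_i\mapsto t_i$ defines a morphism $j:U\to\bD^n_A$ over $\Spa A$. The first exact sequence of differentials now reads $j^\ast\Omega_{\bD^n_A/\Spa A}\overset{\alpha}{\to}\Omega_{U/\Spa A}\to\Omega_{U/\bD^n_A}\to 0$ with $\alpha(dX_i)=dt_i$; since the $dt_i$ are a basis, $\alpha$ is a surjection of finite locally free modules of the same rank $n$, hence an isomorphism, and $\Omega_{U/\bD^n_A}=0$, i.e. $j$ is unramified.

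The hard part is upgrading ``$\Omega_{U/\bD^n_A}=0$'' to ``$j$ is \'etale''. My plan is to verify the unique lifting property defining \'etale morphisms directly, using deformation theory relative to $\Spa A$. Given a square-zero thickening $T\hookrightarrow S$ with ideal $\mc I$, a $\bD^n_A$-morphism $T\to U$ and a compatible $S\to\bD^n_A$, I would first use smoothness of $U\to\Spa A$ to produce some $\Spa A$-lift $S\to U$. The set of all such lifts is a torsor under $\Hom_{\oo_U}(\Omega_{U/\Spa A},\mc I)$, and the failure of a lift to be a $\bD^n_A$-morphism is measured by the map $\Hom_{\oo_U}(\Omega_{U/\Spa A},\mc I)\to\Hom_{\oo_U}(j^\ast\Omega_{\bD^n_A/\Spa A},\mc I)$ dual to $\alpha$. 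Because $\alpha$ is an isomorphism this map is bijective, so there is exactly one lift compatible with $S\to\bD^n_A$, which is precisely the unique lifting property of an \'etale morphism. (Alternatively, when one is working over a field, one may instead deduce flatness of the unramified map $j$ between $\Spa A$-smooth spaces of equal relative dimension by a dimension count on the excellent local rings, using facts such as Proposition \ref{local rings of rigid spaces}, and conclude $j$ is \'etale from ``flat $+$ unramified''.) The main obstacle is thus setting up the deformation-theoretic torsor structure for morphisms of adic spaces and matching it with the sequence of differentials; once that is granted, and once the differential criterion of \cite[\S 1.6]{hu-etale} is accepted, everything else is routine.
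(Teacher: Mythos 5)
The step that fails is the differential criterion you build the whole proof on: for a morphism locally of finite type, ``flat and $\Omega_{X/Y}$ locally free of finite rank'' does \emph{not} imply smooth; one must also require the rank to equal the relative dimension (equivalently, geometric regularity of the fibres). A counterexample within the scope of the proposition: take $K=\Fp((t))$, $L=K[x]/(x^p-t)$, and $f:\Spa L \to \Spa K$. Then $f$ is finite and flat, and $\Omega_{L/K}=L\,dx$ is free of rank $1$ (the only relation, $px^{p-1}\,dx=0$, is vacuous in characteristic $p$), yet $f$ is not smooth: take the square-zero extension $A'=L[s]/(s^2)$ of $L$ with the \emph{continuous} $K$-algebra structure $t\mapsto t+s$; the identity $L\to A'/(s)=L$ is a $K$-morphism, but any lift $L\to A'$ must send $x\mapsto x+cs$, and $(x+cs)^p=x^p=t\neq t+s$, so no lift exists. (Consistently, the proposition itself would force $L/K$ to be \'etale, hence separable.) This false implication is used essentially in your ``if'' direction, where you deduce smoothness of $U\to\Spa A$ from flatness plus $\Omega_{U/\Spa A}$ locally free of rank $n$. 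That direction is nevertheless true, and the correct argument is more elementary and avoids differentials altogether: \'etale morphisms are smooth, $\Spa A\langle X_1,\dots,X_n\rangle\to\Spa A$ is smooth (any set-theoretic lift of an integral element along a square-zero extension is again integral --- square the monic relation --- so the defining power-bounded elements lift), and a composition of smooth morphisms is smooth, all directly from the lifting criterion.

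Your ``only if'' direction is essentially the standard EGA-style argument, and there you only invoke the true implication (smooth $\Rightarrow$ flat and $\Omega$ locally free), so its skeleton is sound. Note, however, that the paper itself gives no proof of this proposition --- it is quoted with a reference to \cite[Corollary 1.6.10]{hu-etale} --- so the comparison is with Huber's argument, which indeed runs along your lines. Two caveats remain. First, rescaling the $t_i$ by a pseudouniformizer to obtain a map to $\Spa A\langle X_1,\dots,X_n\rangle$ uses that the pseudouniformizer is a \emph{unit}, i.e. that $A$ is Tate; the proposition also allows non-analytic Noetherian $A$ (e.g. $A=\Zp$), where this step needs a separate argument. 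Second, as you yourself acknowledge, the torsor structure on square-zero lifts --- correctly, a torsor under $\Hom_{\oo_T}(g^\ast\Omega_{U/\Spa A},\mc{I})$ for $g:T\to U$, not under $\Hom_{\oo_U}(\Omega_{U/\Spa A},\mc{I})$ --- is precisely the technical content that has to be established for Huber's finite differentials; flagging it is honest, but that deformation-theoretic input \emph{is} the proof of the hard implication, not a side remark one can grant.
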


\begin{rema}
We have stated this proposition in the case $A^+ = A^\circ$ for notational convenience only.
\end{rema}

\subsection{Quasi-Stein spaces}

A type of rigid space which frequently shows up is a quasi-Stein space.

\begin{defi}\label{def of quasi-stein space}
Let $X$ be a rigid space over $K$. We say that $X$ is a quasi-Stein space if there is an open cover $X= \bigcup_{n=1}^\infty U_n$ satisfying the following properties:
\begin{enumerate}
\item Each $U_n$ is affinoid and $U_n \sub U_{n+1}$ for all $n\geq 1$;

\item The restriction maps $\oo_X(U_{n+1}) \to \oo_X(U_n)$ have dense image for all $n\geq 1$.
\end{enumerate}

If, in addition, $U_n$ is relatively compact inside $U_{n+1}$ for all $n\geq 1$, then we say that $X$ is a Stein space.
\end{defi}

Note that a Stein space is partially proper by definition (according to Definition \ref{prop: rigid}). The simplest examples of quasi-Stein spaces are affinoid spaces (just take $U_n=X$ for all $n$). On the other hand, there are plenty of Stein spaces coming from affine varieties: If $S$ is an affine variety over $K$, written explicitly as 
\[
S = \Spec K[X_1,\dots,X_d]/(f_1,\dots,f_r),
\]
then its analytification $S^{an}$ is defined by
\[
S^{an} = \bigcup_{n=0}^\infty \Spa K\langle \pi^n X_1 \dots, \pi^n X_d \rangle / (f_1,\dots,f_r),
\]
where $\pi$ is a pseudouniformizer in $K$, and this description is functorial in $S$ and glues (and is independent of the choice of $\pi$). 

\begin{exer}
Check that $S^{an}$ is a Stein space when $S$ has no component of dimension $0$.
\end{exer}

In particular, all analytifications of affine varieties (indeed of all varieties) are partially proper, giving a large supply of partially proper rigid spaces. Another example of a Stein space is the open unit disc $D$, which we may define as
\[
D = \bigcup_{n=1}^\infty \Spa K\langle \pi^{-1}T^n \rangle \sub \Spa K\langle T \rangle.
\]
General quasi-Stein spaces behave like affinoids in the following sense:

\begin{theo}\label{Thm A and B for quasistein spaces}
Let $X$ be a quasi-Stein space over $K$ with a cover $X = \bigcup_{n=1}^\infty U_n$ as in Definition \ref{def of quasi-stein space}, and let $\mc{F}$ be a coherent sheaf on $X$.
\begin{enumerate}
\item We have $H^i(X,\mc{F})=0$ for all $i\geq 1$.

\item The image of $\mc{F}(X)$ is dense in $\mc{F}(U_n)$ for all $n$, and in fact $\mc{F}(U_n) \cong \mc{F}(X) \otimes_{\oo_X(X)} \oo_X(U_n)$ for all $n$.
\end{enumerate}
\end{theo}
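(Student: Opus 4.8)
The plan is to exploit the affinoid exhaustion $X = \bigcup_n U_n$ and compute everything in terms of the inverse system of $K$-Banach spaces $\{\mc{F}(U_n)\}_n$, reducing the two assertions to the vanishing of higher cohomology of coherent sheaves on the affinoids $U_n$ (Proposition \ref{quasicoherent sheaves on affines}) together with a topological Mittag--Leffler argument. Recall that for a countable increasing union of opens one has, for any abelian sheaf $\mc{G}$, a Milnor short exact sequence
\[
0 \to \varprojlim_n{}^{1} H^{i-1}(U_n,\mc{G}) \to H^i(X,\mc{G}) \to \varprojlim_n H^i(U_n,\mc{G}) \to 0,
\]
coming from $R\Gamma(X,\mc{G}) \cong R\varprojlim_n R\Gamma(U_n,\mc{G})$ (valid because a flasque resolution restricts to a flasque, hence $\varprojlim$-acyclic, resolution on each $U_n$). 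Since $\mc{F}$ is coherent and each $U_n$ is affinoid, $H^j(U_n,\mc{F})=0$ for $j\geq 1$; thus for $i\geq 2$ both outer terms vanish and $H^i(X,\mc{F})=0$ at once, while for $i=1$ the sequence collapses to $H^1(X,\mc{F}) \cong \varprojlim_n^{1}\mc{F}(U_n)$.

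The first substantive step is to promote the density in Definition \ref{def of quasi-stein space}(2), which only concerns $\oo_X$, to all coherent sheaves: the restriction maps $\mc{F}(U_{n+1}) \to \mc{F}(U_n)$ have dense image. I would fix a surjection $\oo_X(U_{n+1})^k \to \mc{F}(U_{n+1})$ (possible as $\mc{F}(U_{n+1})$ is finite over the affinoid algebra $\oo_X(U_{n+1})$); base-changing along $\oo_X(U_{n+1}) \to \oo_X(U_n)$ yields a surjection $\oo_X(U_n)^k \to \mc{F}(U_n)$, because $\mc{F}|_{U_{n+1}} = \wt{\mc{F}(U_{n+1})}$ forces $\mc{F}(U_n) = \mc{F}(U_{n+1}) \otimes_{\oo_X(U_{n+1})} \oo_X(U_n)$. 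The map $\oo_X(U_{n+1})^k \to \oo_X(U_n)^k$ has dense image by hypothesis, and composing with the continuous surjection onto $\mc{F}(U_n)$ shows $\mc{F}(U_{n+1}) \to \mc{F}(U_n)$ has dense image. Now $\{\mc{F}(U_n)\}$ is an inverse system of Banach spaces with dense transition maps, so the topological Mittag--Leffler theorem yields simultaneously $\varprojlim_n^{1}\mc{F}(U_n)=0$, finishing part (1), and the density of each projection $\mc{F}(X)=\varprojlim_n\mc{F}(U_n) \to \mc{F}(U_n)$, which is the first half of part (2).

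It remains to prove the base-change isomorphism $\mc{F}(X)\otimes_{\oo_X(X)}\oo_X(U_n) \to \mc{F}(U_n)$. Surjectivity holds for any coherent sheaf: the image is the $\oo_X(U_n)$-submodule of $\mc{F}(U_n)$ generated by the dense image of $\mc{F}(X)$, and since $\oo_X(U_n)$ is Noetherian this submodule is finitely generated, hence closed in the canonical topology (maps between finite modules being strict); a closed submodule containing a dense set is everything. For injectivity I would first treat a coherent $\mc{G}$ admitting a \emph{global} surjection $\oo_X^k \to \mc{G}$: writing $\mc{R}$ for the coherent kernel, part (1) gives $H^1(X,\mc{R})=0$, so global sections yield an exact sequence $0 \to \mc{R}(X) \to \oo_X(X)^k \to \mc{G}(X) \to 0$. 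Tensoring with $\oo_X(U_n)$ and mapping to the corresponding sequence over $U_n$, the base-change map for $\oo_X^k$ is an isomorphism and that for $\mc{R}$ is surjective by the above, so a four-lemma diagram chase forces the base-change map for $\mc{G}$ to be an isomorphism.

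The main obstacle is reducing an arbitrary coherent $\mc{F}$ to this globally generated situation. Finitely many global sections generate $\mc{F}$ only over some $U_N$, not over all of $X$; passing to the subsheaf $\mc{F}''$ they generate and the quotient $\mc{Q}=\mc{F}/\mc{F}''$ (vanishing on $U_N \supseteq U_n$), one is left needing the statement for $\mc{Q}$, namely $\mc{Q}(X)\otimes_{\oo_X(X)}\oo_X(U_n)=0$. I expect the cleanest resolution is to generate $\mc{F}$ by a countable family of global sections that is generating on every $U_m$ at once, working with a countable free sheaf $\oo_X^{(\mathbb{N})}$; the delicate points are then checking that such a sheaf is still acyclic on $X$ (the affinoid case is the free-module case already in Proposition \ref{quasicoherent sheaves on affines}, and the passage to $X$ is again Milnor plus Mittag--Leffler, requiring that $\oo_X(U_{n+1})^{(\mathbb{N})} \to \oo_X(U_n)^{(\mathbb{N})}$ retain dense image), and that base change still commutes with the direct sum. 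Granting this, the four-lemma chase applies verbatim.
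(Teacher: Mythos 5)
Your first three paragraphs are sound and in fact follow the skeleton of the Schneider--Teitelbaum argument that the paper cites in lieu of a proof: the Milnor sequence plus the topological Mittag--Leffler theorem for Banach spaces gives part (1) and the density statement, closedness of finitely generated submodules gives surjectivity of the base-change map, and your four-lemma chase correctly settles every coherent sheaf admitting a finite \emph{global} presentation. The gap is the final paragraph: the sheaf $\oo_X^{(\mathbb{N})}$ cannot play the role you assign to it. First, since $X$ is not quasicompact, $\Gamma(X,\oo_X^{(\mathbb{N})})$ is strictly larger than $\oo_X(X)^{(\mathbb{N})}$ -- a global section is a family $(f_i)_i$ of global functions that is only \emph{locally} finite -- so the anchor of your chase, the base-change isomorphism for the free sheaf, is not available. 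Second, and fatally, acyclicity fails: $\oo_X(U_n)^{(\mathbb{N})}$ is not a complete metrizable group in any relevant topology, so topological Mittag--Leffler does not apply, and indeed $H^1(X,\oo_X^{(\mathbb{N})})=\varprojlim^1_n \oo_X(U_n)^{(\mathbb{N})}$ can be nonzero on a quasi-Stein space. For instance, on the open unit disc $X=\bigcup_n U_n$, $U_n=\{|T|\le r_n\}$ with $r_n\nearrow 1$, take the class of $(w_n)_n$ where $w_n$ sits in the $n$-th summand of $\oo_X(U_n)^{(\mathbb{N})}$ and is a power series with radius of convergence exactly $r_n$: unwinding the $\varprojlim^1$ cocycle equation, a trivialization would produce $f_j\in\oo_X(X)$ with $w_j|_{U_1}=-f_j|_{U_1}$ for all but finitely many $j$, and since the restriction $\oo_X(U_j)\to\oo_X(U_1)$ is injective this would force $w_j=-f_j|_{U_j}$ to extend to all of $X$, a contradiction. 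So the two points you flagged as ``delicate'' are not merely unverified; they are false.

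The repair is to return to the reduction you abandoned and kill the quotient directly, using only what you have already proved. Write $\mc{Q}=\mc{F}/\mc{F}''$ with $\mc{Q}|_{U_N}=0$, $N\ge n$, and let $q\in\mc{Q}(X)$. The annihilator $\mc{J}=\Ker\bigl(\oo_X\xrightarrow{1\mapsto q}\mc{Q}\bigr)$ is a coherent ideal sheaf with $\mc{J}|_{U_N}=\oo_{U_N}$, and the image subsheaf $\oo_X q\sub\mc{Q}$ satisfies $(\oo_X q)(X)=\oo_X(X)\cdot q\cong\oo_X(X)/\mc{J}(X)$, because $H^1(X,\mc{J})=0$ by your part (1). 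Hence $(\oo_X q)(X)\otimes_{\oo_X(X)}\oo_X(U_n)\cong\oo_X(U_n)/\mc{J}(X)\oo_X(U_n)$, and your surjectivity statement applied to the coherent sheaf $\mc{J}$ gives $\mc{J}(X)\oo_X(U_n)=\mc{J}(U_n)=\oo_X(U_n)$, so $1\otimes q=0$ in $\mc{Q}(X)\otimes_{\oo_X(X)}\oo_X(U_n)$. As $q$ was arbitrary, $\mc{Q}(X)\otimes_{\oo_X(X)}\oo_X(U_n)=0$, which is exactly what your chase needed: applying it to $0\to\mc{F}''\to\mc{F}\to\mc{Q}\to 0$ (noting $\mc{F}''(U_n)=\mc{F}(U_n)$ and that $\mc{F}''$ is globally finitely generated) completes the injectivity. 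This quotient-killing step via annihilator ideals is, in essence, how the argument is closed in the Fr\'echet--Stein setting of Schneider--Teitelbaum.
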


In particular, coherent sheaf theory on $X$ is entirely controlled by the modules of global sections. We remark that if $\mc{F}$ is a coherent sheaf of $X$ then $\mc{F}(X)$ need not be a finitely generated $\oo_X(X)$-module. The $\oo_X(X)$-modules arising as $\mc{F}(X)$ for some coherent sheaf are called \emph{coadmissible}. For a reference for Theorem \ref{Thm A and B for quasistein spaces} (in the more general setting of Fr\'echet-Stein algebras; see below), see \cite[\S 3, Theorem, Corollary 3.1]{schneider-teitelbaum-dist}.

\begin{rema}
One might wonder to which extent vanishing of higher cohomology of coherent sheaves characterizes quasi-Stein spaces. In algebraic geometry, a quasicompact and quasiseparated scheme $X$ is affine if and only if $H^i(X,\mc{F})=0$ for all quasicoherent $\mc{F}$ and all $i\geq 1$. In complex geometry, a complex analytic space is Stein if and only if $H^i(X,\mc{F})=0$ for all coherent $\mc{F}$ and all $i\geq 1$. However, the analogue is not true in rigid geometry. We refer to \cite{liu,maculan-poineau-stein} for more information.
\end{rema}

In particular, we see that the theory of coherent sheaves over a quasi-Stein space $X$ can be described entirely in terms of coadmissible $\oo(X)$-modules. Moreover, $\oo(X)$ itself is the inverse limit $\varprojlim_n \oo(U_n)$, and the transition maps are flat. A generalization of the rings $\oo(X)$ and their coadmissible modules was given by Schneider and Teitelbaum \cite[\S 3]{schneider-teitelbaum-dist}, under the name of Fr\'echet--Stein algebras. This theory is very important for (locally analytic) representation theory of $p$-adic groups, and featured in Hellmann's lectures in the spring school. Here, we give the basic definitions.

\begin{defi} Let $K$ be a nonarchimedean field and let $A$ be a (not necessarily commutative) $K$-algebra.
\begin{enumerate}
\item We say that $A$ is a Fr\'echet--Stein algebra if we can write $A = \varprojlim_{n\geq 1} A_n$ where the $A_n$ are left Noetherian $K$-Banach algebras, the transition maps $A_{n+1} \to A_n$ are injective of operator norm $\leq 1$ and make $A_n$ into a flat right $A_{n+1}$-module, and the maps $A \to A_n$ have dense image.

\item If $A = \varprojlim_n A_n$ is a Fr\'echet--Stein algebra, then a coherent sheaf for $A$ is an inverse system $(M_n)_n$ of finitely generated left $A_n$-modules $M_n$, such that the transition maps induce isomorphisms $A_n \otimes_{A_{n+1}} M_{n+1} \cong M_n$.

\item If $A = \varprojlim_n A_n$ is a Fr\'echet--Stein algebra, then a left $A$-module $M$ is called coadmissible if there exists a coherent sheaf $(M_n)_n$ with $M \cong \varprojlim_n M_n$. 
\end{enumerate}

\end{defi}

The standard references for Fr\'echet--Stein algebras are the original paper \cite{schneider-teitelbaum-dist} and Emerton's monograph \cite{emerton-locallyanalytic}. We remark that, if $A =\varprojlim_n A_n$ is a Fr\'echet--Stein algebra, then the norms on the $A_n$ induce norms on $A$ and $A$ is a Fr\'echet space (indeed a Fr\'echet algebra) with respect to these norms (hence the ``Fr\'echet'' in the terminology). We also note that being a Fr\'echet--Stein algebra is an intrinsic property of a Fr\'echet algebra, as is being a coadmissible module; see \cite[Propositions 1.2.7 and 1.2.9]{emerton-locallyanalytic}. We state the analogue of Theorem \ref{Thm A and B for quasistein spaces} for Fr\'echet--Stein algebras (see \cite[\S 3, Theorem]{schneider-teitelbaum-dist}).

\begin{theo}
Let $A =\varprojlim_n A_n$ be a Fr\'echet--Stein algebra and let $M = \varprojlim_n M_n$ be a coadmissible module.
\begin{enumerate}
\item The natural map $M \to M_n$ has dense image.

\item The higher derived functor $R^i\varprojlim_n M_n$ vanish for all $i\geq 1$.
\end{enumerate}
\end{theo}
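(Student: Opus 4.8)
The plan is to reduce both statements to a single fact about the tower of Banach spaces $(M_n)_n$, with transition maps $f_n\colon M_{n+1}\to M_n$ and $f_{j,k}\colon M_k\to M_j$ for $j\le k$: namely that \emph{each $f_n$ has dense image}. Both assertions then follow from successive approximation in these complete spaces, which is the topological incarnation of the Mittag--Leffler condition.

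First I would establish this key input, which is where the Fr\'echet--Stein axioms enter. Since each $A_n$ is a left Noetherian $K$-Banach algebra, every finitely generated $M_n$ carries its canonical Banach topology, and any surjection $A_n^{\oplus r}\to M_n$ is strict, hence open by the open mapping theorem (this is the Banach-algebra analogue of the canonical-topology statement recalled above). Choose generators $x_1,\dots,x_r$ of $M_{n+1}$ over $A_{n+1}$, write $\phi\colon A_{n+1}\to A_n$ for the transition map and $\bar x_i$ for the image of $x_i$. The isomorphism $A_n\otimes_{A_{n+1}}M_{n+1}\cong M_n$ shows that $m=\sum_i b_i x_i$ maps to $\sum_i\phi(b_i)\bar x_i$, so the image of $f_n$ equals $\sum_i\phi(A_{n+1})\bar x_i$, which is the image under the strict (open) surjection $A_n^{\oplus r}\to M_n$ of the dense set $\phi(A_{n+1})^{\oplus r}$. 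As $\phi$ has dense image by hypothesis, $f_n$ has dense image; composing, so does every $f_{j,k}$.

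Given this, I would prove part (1) directly. Fix $n_0$, $v\in M_{n_0}$, $\epsilon>0$; set $a_{n_0}=v$ and, using density of the $f_k$, inductively choose $a_{k+1}\in M_{k+1}$ with $\|f_k(a_{k+1})-a_k\|_k$ small. When $a_{k+1}$ is chosen, the finitely many continuous maps $f_{j,k}$ with $n_0\le j\le k$ are already fixed, so the error can be made small relative to their operator norms times $2^{-(k-n_0+1)}\epsilon$; then for each fixed $j$ the images $f_{j,k}(a_k)$ form a Cauchy sequence in $M_j$, whose limit $w_j$ exists by completeness. Continuity of the $f_j$ makes the $w_j$ compatible, and $\|w_{n_0}-v\|_{n_0}<\epsilon$; extending by $w_j:=f_{j,n_0}(w_{n_0})$ for $j<n_0$ yields $w\in\varprojlim_n M_n=M$ with $w_{n_0}$ within $\epsilon$ of $v$, proving that $M\to M_{n_0}$ has dense image.

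For part (2) I would use that for a tower indexed by $\mathbb{N}$ the derived limit is computed by $\prod_n M_n \xrightarrow{1-T}\prod_n M_n$, where $(Tx)_n=f_n(x_{n+1})$; hence $R^i\varprojlim M_n=0$ automatically for $i\ge 2$, and $R^1\varprojlim M_n=\coker(1-T)$. It remains to show $1-T$ is surjective, i.e. to solve $x_n-f_n(x_{n+1})=y_n$ for arbitrary $(y_n)$, and this is the main obstacle. The naive Neumann series $x_n=\sum_{j\ge 0}f_{n,n+j}(y_{n+j})$ need not converge, since dense transition maps give no decay of $f_{n,n+j}(y_{n+j})$; density by itself is exactly the topological Mittag--Leffler condition but is insufficient at the level of abelian groups. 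The resolution is to invoke (or reprove by the same successive-approximation scheme) the classical theorem that a tower of complete metrizable groups satisfying this condition has vanishing $R^1\varprojlim$, as used in \cite{schneider-teitelbaum-dist}: one modifies $(y_n)$ by a coboundary $\delta u=(u_n-f_n(u_{n+1}))_n$, choosing the corrections $u_n$ via density so that the adjusted series converges in each Banach space $M_n$, whereupon completeness upgrades the approximate solution to an exact one. Thus the conceptual heart is the density of the transition maps supplied by the Fr\'echet--Stein axioms, while completeness of the $M_n$ is what converts approximation into solution.
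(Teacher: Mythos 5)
The paper itself gives no proof of this theorem: it is quoted from Schneider--Teitelbaum with a pointer to \cite[\S 3, Theorem]{schneider-teitelbaum-dist}, so the only meaningful comparison is with that cited argument. Your proposal is correct and reconstructs essentially that standard proof: you first show the transition maps $M_{n+1}\to M_n$ have dense image (using the canonical Banach topology on finitely generated modules over a Noetherian Banach algebra, the open mapping theorem, and density of $A_{n+1}\to A_n$, which in the paper's formulation follows from density of $A\to A_n$), and then run the topological Mittag--Leffler argument for countable towers of complete metrizable groups --- your successive-approximation proof of (1) and your coboundary-correction argument for surjectivity of $1-T$ on $\prod_n M_n$ in (2) are exactly the mechanism used there, and both are sound.
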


Our final topic on quasi-Stein spaces will be a version of Serre duality, due to Chiarellotto \cite{chiarellotto}, which was used in Hellmann's lectures in the spring school. It is restricted to smooth Stein spaces, and involves a notion of cohomology with compact support for sheaves, which we will now define. Chiarellotto works with Tate's version rigid geometry, and is therefore forced to work with the topos associated with a rigid space in the sense of Tate. Since this topos is the topos associated with the (topological space of the) adic space, the definition is simpler in the language of adic spaces.

\begin{defi}
Let $X$ be a taut rigid space and let $\Sigma$ be the set of quasicompact closed subsets of $X$.
\begin{enumerate}
\item Let $Z\in \Sigma$ and let $\mc{F}$ be a sheaf on $X$. We define $H^0_Z(X,\mc{F}) \sub \mc{F}(X)$ to be the subgroup of sections whose support is contained in $Z$.

\item Let $Z\in \Sigma$ and let $\mc{F}$ and $\mc{G}$ be sheaves on $X$. We denote the relative Hom-sheaf of $\mc{F}$ and $\mc{G}$ by $\shom(\mc{F},\mc{G})$, and we define $\Hom_Z(\mc{F},\mc{G}):=H_Z^0(\shom(\mc{F},\mc{G}))$.

\item Let $Z\in \Sigma$. We define $H^i_Z(X,-)$ and $\Ext^i_Z(-,-)$ to be the derived functors of $H^0_Z(X,-)$ and $\Hom_Z(-,-)$, respectively.

\item We define $H^i_c(X,-)=\varinjlim_{Z\in \Sigma} H^i_Z(X,-)$ and $\Ext^i_c(-,-)=\varinjlim_{Z\in \Sigma} \Ext^i_Z(-,-)$.

\end{enumerate}
\end{defi}

We can now state Chiarellotto's Serre duality theorem \cite[Theorem 4.21]{chiarellotto}:

\begin{theo}
Let $X$ be a smooth connected Stein space over $K$ of dimension $d$ and let $\mc{F}$ be a coherent sheaf on $X$. Then there are perfect pairings 
\[
H_c^{i}(X,\mc{F}) \times \Ext^{d-i}(\mc{F},\Omega_X^d) \to K
\]
and
\[
H^i(X,\mc{F}) \times \Ext_c^{d-i}(\mc{F},\Omega_X^d) \to K
\]
for all $i$. For appropriately defined topologies, these are pairings of topological vector spaces, making one space the strong dual of the other and vice versa.
\end{theo}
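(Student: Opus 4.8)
The plan is to realize both pairings as a cup product followed by a single global trace, and then to prove perfectness by d\'evissage down to the structure sheaf, where the statement becomes a topological duality for the Fr\'echet algebra $\oo(X)$. For a quasicompact closed $Z\subseteq X$ the Yoneda product gives natural maps $\Ext^{d-i}(\mc{F},\Omega_X^d)\otimes H_Z^i(X,\mc{F})\to H_Z^d(X,\Omega_X^d)$ and $\Ext_Z^{d-i}(\mc{F},\Omega_X^d)\otimes H^i(X,\mc{F})\to H_Z^d(X,\Omega_X^d)$; passing to the (co)limit over $Z$ and composing with a trace map $\mathrm{tr}\colon H_c^d(X,\Omega_X^d)\to K$ produces the first and second pairing simultaneously. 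The first task is therefore to construct $\mathrm{tr}$: on a smooth chart with coordinates it is the residue of a top-form, and the work is to check that these local residues are coordinate-independent, kill coboundaries, and hence glue to a functional on $H_c^d$.

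The heart of the matter is the fundamental case $\mc{F}=\oo_X$ (equivalently $\mc{F}=\Omega_X^d$). Here Theorem \ref{Thm A and B for quasistein spaces} collapses one side of each pairing, since $H^i(X,-)=0$ for $i\geq1$ on a quasi-Stein space, and perfectness reduces to two assertions: that $H_c^i(X,\Omega_X^d)=0$ for $i\neq d$, and that $\mathrm{tr}$ identifies $H_c^d(X,\Omega_X^d)$ with the strong dual of $\oo(X)$ (and, reflexively, $\oo(X)$ with the strong dual of $H_c^d(X,\Omega_X^d)$). I would prove this using the Stein exhaustion $X=\bigcup_n U_n$ with $\ol{U_n}\subseteq U_{n+1}$ quasicompact (tautness, Lemma \ref{some topology}), so that $H_c^i(X,-)=\varinjlim_n H_{\ol{U_n}}^i(X,-)$, and feed this into the local-cohomology long exact sequence relating $H_{\ol{U_n}}^\bullet$, $H^\bullet(X,-)$ and $H^\bullet(X\setminus\ol{U_n},-)$. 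Theorem B kills the global cohomology in positive degree, so the vanishing of $H_c^i$ becomes a cohomological-dimension bound for the boundary regions $X\setminus\ol{U_n}$, while the identification of $H_c^d$ becomes the statement that its transition maps are the topological transposes of the dense-image, flat restrictions $\oo(U_{n+1})\to\oo(U_n)$. As $\oo(X)$ is a nuclear Fr\'echet space, $\varinjlim_n\oo(U_n)'_b$ computes precisely its strong dual; this is exactly where Theorem A (density) and the Fr\'echet--Stein formalism do the real work.

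With the fundamental case established, I would obtain the general coherent case in two d\'evissage steps. First, for a vector bundle $E$ the sheaves $\ext^j(E,\Omega_X^d)$ vanish for $j>0$, so $\Ext^{d-i}(E,\Omega_X^d)=H^{d-i}(X,E^\vee\otimes\Omega_X^d)$, and a Mayer--Vietoris/\v{C}ech argument over a trivializing cover reduces its duality to that of $\oo_X$. Second, since $X$ is smooth, hence regular, every coherent $\mc{F}$ admits a finite resolution by vector bundles; as $\mc{F}\mapsto\Ext^{d-i}(\mc{F},\Omega_X^d)$ and $\mc{F}\mapsto H_c^i(X,\mc{F})'_b$ are cohomological functors and the pairing is a natural transformation between them, a short exact sequence of coherent sheaves yields a ladder of long exact sequences and the five lemma finishes the induction.

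The main obstacle is topological rather than homological. For the pairings to be \emph{perfect} the cohomology groups must be Hausdorff, i.e. the differentials computing them must have closed image (be strict), and the strong-dual functor must be exact along the long exact sequences used in the d\'evissage. This forces one to work throughout in categories of well-behaved spaces --- nuclear Fr\'echet spaces on the $H^i$ and $\Ext^i$ side, their DFS duals on the $H_c^i$ and $\Ext_c^i$ side --- and to verify that every map in sight is strict and every space reflexive, so that dualization commutes with the exact sequences and with the (co)limits over the exhaustion; reflexivity is also what lets the two pairings be deduced from one another as mutual transposes. Establishing closed range and strictness is the delicate analytic point: in the complex-analytic prototype it is supplied by the closed-range theorems for $\bar\partial$, whereas here it is precisely the flatness and dense-image conditions built into the Fr\'echet--Stein structure, together with Theorem \ref{Thm A and B for quasistein spaces}, that must be leveraged to guarantee it.
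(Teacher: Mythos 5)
The paper itself does not prove this theorem: it is stated as a quotation of Chiarellotto \cite[Theorem 4.21]{chiarellotto}, so there is no internal proof to compare your argument against, and your outline has to be judged on its own terms. The overall strategy you describe --- Yoneda pairing composed with a trace on $H_c^d(X,\Omega_X^d)$, reduction to the fundamental case $\mc{F}=\oo_X$ via the Stein exhaustion, $H_c^i(X,-)=\varinjlim_n H^i_{\ol{U_n}}(X,-)$, Theorem \ref{Thm A and B for quasistein spaces} collapsing one side of each pairing, and Fr\'echet--Schwartz versus dual-of-Fr\'echet functional analysis carrying perfectness --- is the classical Serre--Chiarellotto route, and that part of the plan, including your final paragraph on strictness and reflexivity, correctly identifies where the analytic content lies.

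The genuine gap is the d\'evissage to arbitrary coherent sheaves. You assert that smoothness (hence regularity) implies that every coherent $\mc{F}$ on $X$ admits a \emph{finite global resolution by vector bundles}. Regularity is a local statement: it yields finite free resolutions of finitely generated modules over each affinoid algebra $\oo(U_n)$, not global resolutions on $X$. Already the first step fails in general: if there were an exact sequence $0\to\mc{K}\to\oo_X^n\to\mc{F}\to 0$, then $\mc{K}$ would be coherent, so $H^1(X,\mc{K})=0$ by Theorem \ref{Thm A and B for quasistein spaces}(1), hence $\oo(X)^n\to\mc{F}(X)$ would be surjective and $\mc{F}(X)$ would be a finitely generated $\oo(X)$-module --- but, as the paper remarks immediately after Theorem \ref{Thm A and B for quasistein spaces}, global sections of coherent sheaves on quasi-Stein spaces need \emph{not} be finitely generated (they are merely coadmissible). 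No argument is offered for why surjections from non-free finite-rank vector bundles would fare better; their section modules are again coadmissible, and Theorem A-type density gives no global finite-generation statement. So the induction in your third paragraph never gets started. The natural repair, and the shape of the arguments in the literature, is to run that d\'evissage at the level of the exhaustion instead: use finite free resolutions of the finitely generated modules $\mc{F}(U_n)$ over each $\oo(U_n)$ to establish an affinoid (Banach-space) duality with supports in $\ol{U_n}$, and only then pass to the limit and colimit over $n$ --- which is precisely where the strictness, reflexivity, and exactness-of-strong-duals issues you list must be invoked. Those concerns are correct, but they govern the limit procedure over the exhaustion, not a global resolution argument that is unavailable on a Stein space.
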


\subsection{Adic spaces and formal models}

Let $X$ be a quasicompact and quasiseparated rigid space over a nonarchimedean field $K$. We end this lecture by the discussing the relationship between $X$ and the collection of its formal models. For an introduction to the theory of formal schemes in the situation we need, as well as a detailed account of the relationship between rigid and formal geometry, we refer to \cite[Part II]{bosch}. While we will refer to that source for most details, we do start by recalling the notion of an admissible formal scheme over $K^\circ$.

\begin{defi} Let $\pi  \in K$ be a pseudouniformizer.
\begin{enumerate}
\item We say that a topological $K^\circ$-algebra is admissible if it has the $\pi$-adic topology, is flat over $K^\circ$ and isomorphic to $K^\circ \langle X_1,\dots,X_n \rangle /(f_1,\dots,f_r)$ for some $n$ and some $f_1,\dots,f_r \in K^\circ \langle X_1,\dots,X_n \rangle$.

\item We say that a formal $K^\circ$-scheme $\mf{X}$ is admissible if it is locally of the form $\Spf A$ with $A$ an admissible $K^\circ$-algebra.
\end{enumerate}

\end{defi}

There is a functor from quasiparacompact\footnote{A topological space is quasiparacompact if it admits a cover $(U_i)_{i\in I}$ by quasicompact open subsets such that, for every $i$, $U_i \cap U_j = \emptyset$ for all but finitely many $j$.} formal schemes over $K^\circ$ to quasiparacompact and quasiseparated rigid spaces over $K$. On affinoids, it is defined by
\[
\Spf A \mapsto \Spa A[1/\pi];
\]
the general case is obtained by gluing \cite[p. 171-172]{bosch}. Given a formal scheme $\mf{X}$ we write $\mf{X}^{rig}$ for its associated rigid space; it is called the (rigid) generic fibre of $\mf{X}$. Conversely, given a rigid space $X$, an admissible formal scheme $\mf{X}$ together with an isomorphism $f_{\mf{X}}: \mf{X}^{rig} \cong X$ is called a formal model of $X$ (the isomorphism $f_{\mf{X}}$ is usually suppressed from the notation). In such a situation, we have a continuous and closed surjective morphism
\[
\spe_{\mf{X}} : X \to \mf{X},
\]
called the specialization morphism, which naturally extends to a morphism of locally ringed spaces $(X,\oo_X^+) \to (\mf{X},\oo_{\mf{X}})$. At the level of affinoids $\mf{X}=\Spf A$, it sends a valuation $x\in X=\mf{X}^{rig}$ to the open prime ideal $
\spe_{\mf{X}}(x) = \{ f\in A \mid |f(x)| < 1 \}$
in $A$. 

\begin{rema}\label{formal schemes and adic spaces}
Formal schemes may be viewed as adic spaces. At the level of affines this simply replaces $\Spf A$ with $\Spa A$; see \cite[\S 4]{hu-generalization}. The set $\Spf A$ embeds into $\Spa A$ by sending an open prime ideal $\p$ to the valuation $x$ on $A$ with value group $\{1\}$ characterized by $|f(x)| = 0$ if and only if $f\in \p$. In this language, the specialization map may be interpreted purely in terms of valuations. We will come back to this viewpoint towards the end of the lecture.
\end{rema}

To compare admissible formal schemes and rigid spaces, one needs to study a particular type of morphisms between admissible formal schemes called admissible formal blowups. Given any admissible formal scheme $\mf{X}$ and a coherent open ideal sheaf $\mf{a} \sub \oo_{\mf{X}}$, one associates an admissible formal scheme $\varphi : \mf{X}_{\mf{a}} \to \mf{X}$ over $\mf{X}$ called the admissible formal blowup of $\mf{X}$ in $\mf{a}$; see \cite[\S II.8.2]{bosch}. The induced map $\varphi^{rig} : \mf{X}_{\mf{a}}^{rig} \to \mf{X}^{rig}$ is an isomorphism. With this setup, Raynaud's characterization of rigid spaces in terms of formal schemes is the following \cite[\S II.8.4, Theorem 3]{bosch}:

\begin{theo}\label{raynauds theorem}
The functor $\mf{X} \to \mf{X}^{rig}$ induces an equivalence between the category of quasiparacompact admissible formal $K^\circ$-schemes, localized at the class of admissible formal blowups, and the category of quasiparacompact and quasiseparated rigid spaces.
\end{theo}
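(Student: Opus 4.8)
The plan is to verify the two standard conditions for an equivalence of categories --- essential surjectivity and full faithfulness --- after first checking that the functor genuinely descends to the localization. Since every admissible formal blowup $\varphi : \mf{X}_{\mf{a}} \ra \mf{X}$ induces an isomorphism $\varphi^{rig}$ on generic fibres, as recalled above, the functor $\mf{X} \mapsto \mf{X}^{rig}$ sends the class $S$ of admissible formal blowups to isomorphisms and therefore factors through a functor $\ol{F}$ out of the localized category. To compute in that localization I would first record that $S$ admits a calculus of fractions: the composite of two admissible blowups is again an admissible blowup of the base, any two admissible blowups of a fixed $\mf{X}$ are dominated by a common one (blow up the product of the two ideals), and admissible blowups pull back along arbitrary morphisms. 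Consequently every morphism $\mf{X} \ra \mf{Y}$ in the localized category is represented by a span $\mf{X} \la \mf{X}' \ra \mf{Y}$ whose left leg lies in $S$, and two such spans coincide if and only if they become equal after passing to a common dominating blowup.

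Essential surjectivity is the assertion that every quasiparacompact, quasiseparated rigid space $X$ admits a formal model. In the affinoid case $X = \Spa A$ I would choose a presentation $A = K\langle T_1,\dots,T_n\rangle/(g_1,\dots,g_m)$, clear denominators so that the $g_j$ lie in $K^\circ\langle T_1,\dots,T_n\rangle$, set $A_0 = K^\circ\langle T_1,\dots,T_n\rangle/(g_1,\dots,g_m)$, and then kill $\pi$-torsion to enforce flatness; the result is an admissible $K^\circ$-algebra with $A_0[1/\pi] = A$, so $\Spf A_0$ is a formal model. For general $X$ I would cover it by affinoids, build local models, and glue them, using quasiparacompactness to control the combinatorics of the cover and admissible blowups to reconcile the gluing data on overlaps; this is exactly the content of the existence results in \cite[\S II.8.4]{bosch}.

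Full faithfulness of $\ol{F}$ splits into fullness and faithfulness. For fullness, given a morphism $g : X \ra Y$ of rigid spaces together with chosen formal models $\mf{X}, \mf{Y}$, I must produce an admissible blowup $\mf{X}' \ra \mf{X}$ and a formal morphism $\mf{X}' \ra \mf{Y}$ whose generic fibre recovers $g$; the resulting span then maps to $g$ under $\ol{F}$. For faithfulness, if two spans $\mf{X} \la \mf{X}_i' \ra \mf{Y}$ induce the same $g$ on generic fibres, I must exhibit a common dominating blowup on which the two formal morphisms agree, using that a formal morphism into a fixed target is determined by its generic fibre once the source has been blown up far enough.

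The main obstacle is the extension-of-morphisms step inside fullness: a rigid morphism need not extend to the chosen integral models, and forcing it to extend requires modifying $\mf{X}$ by a blowup. This is the hard technical heart of Raynaud's theorem, and it rests on the flattening technique of Raynaud--Gruson: passing to $\mf{X} \times_{K^\circ} \mf{Y}$ and taking the formal closure $\ol{\Gamma}$ of the graph of $g$, one finds an admissible blowup $\mf{X}' \ra \mf{X}$ after which the strict transform of $\ol{\Gamma}$ becomes flat over $\mf{X}'$; being flat and an isomorphism on generic fibres, it then projects isomorphically onto $\mf{X}'$ and the composite $\mf{X}' \cong \ol{\Gamma} \ra \mf{Y}$ is the desired formal morphism. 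I would invoke this as \cite[\S II.8.4, Lemma 4]{bosch} rather than reproving it. Granting these inputs, the span in fullness and the common refinement in faithfulness assemble to show that $\ol{F}$ is fully faithful, and combined with essential surjectivity this yields the claimed equivalence.
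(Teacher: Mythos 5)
Your proposal is correct and follows essentially the same route as the paper, which gives no argument of its own but cites \cite[\S II.8.4, Theorem 3]{bosch} and records exactly your three key inputs --- existence of formal models, realization of rigid morphisms after an admissible blowup, and faithfulness of the generic-fibre functor on formal morphisms --- as Proposition \ref{properties of formal models}; your calculus-of-fractions bookkeeping is the standard way these ingredients assemble into the equivalence. One minor caveat: your aside that the morphism-extension lemma rests on Raynaud--Gruson flattening of the graph closure is not how \cite[\S II.8.4, Lemma 4]{bosch} is actually proved (it proceeds via power-bounded elements and the covering lemma, and your flattening sketch would in any case need properness of the graph closure over the source for ``flat plus isomorphism on generic fibres'' to yield an isomorphism), but since you invoke the lemma rather than reprove it, this does not affect the correctness of your argument.
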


In the course of the proof, one proves the following more concrete results \cite[\S II.8.4, Lemma 4, Lemma 5]{bosch}:

\begin{prop}\label{properties of formal models} The following statements hold:
\begin{enumerate}
\item Two morphisms $\varphi,\psi : \mf{X}_2 \to \mf{X}_1$ of admissible formal schemes are equal if and only if $\varphi^{rig}=\psi^{rig}$.

\item Let $\mf{X}_1$ and $\mf{X}_2$ be quasiparacompact admissible formal schemes and let $\sigma : \mf{X}_2^{rig} \to \mf{X}^{rig}_1$ be a morphism. Then there exists an admissible formal blowup $\varphi : \mf{X}_3 \to \mf{X}_2$ and a morphism $\psi : \mf{X}_3 \to \mf{X}_1$ such that $\psi^{rig} = \sigma \circ \varphi^{rig}$. 

\item Let $\mf{X}$ be a quasiparacompact admissible formal scheme and let $(U_i)_{i\in I}$ be a cover of $\mf{X}^{rig}$ by quasicompact opens such that, for every $i$, $U_i \cap U_j = \emptyset$ for all but finitely many $j$. Then there exists an admissible formal blowup $\varphi : \mf{X}_0 \to \mf{X}$ with a cover $(\mf{U}_i)_{i\in I}$ by quasicompact opens such that $\mf{U}_i^{rig}$ is identified with $U_i$ for all $i$ under the isomorphism $\varphi^{rig}$.

\item Any quasiparacompact and quasiseparated rigid space admits a formal model. 
\end{enumerate}
\end{prop}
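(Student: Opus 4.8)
The plan is to prove the four statements together, bootstrapping everything from the affine case, since they are genuinely interlocking (and are the technical core behind Raynaud's Theorem \ref{raynauds theorem}, so I must not invoke that theorem). The one fact I would establish first is that for an admissible $K^\circ$-algebra $A$ the map $A \to A[1/\pi]$ is injective: flatness of $A$ over the valuation ring $K^\circ$ forces $\pi$-torsion freeness, which is exactly this injectivity. Part (1) then drops out immediately. Being an equality of morphisms, the statement is local on $\mf{X}_1$ and $\mf{X}_2$, so I may assume both are affine, say $\mf{X}_i = \Spf A_i$; then $\varphi,\psi$ correspond to continuous $K^\circ$-algebra maps $A_1 \to A_2$, and $\varphi^{rig} = \psi^{rig}$ says the two composites $A_1 \to A_2 \hookrightarrow A_2[1/\pi]$ coincide, whence $A_1 \to A_2$ coincide. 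Gluing over the affine pieces is formal.

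Next I would set up the dictionary between rational subsets of a generic fibre and charts of an admissible blowup, which is the geometric engine for (2) and (3). Over $\Spf A$, an admissible blowup in an open coherent ideal $\mf{a}=(f_0,\dots,f_r)$ --- open meaning $\mf{a} \supseteq (\pi^N)$, so that the $f_i$ have no common zero on $\Spa A[1/\pi]$ --- is covered by charts on which $f_j$ generates $\mf{a}$, the $j$-th chart being the $\pi$-torsion-free completion of $A[f_0/f_j,\dots,f_r/f_j]$, and its generic fibre is exactly the rational subset $\{x : |f_i(x)| \le |f_j(x)| \text{ for all } i\}$ of $\Spa A[1/\pi]$. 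Granting this, (3) is within reach: each quasicompact $U_i \sub \mf{X}^{rig}$ is a finite union of rational subsets cut out by finitely many functions, and I would assemble all the relevant functions into a single open coherent ideal sheaf $\mf{a}$ on $\mf{X}$ --- possible precisely because the cover is locally finite, so that near each point only finitely many $U_i$ intervene --- after which $\mf{X}_0 = \mf{X}_{\mf{a}}$ carries formal opens whose generic fibres recover the $U_i$.

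For (2) I would first reduce to $\mf{X}_1 = \Spf A_1$ affine (cover it and reassemble the resulting maps using (1)). Pulling the affine cover of $\mf{X}_1^{rig}$ back through $\sigma$ gives a quasicompact open cover of $\mf{X}_2^{rig}$, which I realize by formal opens on a blowup $\mf{X}_3 \to \mf{X}_2$ using (3); this reduces me to the situation where $\sigma$ sends a formal affine $\Spf A_2$ into $\Spf A_1$. There $\sigma$ is a continuous $K$-algebra map $A_1[1/\pi] \to A_2[1/\pi]$, and the images of the topological generators of $A_1$ lie in $A_2[1/\pi]$, hence in $\pi^{-M}A_2$ for some $M$. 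Clearing these denominators is itself an admissible formal blowup of $\Spf A_2$, after which the map carries $A_1$ into $A_2$ and defines the desired $\psi$. This denominator-clearing-by-blowup --- the only place where one truly bends the formal structure to accommodate a purely rigid map --- is the technical crux, essentially a concrete instance of Raynaud--Gruson flattening, and is where I expect the real work.

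Finally, (4). For an affinoid $X = \Spa B$ I would pick a surjection $K\langle x_1,\dots,x_n\rangle \twoheadrightarrow B$ and let $A \sub B$ be the image of $K^\circ\langle x_1,\dots,x_n\rangle$: as a subring of the $\pi$-torsion-free ring $B$ it is $\pi$-torsion free, hence $K^\circ$-flat, it is topologically of finite type by construction, and $A[1/\pi]=B$, so $\Spf A$ is a formal model. For general quasiparacompact quasiseparated $X$ I would take a locally finite affinoid cover $(U_i)$, choose formal models $\mf{U}_i$, and glue: on each overlap the two induced models agree after a common admissible blowup by (2), while (1) makes the resulting transition isomorphisms unique and automatically cocycle-compatible, so the $\mf{U}_i$ glue to a formal model of $X$. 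Besides the flattening step in (2), the part I expect to demand the most care is exactly this gluing --- verifying that the blowups on overlaps can be chosen compatibly enough to descend to a single formal scheme.
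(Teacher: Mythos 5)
The first thing to note is that the paper does not actually prove this proposition: it is stated as a byproduct of Raynaud's theory and the proof is deferred to \cite[\S II.8.4, Lemmas 4 and 5]{bosch}, so your proposal can only be measured against that argument. Its broad architecture --- injectivity of $A \to A[1/\pi]$ from $K^\circ$-flatness for (1), the chart-by-chart dictionary between admissible blowups and rational covers, denominator-clearing blowups for (2), and gluing local formal models via (1) and (2) for (4) --- is indeed the architecture of the Bosch--L\"utkebohmert proof, and your part (1) and the affinoid case of (4) are essentially complete (for (1), note only that to reduce to the affine case you must first know $\varphi$ and $\psi$ agree on underlying topological spaces, which uses surjectivity of the specialization map for admissible formal schemes).

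There are, however, concrete gaps at exactly the steps you flag as the crux. First, your justification of denominator-clearing in (2) is vacuous as written: \emph{every} element of $A_2[1/\pi]$ lies in $\pi^{-M}A_2$ for some $M$, yet no admissible blowup can make $1/\pi$ a section of the structure sheaf. What makes the blowup in $(a,\pi^M)$ work for $b = a/\pi^M$ is that $b$ is power-bounded, equivalently $|b(x)| \le 1$ at every point $x$ of the generic fibre --- which holds here because a morphism of adic spaces carries power-bounded elements to power-bounded elements and the generators $x_i$ lie in $A_1 \sub (A_1[1/\pi])^\circ$ --- combined with surjectivity of specialization, which forces the chart $\{|a| \le |\pi^M|\}$ of the flat blowup to be the whole of it. None of this appears in your sketch, and without it the step is unsupported. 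Second, in the reduction for (2), preimages under $\sigma$ of quasicompact opens need not be quasicompact, nor is the pulled-back cover locally finite, so you cannot feed it to (3) directly; a refinement step is required. Third, in (3) and in the gluing for (4), ``assembling the relevant functions into a single open coherent ideal sheaf'' is not something local finiteness alone provides: the ideals produced on the affine pieces of $\mf{X}$ do not agree on overlaps, and one needs the extension lemma for admissible formal blowups (a blowup of a quasicompact open formal subscheme is dominated by the restriction of a blowup of all of $\mf{X}$; see \cite[\S II.8.2]{bosch}) to produce a single dominating blowup --- this same lemma is what makes the compatibility/cocycle argument you defer in (4) go through. Since these points are precisely the content of Bosch's Lemmas 4 and 5, the proposal is strategically sound but stops short of a proof.
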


We now wish to explain the perspective on the topological space of a quasicompact and quasiseparated rigid space $X$ offered by the theory of formal models. Let $(\mf{X}_i)_{i\in I}$ be the category of formal models of $X$. More precisely, the objects are pairs $(\mf{X}_i,f_i)$ where $\mf{X}_i$ is an admissible formal scheme and $f_i : \mf{X}_i^{rig} \to X$ is an isomorphism ($I$ is simply an indexing set, included for notational convenience). A morphism $\varphi : (\mf{X}_i,f_i) \to (\mf{X}_j,f_j)$ is a morphism $\varphi : \mf{X}_i \to \mf{X}_j$ satisfying $f_j \circ \varphi^{rig} = f_i$. 

\begin{exer}
Prove (using Proposition \ref{properties of formal models}) that the category of formal models is an inverse system. In particular, there is at most one morphism $\varphi : (\mf{X}_i,f_i) \to (\mf{X}_j,f_j)$ for any $i,j\in I$.
\end{exer}

With this observation, we can form the inverse limit $\varprojlim_i \mf{X}_i$ and take the inverse limit of the specialization maps
\[
\spe =(\spe_i)_{i\in I} : X \to \varprojlim_i \mf{X}_i.
\]
Here we have written $\spe_i := \spe_{\mf{X}_i}$. We then have the following theorem (cf. \cite[Theorem 2.22]{scholze-perfectoid}):.

\begin{theo}
The map $\spe$ is a homeomorphism, and extends to an isomorphism of locally ringed spaces $(X,\oo_X^+) \cong \varprojlim_i (\mf{X}_i, \oo_{\mf{X}_i})$.
\end{theo}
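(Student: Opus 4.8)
The plan is to show separately that $\spe$ is a homeomorphism and then that it upgrades to an isomorphism of locally ringed spaces, reducing both to the quasicompact (indeed affinoid) case via the results of Proposition \ref{properties of formal models}. Since $X$ is quasicompact and quasiseparated, I would first reduce to checking the statement on a cover by quasicompact opens and their pairwise intersections, using that both sides are built by gluing and that $\spe$ is compatible with passing to opens. The heart of the matter is therefore the local statement, where $\mf{X}_i = \Spf A_i$ is affine admissible and the generic fibre is the affinoid $\Spa A_i[1/\pi]$.

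First I would prove that $\spe$ is \emph{surjective} and \emph{injective}. For surjectivity, a point of $\varprojlim_i \mf{X}_i$ is a compatible family of open prime ideals $\p_i \sub \oo_{\mf{X}_i}$; I would construct a valuation $x \in X$ specializing to each $\p_i$ by using that the $\p_i$ cut out, in the limit, a valuation ring, and that each admissible formal blowup refines the previous specialization data. For injectivity I would use that two distinct points $x \neq y$ of $X$ are separated by some rational subset, and by Proposition \ref{properties of formal models}(3) any such quasicompact open is realized (after a formal blowup) as the generic fibre of an open formal subscheme $\mf{U}_i \sub \mf{X}_i$; hence $\spe_i(x)$ and $\spe_i(y)$ land in different opens of some $\mf{X}_i$, so the images differ in the limit. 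This is exactly the mechanism that makes the cofinal system of blowups separate points.

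Next I would upgrade bijectivity to a homeomorphism. Both $X$ and each $\mf{X}_i$ are spectral (hence so is the inverse limit, as an inverse limit of spectral spaces along spectral maps), and $\spe$ is continuous by construction. Since a continuous bijection of spectral spaces that is surjective on the level of rational/quasicompact opens is a homeomorphism, it suffices to check that the quasicompact opens of $\varprojlim_i \mf{X}_i$, which are pulled back from quasicompact opens of the individual $\mf{X}_i$, correspond bijectively to the quasicompact (equivalently, the rational) opens of $X$. This correspondence is precisely Proposition \ref{properties of formal models}(3): every quasicompact open $U \sub X$ arises as $\mf{U}_i^{rig}$ for some open $\mf{U}_i \sub \mf{X}_i$ after a suitable blowup, and conversely $\mf{U}_i^{rig}$ is always quasicompact open. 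Thus $\spe$ identifies the two lattices of opens and is a homeomorphism.

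Finally, for the structure sheaves I would compare $\oo_X^+$ with $\varprojlim_i \oo_{\mf{X}_i}$ locally. On the affinoid piece $X = \Spa A_i[1/\pi]$ with formal model $\Spf A_i$, the specialization map of locally ringed spaces sends $\oo_{\mf{X}_i}$ to $\oo_X^+$, and the claim is that taking the limit over all blowups recovers $\oo_X^+$ exactly — intuitively, $\oo_X^+(U) = \{ f \in \oo_X(U) : |f| \leq 1 \}$ is the colimit of the integral structure sheaves of all formal models, since finer and finer blowups see the integral functions on smaller and smaller opens. I expect \textbf{this last step to be the main obstacle}: one must show that the natural map $\varinjlim_i (\spe_i)_\ast \oo_{\mf{X}_i} \to \oo_X^+$ is an isomorphism of sheaves on $X$, which requires knowing that every integral section over a rational subset is defined on some formal model and that the value groups match up correctly under specialization (Remark \ref{formal schemes and adic spaces}). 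I would handle this by working on a cofinal system of rational covers, invoking that each rational subset is a generic fibre of a formal open after blowup, and checking the statement ring-by-ring using the description of $\spe_{\mf{X}}$ in terms of $\{ f : |f(x)| < 1 \}$; the quasicompactness of $X$ keeps all the relevant limits and colimits under control.
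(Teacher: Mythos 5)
The genuine gap is in your surjectivity argument. You assert that a compatible family of open prime ideals $(\p_i)_i$ ``cuts out, in the limit, a valuation ring'' giving a point of $X$ that specializes to each $\p_i$. That assertion is exactly the hard content of the Raynaud--Fujiwara--Kato identification of the generic fibre with the Zariski--Riemann space of all formal models: to make it precise one would have to pass to the colimit of the local rings $\oo_{\mf{X}_i,\p_i}$ and show, for instance, that for every relevant function $f$ some admissible blowup (of an ideal of the shape $(f,\pi^n)$) makes $f$ or $f^{-1}$ integral at the new center, so that the colimit is dominated by a valuation ring whose valuation is continuous, satisfies $|\pi|\neq 0$, and actually lies in $X$. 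None of this is sketched, and it is not routine. The paper avoids the issue entirely with a soft compactness argument: for $x=(x_i)_i$ in the limit, the fibres $X_i = \spe_i^{-1}(x_i)$ are nonempty, closed in the constructible topology of the spectral space $X$, and form a filtered family (the index category is cofiltered), so $\spe^{-1}(x)=\bigcap_i X_i$ is nonempty by compactness of the constructible topology. I would replace your valuation-theoretic construction by this argument; as written, your key step is an unproven claim whose proof is harder than the theorem's.

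The rest of your proposal is essentially the paper's proof. Injectivity is identical: separate two points of the $T_0$ space $X$ by a quasicompact open $U$, and use Proposition \ref{properties of formal models}(3) to write $U=\spe_i^{-1}(\mf{U})$ for a quasicompact open $\mf{U}\sub\mf{X}_i$. Openness is the same mechanism: $\spe(U)=\pi_i^{-1}(\mf{U})$ is open in the limit topology, so the bijection identifies the lattices of quasicompact opens, as you say. Two smaller remarks. First, your preliminary reduction to affinoid pieces is unnecessary -- the paper argues globally -- and it hides a subtlety of its own: the restrictions to an open $U\sub X$ of formal models of $X$ form only a cofinal subsystem of all formal models of $U$, which would need justification. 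Second, for the extension to an isomorphism of locally ringed spaces the paper gives no proof either (it is Exercise \ref{sp as iso of ringed spaces}); you correctly isolate the key point, namely that every section of $\oo_X^+$ over a rational subset is defined on some formal model, but, as you acknowledge, you leave it unproven, so this part of your proposal is on the same footing as the paper's.
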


\begin{proof}
We prove that $\spe$ is a homeomorphism; the remaining part is left as an exercise (see below). For surjectivity, consider $x = (x_i)_i \in \varprojlim_i \mf{X}_i$. Put $X_i = \spe^{-1}_i(x_i)$; then $\spe^{-1}(x) = \bigcap_i X_i$. The sets $X_i$ are closed in the constructible topology and nonempty, so $\spe^{-1}(x)$ is nonempty and hence $\spe$ is surjective. For injectivity, let $x,y \in X$. Since $X$ is a $T_0$-space, without loss of generality we can choose a quasicompact open $U\sub X$ with $x\in U$, $y\notin U$. By Proposition \ref{properties of formal models}(3) there exists an $i$ and a quasicompact open $\mf{U} \sub \mf{X}_i$ such that $U = \spe^{-1}_i(\mf{U})$. Then $\spe_i(x) \in \mf{U}$ and $\spe_i(y) \notin \mf{U}$, so $\spe(x) \neq \spe(y)$ as desired. Finally, it remains to prove that $\spe$ is open. Let $\pi_i : \varprojlim_i \mf{X}_i \to \mf{X}_i$ be the projection. Let $U\sub X$ be a quasicompact open and let $i$ and $\mf{U} \sub \mf{X}_i$ be as in the proof of injectivity. Then $\spe_i = \pi_i \circ \spe$ and hence $\spe(U) = \spe(\spe_i^{-1}(\mf{U})) = \pi^{-1}(\mf{U})$, which is open by definition of the inverse limit topology. Therefore $\spe$ is open, finishing the proof that $\spe$ is a homeomorphism.
\end{proof} 

\begin{exer}\label{sp as iso of ringed spaces}
Show that $\spe$ extends to an isomorphism $(X,\oo_X^+) \cong \varprojlim_i (\mf{X}_i, \oo_{\mf{X}_i})$ of locally ringed spaces. Does the theorem extend to quasiparacompact and quasiseparated rigid spaces?
\end{exer}

One may view this as saying that formal models provide ``finite approximations'' to rigid spaces. This is often desirable: The adic space contains a huge amount of information, and while this is sometimes an advantage, it can also make them unwieldy. Formal models can provide an excellent way to analyse a rigid space. We will see an example of this in the next lecture when we discuss uniformization of curves. 

Let us discuss a few more aspects of the equivalence in Theorem \ref{raynauds theorem}. Proposition \ref{properties of formal models} asserts, in particular, that any morphism $f : X\to Y$ of quasiparacompact and quasiseparated rigid spaces has a formal model $\mf{f} : \mf{X} \to \mf{Y}$. A basic question is then whether $\mf{f}$ inherits properties of $f$, or at least can be made to do so after changing $\mf{f}$ to a different formal model ($\mf{f}$ is certainly not unique). Let us give a few key examples of how this can work out: 

\begin{itemize}
\item As mentioned in Lecture 2, properness of $\mf{f}$ is equivalent to properness of $f$.

\item If $f$ is \'etale (or smooth), then it can happen that there is no formal model $\mf{f}$ which is \'etale (or smooth). Indeed, this is typically the case (for a simple, zero-dimensional example, consider a ramified extension of local fields). This should not be alarming and hopefully feel familiar. For example, the reader familiar with elliptic curves over local fields will know that not all elliptic curves have even potentially good reduction. In particular, we cannot make the problem go any by extending the base field. We will see this implicitly in the next lecture when we discuss the Tate curve.

\item If $f$ is flat, then it is a remarkable and difficult theorem that $\mf{f}$ can be chosen to be flat. See \cite[\S II.9.4, Theorem 1]{bosch} and the references there.
\end{itemize}

Let us also say a few more words about formal schemes as adic spaces, adding to Remark \ref{formal schemes and adic spaces}. As mentioned there, an affine formal scheme $\Spf A$ with $A$ of topologically finite type over $K^\circ$ defines an adic space $\Spa A$ ($A$ is sheafy by \cite[Theorem 2.2]{hu-generalization} in the Noetherian case and \cite[Corollary 1.2]{zavyalov-sheafiness} in general), and this procedure glues to general formal schemes locally of topologically finite type over $K^\circ$. Given such a formal scheme $\mf{X}$, let us denote the associated adic space by $\mf{X}^{ad}$. Then the generic fibre $\mf{X}^{rig}$ of $\mf{X}$ introduced above is just the open subset $\pi \neq 0$ of $\mf{X}^{ad}$. In particular, in the world of adic spaces, the generic fibre is just the generic fibre in the same way as in the world of schemes. These remarks extend to the more general generic fibre functor introduced by Berthelot \cite[\S 7]{dejong}.

We finish this lecture by illustrating the theory above through an example, the closed unit disc. Though simple, this example is still very rich and illustrates many of the general features of theory. We refer to Bergdall's lectures \cite[\S 4]{bergdall-heid} for a thorough account of the closed unit disc. 

\begin{exer}\label{unit disc and formal models}
Consider the formal scheme $\mf{X} = \Spf K^\circ \langle T \rangle$, which is a formal model for the closed unit disc $X = \Spa K\langle T \rangle$. Blowing up $\mf{X}$ in $\mf{a} = (\pi,T-a)$, for $a\in K^\circ$, creates a new formal model of $X$ whose special fibre has two irreducible components, one isomorphic to $\A^1_k$ and one isomorphic to $\mb{P}^1_k$. One may iterate this to produce a cofinal set of formal models of $X$. Compare this to the description of the topological space of $X$ below and from Bergdall's lectures \cite[\S 4]{bergdall-heid}.  
\end{exer}

For the reader's convenience, and to introduce how we will draw schematic pictures of adic spaces in the next lecture, we include the following sketch of the topological space of $X = \Spa K\langle T \rangle$.

\vspace{0.5 cm}

\begin{figure}[htp]
\centering
\includegraphics{./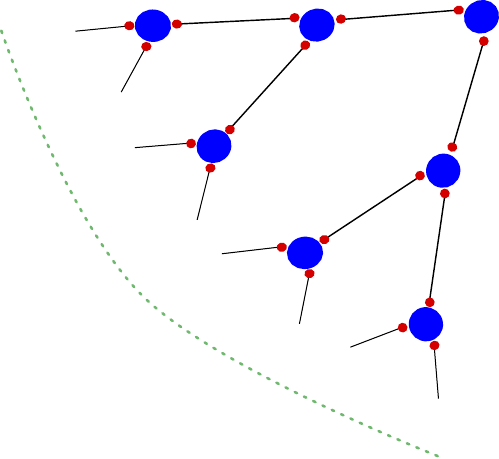}
\caption{A drawing of the closed unit disc $\Spa K\langle T \rangle$.}
\end{figure}

The reader may compare this to \cite[Example 2.20]{scholze-perfectoid}; we use the same terminology for the different types of points on $X$ (though we will ignore points of type 3 and 4). For natural reasons, the picture is a discretized version of the reality. Blue points are type $2$ points (rank $1$ points attached to closed discs inside $X$ of positive radius) and red points are the associated type $5$ points (of rank $2$). The black lines symbolize connective segments (which themselves, in reality, contain both type $2$ and type $5$ points and branch infinitely). The green points are the ends of the tree; these are the classical points from Tate's rigid geometry, corresponding to the maximal ideals of $K\langle T \rangle$, also called type $1$ points. In future pictures we will not include the type $1$ points. Finally, we encourage the interested reader to try to understand the local rings and residue fields of the various point of the closed unit disc, and to relate them to the formal models described in Exercise \ref{unit disc and formal models}.

\newpage

\section{Lecture 4: Uniformization of curves}

In this lecture, we will talk about uniformization, in the topological sense, of rigid spaces. Recall that a map $f : X \to Y$ of topological spaces is called a covering map if, for every $y\in Y$, there exists an open neighbourhood $V$ of $y$ such that $f^{-1}(V)$ is a disjoint union $\bigsqcup_{i\in I} U_i$ of open subsets $U_i \sub X$ such that $f|_{U_i} : U_i \to V$ is a homeomorphism for all $i$. We make the following definition.

\begin{defi}
We say that a map $f : X \to Y$ of (general) adic spaces is a covering map if for every $y\in Y$, there exists an open neighbourhood $V$ of $y$ such that $f^{-1}(V)$ is a disjoint union $\bigsqcup_{i\in I} U_i$ of open subsets $U_i \sub X$ such that $f|_{U_i} : U_i \to V$ is an isomorphism of adic spaces for all $i$.
\end{defi}

Note that if $f : X \to Y$ is a covering map of adic spaces, then $|f| : |X| \to |Y|$ is a covering map of topological spaces. Conversely, one has the following observation:

\begin{exer}
Let $Y$ be an adic space and let $g : S \to |Y|$ be a covering map of topological spaces. Show that there is a unique covering map $ f : X \to Y$ of adic spaces such that $|X|=S$ and $|f|=g$.
\end{exer}

The converse is slightly more subtle. For our purposes, the following observation suffices.

\begin{exer}
Assume that $X$ is an adic space with the action of a group $G$ such that $G$ acts freely and properly discontinuously on $|X|$. Let $T=|X|/G$ be the quotient and let $f : |X| \to T$ be the quotient map. Show that there is a unique covering map $f : X\to Y$ of adic spaces such that $|Y|=T$ and $f=|g|$.
\end{exer} 

We will use these observations to conflate coverings of an adic space and coverings of its underlying topological space, and we will do so without further comment. In algebraic geometry, the topological notion of a covering space is not very helpful. For example, a map between irreducible varieties is a covering map if and only if it is an isomorphism. This lead to the introduction of \'etale maps as a substitute. On the other hand, the notion of \'etale coverings is superfluous in the context of complex manifolds, as \'etale coverings are automatically topological coverings. In nonarchimedean geometry, the situation is somewhere in between. In particular, adic spaces have rich enough topology to allow for interesting examples of topological covering maps. We will explore this for smooth projective curves, starting with the genesis of rigid geometry, the Tate curve. To begin with, our goal is to give examples of covering maps which arise as quotients by discrete groups, and after that we discuss how to construct covering maps starting from a curve.

To construct Tate curves, consider the rigid space $\Gm$ over a nonarchimedean field $K$, which is the analytification of the usual scheme $\Gm = \Spec K[T,T^{-1}]$. We will never again use the algebraic $\Gm$, so we will simply write $\Gm$ for what we might otherwise denote $\Gm^{an}$. Explicitly, it is the union
\[
\Gm = \bigcup_{n=0}^\infty \Spa K\langle \pi^n T, \pi^n T^{-1} \rangle
\]
where $\pi \in K$ is a pseudouniformizer. Let $q \in K$ satisfy $0< |q| < 1$. The maps $T \mapsto q^m T$ define automorphisms of $\Gm$, giving a group homomorphism $\Z \to \Aut(\Gm)$. 

\begin{prop}
The action of $\Z$ on $\Gm$ is free and properly discontinuous, so we may take the quotient. We call the quotient the Tate curve (with parameter $q$), and denote it by $E_q$. Moreover, $E_q$ is proper and connected. 
\end{prop}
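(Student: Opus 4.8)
The plan is to treat the three assertions in turn: first construct the quotient, then dispatch connectedness and quasicompactness, and finally (the substantive point) properness.

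\textbf{Freeness and proper discontinuity.} The key tool is the continuous ``radius'' map $\rho : \Gm \to \R$ recording $\log|T(x)|$ (made precise by composing with the retraction onto the standard skeleton, or simply recording the value $|T(x)|$ of the coordinate). The generator $\g_1 : T \mapsto qT$ multiplies $|T(x)|$ by $|q|$, so $\rho$ is equivariant for the $\Z$-action on $\R$ by translation by the nonzero number $\log|q|$. Freeness is then immediate: if $\g_m : T \mapsto q^m T$ fixes a point $x$, then $|q|^m = 1$, forcing $m = 0$ since $0 < |q| < 1$. For proper discontinuity I would use open annuli of logarithmic width at most $\log|q|^{-1}$: such an annulus is disjoint from all of its nonzero $\Z$-translates (consecutive translates abut only along excluded boundary circles), and every point of $\Gm$ lies in one. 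Hence each point has a neighbourhood $U$ with $\g_m(U) \cap U = \emptyset$ for all $m \neq 0$, so the action is free and properly discontinuous. The exercise on quotients by free, properly discontinuous group actions then equips the topological quotient $|\Gm|/\Z$ with the structure of an adic space $E_q$, together with a covering map $\pi : \Gm \to E_q$ which is a local isomorphism.

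\textbf{Connectedness and quasicompactness.} The annuli $\Spa K\langle \pi^n T, \pi^n T^{-1}\rangle$ are connected and nested, so $\Gm$ is connected; since $\pi$ is continuous and surjective, $E_q$ is connected. Moreover, the closed annulus $C = \{ |q| \le |T| \le 1\}$ is affinoid, hence quasicompact, and meets every $\Z$-orbit, so $\pi(C) = E_q$ is quasicompact.

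\textbf{Properness.} I would deduce properness from a formal model, using the equivalence recalled in Lecture 2 that a formal model is proper (i.e.\ has proper special fibre) if and only if its rigid generic fibre is proper. Start from the admissible formal model $\mf C = \Spf K^\circ\langle T, S\rangle/(TS - q)$ of the closed annulus $C$, whose special fibre is $\Spec k[T,S]/(TS)$, a union of two affine lines crossing in a single node. The two boundary circles $\{|T|=1\}$ and $\{|T| = |q|\}$ of $C$ are exchanged by $\g_1$, which on special fibres matches $\bar S = \bar T^{-1}$; gluing formal neighbourhoods of these circles along $\g_1$ produces an admissible formal $K^\circ$-scheme $\mf E$ with $\mf E^{rig} \cong E_q$. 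On special fibres the gluing turns the two crossing affine lines into a $\mathbb P^1_k$ with its points $0$ and $\infty$ identified, i.e.\ a nodal rational cubic, which is proper over $k$. Hence $\mf E \to \Spf K^\circ$ is proper, and therefore $E_q \to \Spa K$ is proper (in the sense of Definition \ref{prop: rigid}, and equivalently of Definition \ref{sep and prop: adic}).

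\textbf{Main obstacle.} The genuine work is the properness step: one must carefully build $\mf E$ by gluing (checking admissibility and that its rigid generic fibre is exactly $E_q = \Gm/\Z$), and identify its special fibre as the nodal cubic in order to invoke the scheme-theoretic properness criterion. An alternative that bypasses formal models is to descend partial properness through the valuative criterion of Definition \ref{sep and prop: adic}: a rank-one test diagram into $E_q$ lifts along the covering $\pi$ to $\Gm$, which is partially proper as the analytification of the affine variety $\Gm$, so the lift extends over $\Spa(K,K^+)$ and pushes back down. Here the delicate point is \emph{uniqueness} of the extension, which requires knowing that the covering $\pi$ is separated so that the lift of the test diagram is itself unique. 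Combined with the quasicompactness established above (and Proposition \ref{qf and proper implies finite} if one wishes to re-derive finiteness statements locally), either route yields that $E_q$ is proper.
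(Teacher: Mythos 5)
Your handling of freeness, proper discontinuity, connectedness and quasicompactness is essentially the paper's argument: the paper covers $\Gm$ by the annuli $U_n = \{ |q|^{(n+1)/2} \leq |T| \leq |q|^{n/2} \}$ of logarithmic width $\tfrac12$, observes that $m\cdot U_n \cap U_n = U_{2m+n}\cap U_n = \emptyset$ for $m\neq 0$, and gets quasicompactness from the two images $\pi(U_0),\pi(U_1)$. The genuine divergence is properness, and there your proposal is much heavier than what the paper does. The paper verifies Kiehl's Definition~\ref{prop: rigid} directly and elementarily: it introduces the slightly larger annuli $V_0 = \{ |q|^{3/5} \leq |T| \leq |q|^{-1/5}\}$ and $V_1 = \{ |q|^{6/5} \leq |T| \leq |q|^{2/5}\}$, which still map isomorphically into $E_q$ because their logarithmic width is $<1$, and checks that $U_i \sub V_i$ is relatively compact over $K$; that is the entire proof (separatedness being left as an exercise). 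What the direct verification buys is that no formal models and no valuative criteria are needed at all.

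Two caveats on your routes, beyond their weight. For Route A: gluing the annulus model to \emph{itself} along its two boundary circles is a quotient construction, not a gluing of distinct spaces along opens, and making it produce an admissible formal scheme requires real care (this is precisely why the paper's Lecture 4 builds $\mf{E}$ from \emph{two} annuli); moreover the equivalence ``formal model proper $\Leftrightarrow$ generic fibre proper'' is exactly the Huber/Temkin package that the paper flags as deep, and it presupposes separatedness of $E_q$, which you do not establish. For Route B: the valuative criterion yields properness in the sense of Definition~\ref{sep and prop: adic}, whereas the paper proves properness in the sense of Definition~\ref{prop: rigid}; the implication from the rigid notion to the adic one is the easy Exercise~\ref{Kiehl proper implies proper}, but the direction you would need in order to call the two ``equivalent'' is the hard one that the paper says requires formal models, so Route B as stated proves a formally different assertion. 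Also, inside Route B the right mechanism is not ``separatedness of $\pi$'': what one actually uses is that any morphism $\Spa(K',K'^+) \to E_q$ lands in a single trivializing open of the covering, because every point of its image generalizes the image of the closed point and open subsets of adic spaces are stable under generalization; with that lemma both existence and uniqueness of extensions descend from the partial properness of $\Gm$.
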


\begin{proof}
We need to show that every $x \in \Gm$ has a neighbourhood $U$ such that $m\cdot U \cap U \neq \emptyset$ for $m\in \Z$ implies that $m=0$. Consider the covering of $\Gm$ by open affinoid subsets $U_n = \{ |q|^{(n+1)/2} \leq |T| \leq |q|^{n/2} \}$, $n\in \Z$. Note that $U_n \cap U_k \neq \emptyset$ if and only $|n-k|\leq 1$. Choose $n$ such that $x\in U_n$. Then
\[
m\cdot U_n \cap U_n = U_{2m+n} \cap U_n,
\]
and these intersect if and only if $|(2m+n)-n|\leq 1$, i.e. $m=0$, as desired. Since $\Gm$ is connected, it is clear that $E_q$ is connected. It remains to prove that $E_q$ is proper. Let $\pi : \Gm \to E_q$ denote the quotient map. Quasicompactness is clear since $E_q$ is covered by the two affinoid opens $\pi(U_0)$ and $\pi(U_1)$. Moreover, consider the affinoid opens
\[
V_0 = \{ |q|^{3/5} \leq |T| \leq |q|^{-1/5}, \,\,\,\, V_1 = \{ |q|^{6/5} \leq |T| \leq |q|^{2/5} \}
\]
inside $\Gm$. They are mapped isomorphically into $E_q$ and $U_i \sub V_i$ is relatively compact, so $E_q$ is proper (we leave it as an exercise to check that $E_q$ is separated).
\end{proof}

We also have a group structure of $E_q$.

\begin{lemm}
$E_q$ inherits a commutative group structure from $\Gm$.
\end{lemm}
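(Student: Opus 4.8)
The plan is to descend the commutative group structure on $\Gm$ (coming from its being the analytification of the algebraic group $\Gm$) along the quotient map $\pi : \Gm \to E_q$. Recall that the group structure on $\Gm$ is encoded by three morphisms of rigid spaces: multiplication $m : \Gm \times \Gm \to \Gm$, inversion $\iota : \Gm \to \Gm$, and the unit $e : \Spa(K) \to \Gm$. The crucial observation is that the action of $\Z$ on $\Gm$ is translation by the subgroup $q^\Z = \{ q^n : n \in \Z \} \subset \Gm(K)$, so that $E_q = \Gm / q^\Z$ is literally the quotient of an abelian group by a subgroup; the task is only to check that the operations descend to morphisms of adic spaces and that they continue to satisfy the group axioms.

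First I would record the universal property of the quotient that we need: since $\pi$ is the quotient by a free and properly discontinuous action, a morphism $E_q \to Z$ of adic spaces is the same thing as a $q^\Z$-invariant morphism $\Gm \to Z$ (this is the defining feature of the quotient, and follows from the covering-map formalism set up in the exercises above). To define multiplication on $E_q$ I would then check that $\pi \circ m$ is invariant for the product action of $q^\Z \times q^\Z$ on $\Gm \times \Gm$: indeed $m(q^a x, q^b y) = q^{a+b}\, m(x,y)$, and $q^{a+b} \in q^\Z$, so applying $\pi$ kills the discrepancy. Identifying the source quotient $(\Gm \times \Gm)/(q^\Z \times q^\Z)$ with $E_q \times E_q$, this produces a morphism $\overline{m} : E_q \times E_q \to E_q$. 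The same computation, $\iota(q^a x) = q^{-a}\iota(x)$, shows $\pi \circ \iota$ is $q^\Z$-invariant and descends to $\overline{\iota} : E_q \to E_q$, while $\pi \circ e$ gives the unit.

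Finally I would verify the group axioms. Each axiom (associativity, commutativity, left and right unit, inverse) is a commutative diagram of morphisms out of a power $\Gm^{\times k}$; since the corresponding diagram commutes for $\Gm$ and the covering maps $\Gm^{\times k} \to E_q^{\times k}$ are epimorphisms (being surjective and locally isomorphisms, two morphisms out of $E_q^{\times k}$ agreeing after precomposition with the covering must agree), the descended diagrams commute as well. Commutativity of $\overline{m}$ is immediate from commutativity of $m$ upstairs.

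The main obstacle is the functoriality of the quotient with respect to products: one must establish that the product of the covering maps $\pi \times \pi : \Gm \times \Gm \to E_q \times E_q$ is again a covering map, and more precisely that it realizes $E_q \times E_q$ as the quotient of $\Gm \times \Gm$ by the free product action of $q^\Z \times q^\Z$ (and similarly for higher powers). This requires knowing that fibre products of adic spaces exist and are compatible with passing to these quotients, i.e. that $(\Gm \times \Gm)/(q^\Z \times q^\Z) \cong (\Gm/q^\Z) \times (\Gm/q^\Z)$; granting this, together with the universal property of the quotient, the descent of the operations and the verification of the axioms are purely formal.
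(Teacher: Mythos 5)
Your proposal is correct in outline, but it takes a genuinely different route from the paper. You descend the structure maps $m$, $\iota$, $e$ along $\pi$ and verify the group axioms using that covering maps are epimorphisms; the crux, as you yourself identify, is the compatibility $(\Gm \times \Gm)/(q^\Z \times q^\Z) \cong E_q \times E_q$. The paper instead argues functorially: $\Gm$ represents the functor $X \mapsto \oo(X)^\times$ on the category of rigid spaces with the analytic topology, and $E_q$ represents the \emph{sheafification} of $X \mapsto \oo(X)^\times/q^\Z$; the only thing to check is that a morphism $X \to E_q$ lifts locally on $X$ to a morphism to $\Gm$, uniquely up to translation by a power of $q$, which is exactly the covering-map property of $\pi$. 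Since that functor takes values in abelian groups, Yoneda then produces the group object structure on $E_q$ with no further work -- in particular, the paper never has to compare a quotient of $\Gm \times \Gm$ with $E_q \times E_q$, which is precisely the step your argument must supply. Be aware that this step is not purely formal in the adic category, because $|X \times_K Y|$ is not $|X| \times |Y|$, so ``free and properly discontinuous'' does not transfer to the product action by point-set topology alone. It can nevertheless be proved: the opens $V \times_K V'$, for $V, V'$ connected trivializing opens for $\pi$, cover $E_q \times_K E_q$; the preimage $(\pi\times\pi)^{-1}(V \times_K V')$ decomposes as the disjoint union of the sheets $U_i \times_K U_j'$ (fibre products of adic spaces commute with disjoint unions and with passing to open subspaces); and $q^\Z \times q^\Z$ permutes these sheets simply transitively. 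This gives both the covering property of $\pi \times \pi$ and the universal property of the quotient that your descent of $m$ requires (the same deck-transitivity argument also justifies the universal property of $\pi$ itself, which you invoked as given). In summary, your approach buys a concrete, equation-level construction of the group law at the cost of this product-compatibility lemma, while the paper's functorial approach buys the group structure formally at the cost of introducing sheafification and a representability claim.
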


\begin{proof}
We use the functorial perspective. Let $\mathrm{Ab}$ be the category of abelian groups and let $\mc{C}$ be the category of rigid spaces over $K$, viewed as a site with respect to the analytic topology (i.e. $(U_i \to X)_{i\in I}$ is a cover of $X$ if the maps $U_i \to X$ are open immersions and $X = \bigcup_{i\in I} U_i$). Then $\Gm$ represents the functor $\mc{C} \to \mathrm{Ab}$ defined by $X \mapsto \oo(X)^\times$. We claim that $E$ represents the functor $\mc{C} \to \mathrm{Ab}$ given as the sheafification of
\[
X \mapsto \oo(X)^\times / q^{\Z}.
\] 
This basically boils down to showing that a morphism $X \to E_q$ can be lifted, locally on $X$, to a morphism to $\Gm$, unique up to translation by a power of $q$. But this is clear since $\Gm \to E_q$ is a covering map (given by factoring out translations of powers of $q$).
\end{proof}

So, we have showed that $E_q$ is a proper, one-dimensional, commutative group rigid space. In analogy with complex tori, we might expect that $E_q$ is the analytification of an elliptic curve. This is true, and can be proved in a few different ways (analogous to the complex case). We summarize its properties here. Recall the power series expansion
\[
j(q) = q^{-1} + 744 + 196884q + \dots \in \Z \lb q \rb
\]
of the $j$-invariant from the theory of elliptic curves and modular functions.

\begin{theo}
$E_q$ is an elliptic curve with $j$-invariant $j(E_q)=j(q)$ (viewed as a convergent power series in $K$); in particular $|j(E_q)| > 1$. Conversely, if $E/K$ is an elliptic curve with $j(E) \in K$ and $|j(E)|>1$, then there is a unique $q\in K$ with $0<|q|<1$ such that $E \cong E_q$ over a finite (in fact at most quadratic) extension of $K$.
\end{theo}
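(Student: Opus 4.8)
The plan is to prove both directions through the explicit \emph{Tate parametrization}, which reduces the analytic assertions to algebra over $K$ together with a convergence analysis of a few power series in $q$. Throughout, $|q|<1$ is exactly what makes every relevant series converge in the complete field $K$.

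\textbf{The direct statement.} First I would introduce the Eisenstein-type series $s_k(q)=\sum_{n\geq 1} n^k q^n/(1-q^n)$, which lie in $\Z\lb q\rb$ and hence converge for $|q|<1$, and use them to write down the Weierstrass equation
\[
\mathcal{E}_q : y^2 + xy = x^3 - 5 s_3(q)\, x - \tfrac{1}{12}\bigl(5 s_3(q) + 7 s_5(q)\bigr)
\]
over $K$ (one checks along the way that the last coefficient is integral, so it converges). A direct computation gives discriminant $\Delta(q)=q\prod_{n\geq 1}(1-q^n)^{24}$, which is nonzero since $q\neq 0$ and each factor is a unit; thus $\mathcal{E}_q$ is a smooth projective genus-$1$ curve, i.e. an elliptic curve with origin the point at infinity. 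The same computation yields $j(\mathcal{E}_q)=j(q)=q^{-1}+744+\cdots$, and as $|q|<1$ the term $q^{-1}$ dominates, giving $|j(q)|=|q|^{-1}>1$. It then remains to identify $\mathcal{E}_q^{an}$ with the quotient $E_q=\Gm/q^{\Z}$ constructed earlier.

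To do this I would define the theta series
\[
x(u)=\sum_{n\in\Z}\frac{q^n u}{(1-q^n u)^2}-2 s_1(q),\qquad y(u)=\sum_{n\in\Z}\frac{(q^n u)^2}{(1-q^n u)^3}+s_1(q),
\]
and check that they converge on each annulus $U_n$ (splitting the sum at $n\gg 0$ and $n\ll 0$, where $q^n u\to 0$ and $(q^n u)^{-1}\to 0$ respectively), are invariant under $u\mapsto qu$ (by reindexing), and satisfy the equation of $\mathcal{E}_q$. The last point is an identity of convergent power series whose coefficients are universal in $q$, so it can be transported from the classical complex-analytic identity. This produces a morphism $\varphi:E_q\to\mathcal{E}_q^{an}$ of rigid spaces; I would then verify that $\varphi$ is a group homomorphism and an isomorphism (for instance by checking it is bijective on points and \'etale, using that both sides are proper smooth connected curves). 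Granting this, $E_q\cong\mathcal{E}_q^{an}$ is the analytification of an elliptic curve with $j(E_q)=j(q)$. (That $E_q$ is algebraic at all also follows abstractly from the algebraization of proper smooth rigid curves, but the explicit parametrization is the natural route to the formula $j(E_q)=j(q)$.) \emph{This analytic verification --- convergence, the Weierstrass identity, and the fact that $\varphi$ is an isomorphism --- is the main obstacle}; everything else is formal.

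\textbf{The converse.} Here the key point is that $q\mapsto j(q)$ is a bijection from $\{0<|q|<1\}$ onto $\{|j|>1\}$. Indeed $j(q)=q^{-1}u(q)$ with $u(q)\in 1+q\Z\lb q\rb$ a unit, so $1/j(q)=q\,u(q)^{-1}\in q\,\Z\lb q\rb$ has leading coefficient $1$; formal inversion then expresses $q$ as a power series in $1/j$ with integer coefficients, which converges whenever $|1/j|<1$, i.e. $|j|>1$, and yields the unique $q$ with $0<|q|<1$ and $j(q)=j$. Given $E/K$ with $j(E)\in K$ and $|j(E)|>1$, this produces a unique $q$ with $j(E_q)=j(E)$. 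Finally, since $|j(E)|>1\geq|1728|$ and $j(E)\neq 0$, both $E$ and $E_q$ have geometric automorphism group $\{\pm 1\}$, so they are quadratic twists of one another and therefore become isomorphic over an extension of $K$ of degree at most $2$; the uniqueness of $q$ is exactly the injectivity of $q\mapsto j(q)$ established above.
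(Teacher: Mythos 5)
Your proposal is correct: the paper itself gives no proof of this theorem, deferring entirely to Fresnel--van der Put \cite[\S 5.1]{fresnel-vanderput}, and your argument---the explicit Tate parametrization via the series $s_k(q)$, $x(u)$, $y(u)$, the formal inversion of $j(q)=q^{-1}+744+\cdots$ to get the bijection $\{0<|q|<1\}\to\{|j|>1\}$, and the quadratic-twist argument using $\Aut(E_{\ol{K}})=\{\pm 1\}$ for $j\neq 0,1728$---is essentially the standard proof found in that reference. You correctly flag the genuine work (convergence, the Weierstrass identity, and that $\varphi$ is an isomorphism of rigid spaces) rather than hiding it, and the outline of how each of these steps goes is sound.
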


We refer to \cite[\S 5.1]{fresnel-vanderput} for a proof and further discussion. In preparation for a general discussion of uniformization of curves, let us describe how one can create formal models of $E_q$ with relatively explicit special fibres. First, let us include picture of $\mb{G}_m$ and $E_q$, in the spirit\footnote{We have choosen to incorporate one more type $5$ point in this and subsequent pictures.} of the picture at the end of Lecture 3:

\begin{figure}[htp]
\centering
\includegraphics{./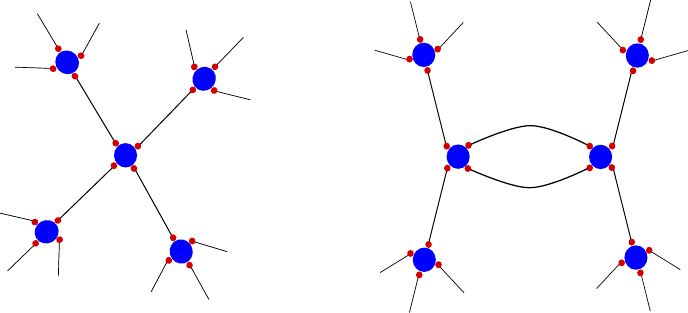}
\caption{$\Gm$ and $E_q$.}
\end{figure}

On the left we have $\mb{G}_m$: We think of the upper right type $2$ point as the Gauss point of the closed unit disc, i.e. the point corresponding to the disc $|T|\leq 1$. The lower left type $2$ corresponds to the disc $|T|\leq |q|$, and the middle type $2$ corresponds to the disc $|T|\leq |q|^{1/2}$. On the right we have $E_q$: The points corresponding to $|T|\leq |q|$ and $|T|\leq 1$ have been identified and correspond to the left middle type $2$ point in the picture of $E_q$; the right middle type $2$ corresponds to $|T|\leq |q|^{1/2}$ in $\mb{G}_m$. The upper left and lower right type $2$ points in $\mb{G}_m$ map to the right upper and right lower type $2$ points in $E_q$, respectively.

Now consider the open affinoids
\[
W_1^\prime = \{ |q| \leq |T| \leq |q|^{1/2} \}, \,\,\,\,  W_2^\prime = \{ |q|^{1/2} \leq |T| \leq 1 \}
\]
inside $\Gm$. The $W_i^\prime$ map isomorphically into $E_q$ and we set $W_i = \pi(W_i^\prime)$. The intersections between these sets is
\[
W_{12} := W_1 \cap W_2 \cong \{ |T| = |q| \} \sqcup \{ |T| = 1 \}.
\]
We illustrate these affinoids by recolouring the picture above: 

\begin{figure}[htp]
\centering
\includegraphics{./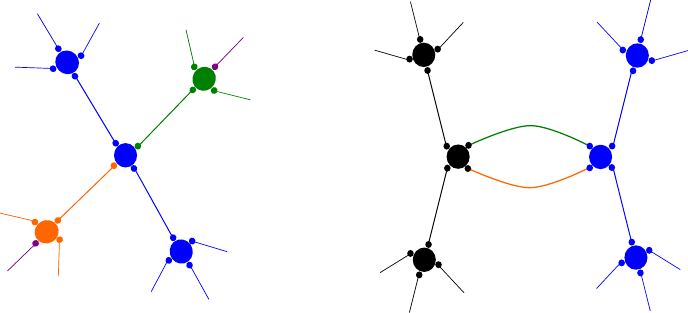}
\caption{$\Gm$ and $E_q$ with affinoids coloured.}
\end{figure}

Let us first describe $\mb{G}_m$ (on the left). The blue locus is the intersection $W_1^\prime \cap W_2^\prime$, the orange locus is $ W_1^\prime \setminus W_2^\prime$, the green locus is $W_2^\prime \setminus W_1^\prime$, and the purple locus is then $\mb{G}_m \setminus (W_1^\prime \cup W_2^\prime)$. For $E_q$ (on the right), the blue locus is the image of the blue locus from $\mb{G}_m$. The black locus is the locus that is hit by \emph{both} $W_1^\prime$ and $W_2^\prime$; the orange locus is the locus that is \emph{only} hit by $W_1^\prime$ and the green locus is the locus that is \emph{only} hit by $W_2^\prime$. 

To obtain a formal model from the covering $E_q = W_1 \cup W_2$, we start by defining affine formal schemes $\mf{W}_i = \Spf \oo(W_i)^\circ$ and $\mf{W}_{12} = \Spf \oo(W_{12})^\circ$. Before going further, let us describe these formal schemes a bit more.

\begin{lemm}\label{reductions of annuli}
Let $\lambda,\mu \in K^\times$ with $|\lambda|\leq |\mu|$ and let $X = \{ |\lambda| \leq |T| \leq |\mu|\} \sub \Gm$ be a closed annulus. Then $\oo(X) = K\langle \mu^{-1}T, \lambda T^{-1} \rangle$ and $\oo(X)^\circ = K^\circ \langle \mu^{-1}T, \lambda T^{-1} \rangle$. Let $k$ is the residue field of $K$. If $|\lambda| < |\mu|$, then the reduction of $\Spf \oo(X)^\circ$ is isomorphic to $\Spec k[s,t]/(st)$, and if $|\lambda|=|\mu|$ then the reduction of $\Spf \oo(X)^\circ$ is isomorphic to $\Spec k[s,s^{-1}]$.
\end{lemm}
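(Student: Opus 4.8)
The plan is to first identify the affinoid algebra $\oo(X)$ and its subring of power-bounded elements as explicit rings of Laurent series, and only at the very end to compute the reduction and split into the two cases. Writing $U = \mu^{-1}T$ and $V = \lambda T^{-1}$ (so that $UV = \lambda\mu^{-1}$), the first step is to recognize $X$ as a rational subset: for $N$ large enough one has $X \subseteq \Spa K\langle \pi^N T, \pi^N T^{-1}\rangle =: \Spa A$, and inside $\Spa A$ the annulus is cut out by the two conditions $|U|\leq 1$ and $|V|\leq 1$. Hence $\oo(X) = A\langle U, V\rangle$, and since $\pi^N T = (\pi^N\mu)U$ and $\pi^N T^{-1} = (\pi^N\lambda^{-1})V$ with $\pi^N\mu,\pi^N\lambda^{-1}\in K^\circ$ (this is exactly the containment condition $|\pi|^N\leq|\lambda|\leq|\mu|\leq|\pi|^{-N}$), the ambient generators are redundant and $\oo(X) = K\langle U, V\rangle$, as claimed.

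To pin down $\oo(X)^\circ$ I would use that the annulus is reduced (it is smooth and connected, hence integral), so that $\oo(X)^\circ$ is the unit ball of the supremum seminorm, together with the maximum modulus principle: for a Laurent series $f = \sum_m a_m T^m$ the sup over $X$ is attained at one of the two boundary Gauss points, giving $|f|_{\sup} = \max_m\max(|a_m||\lambda|^m,|a_m||\mu|^m)$ (the point being that $r\mapsto\max_m|a_m|r^m$ is log-convex, so its maximum on $[|\lambda|,|\mu|]$ is at an endpoint). Re-expressing in terms of $U, V$ collapses this to $|f|_{\sup} = \max(\max_m|b_m|,\max_l|c_l|)$ where $f = \sum_{m\geq 0}b_m U^m + \sum_{l\geq 1}c_l V^l$, and therefore
\[
\oo(X)^\circ = K^\circ\langle U\rangle \oplus V\cdot K^\circ\langle V\rangle
\]
as a $K^\circ$-module, with ring structure governed by $UV = \lambda\mu^{-1}$. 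This is the identification $\oo(X)^\circ = K^\circ\langle\mu^{-1}T,\lambda T^{-1}\rangle$ of the first assertion.

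The reduction is then $\oo(X)^\circ\otimes_{K^\circ}k$. Since the reduction of a one-variable Tate algebra is the polynomial ring, $K^\circ\langle U\rangle\otimes_{K^\circ}k = k[\bar u]$ and $(V\cdot K^\circ\langle V\rangle)\otimes_{K^\circ}k = \bar v\,k[\bar v]$, so as a $k$-module the reduction is $k[\bar u]\oplus\bar v\,k[\bar v]$, subject to the single relation $\bar u\bar v = \overline{\lambda\mu^{-1}}$. If $|\lambda|<|\mu|$ then $\lambda\mu^{-1}$ is topologically nilpotent, so $\bar u\bar v = 0$; the map $k[s,t]/(st)\to\oo(X)^\circ\otimes_{K^\circ}k$ with $s\mapsto\bar u$, $t\mapsto\bar v$ is a well-defined surjection, and it is an isomorphism because it matches the monomial bases $\{s^i\}_{i\geq 0}\cup\{t^j\}_{j\geq 1}$ and $\{\bar u^i\}_{i\geq 0}\cup\{\bar v^j\}_{j\geq 1}$ bijectively. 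If $|\lambda|=|\mu|$ then $c:=\overline{\lambda\mu^{-1}}\in k^\times$, so $\bar u\bar v = c$ makes $\bar u$ invertible with $\bar u^{-1}=c^{-1}\bar v$; the reduction is then spanned by $\{\bar u^m\}_{m\in\Z}$ and $s\mapsto\bar u$ gives an isomorphism $k[s,s^{-1}]\cong\oo(X)^\circ\otimes_{K^\circ}k$. Passing to $\Spec$ gives the two stated reductions.

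I expect the genuine obstacle to be the precise determination of $\oo(X)^\circ$ rather than the reduction, which is essentially formal once the $K^\circ$-module decomposition is available. That determination rests on two standard but non-trivial inputs — that $\oo(X)^\circ$ is the unit ball of the supremum seminorm for a reduced affinoid, and the maximum modulus description of that seminorm on an annulus. One could instead argue by hand that $K^\circ\langle U, V\rangle$ is integrally closed in $\oo(X)$ and that any $f$ with a $U$- or $V$-coefficient of absolute value $>1$ is not power-bounded, but invoking reducedness is cleaner. Everything else — the rational-subset computation of $\oo(X)$ and the bookkeeping in the two cases — should be routine.
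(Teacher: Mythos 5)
Your proposal is correct and takes essentially the same route as the paper's (sketched) proof: identify $\oo(X)$ with the two-variable presentation $K\langle s,t\rangle/(st-\lambda\mu^{-1})$, determine $\oo(X)^\circ$ as the corresponding integral model, and read off the special fibre in the two cases. The paper compresses the middle step into ``some calculation shows that $\oo(X)^\circ = K^\circ\langle s,t\rangle/(st-\lambda)$''; your supremum-seminorm/maximum-modulus argument, together with the explicit $K^\circ$-module decomposition $K^\circ\langle U\rangle \oplus V\cdot K^\circ\langle V\rangle$, is a valid way of filling in exactly that step.
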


\begin{proof}
We only sketch the proof. Rescaling, we may assume that $\mu=1$. Essentially by definition we have $\oo(X) = K \langle T,\lambda T^{-1} \rangle$, which we may rewrite as $K \langle s,t \rangle / (st -\lambda)$. Some calculation shows that $\oo(X)^\circ = K^\circ \langle s,t \rangle / (st -\lambda)$, and from this the statement about reductions follows.
\end{proof}

Since $E_q$ is obtained by gluing the affinoids $W_1$ and $W_2$ along the intersection $W_{12}$, we can now try to create a formal model of $E_q$ by gluing $\mf{W}_1$ and $\mf{W}_2$ along $\mf{W}_{12}$. For this to work, one has to check that the natural maps $\mf{W}_{ij} \to \mf{W}_i$ are open immersions, which we leave to the reader. This gives us a formal model $\mf{E}$ of $E_q$. Its special fibre is two projective lines intersecting in two ordinary double points (this follows from the descriptions from Lemma \ref{reductions of annuli} and the descriptions of the maps $\mf{W}_{ij} \to \mf{W}_i$). In terms of our previous picture of $E_q$, we can illustrate the specialization map from $E_q$ to $\mf{E}$ as follows:

\begin{figure}[htp]
\centering
\includegraphics{./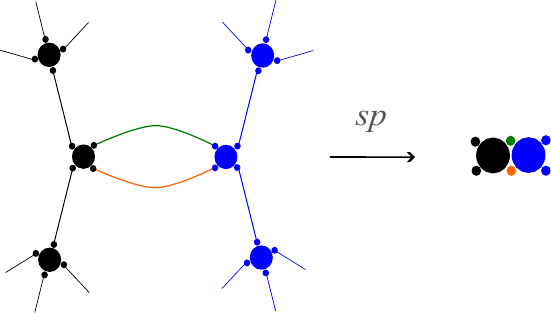}
\caption{The specialization map from $E_q$ to $\mf{E}$.}
\end{figure}

Note that in our pictures of $\mb{G}_m$ and $E_q$, each type $2$ point together with its associated type $5$ points make up the topological space of a projective line over the residue field $k$ of $K$, so the space on the right above is a (rather nonstandard) illustration of two projective lines intersecting in two points.

The formal scheme $\mf{E}$ is an example of a semistable model of a curve. We now recall this notion.

\begin{defi}
Let $X$ be a nonsingular projective curve over $K$. A semistable model of $X$ is a proper admissible formal scheme $\mf{X}$ over $K^\circ$ such that the special fibre $\ol{X}$ is reduced, any singularity is a nodal singularity\footnote{This is also called an ordinary double point singularity -- it means that the completed local ring is isomorphic to $k\lb s,t \rb / (st)$.}, and any irreducible component isomorphic to $\mathbb{P}^1$ contains at least two singular points of $\ol{X}$.
\end{defi}

Every nonsingular projective curve has a semistable model, according to the famous semistable reduction theorem.

\begin{theo}
After a finite extension of $K$, any nonsingular projective curve $X$ over $K$ admits a semistable model $\mf{X}$.
\end{theo}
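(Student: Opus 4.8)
The plan is to construct a well-behaved \emph{algebraic} model of $X$ over $K^\circ$ and then read off a formal semistable model by $\pi$-adic completion, working over a discretely valued $K$ so that $K^\circ$ is excellent (the classical setting; the general case can be reduced to it). First I would choose a projective model of $X$ inside some $\mb{P}^n_{K^\circ}$, normalize it, and apply resolution of singularities for excellent two-dimensional schemes (Lipman) to obtain a regular, proper, flat model $\mc{X} \to \Spec K^\circ$ with generic fibre $X$. A further sequence of blowups in closed points arranges that the special fibre $\mc{X} \otimes_{K^\circ} k$ is a strict normal crossings divisor $\sum_i m_i C_i$, with smooth components $C_i$ crossing transversally and multiplicities $m_i \geq 1$. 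Taking the $\pi$-adic completion $\wh{\mc{X}}$ produces an admissible formal scheme with $\wh{\mc{X}}^{rig} \cong X^{an}$ whose special fibre agrees with that of $\mc{X}$; thus the semistability conditions become conditions purely on the divisor $\sum_i m_i C_i$.

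Semistability fails, in general, on two counts: the multiplicities $m_i$ may exceed $1$, so the special fibre is non-reduced, and at a crossing the model has the local shape $\Spec K^\circ[x,y]/(xy - u\pi^{m})$ rather than the nodal form $xy = \pi$. The standard remedy is a ramified base change: I would replace $K$ by $L = K(\pi^{1/e})$ for a sufficiently divisible $e$ (for instance $e = \mathrm{lcm}_i(m_i)$), base change $\mc{X}$ to $L^\circ$, and normalize. Normalization introduces only tame cyclic quotient, i.e.\ toric, singularities, which admit an explicit combinatorial (subdivision) resolution. After resolving these and then contracting any superfluous rational components -- those meeting the rest of the fibre in fewer than two points -- one arrives at a model whose special fibre is reduced, has only nodal singularities, and satisfies the $\mb{P}^1$-condition, i.e.\ is semistable; completing then gives the desired formal model $\mf{X}$.

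The main obstacle is exactly this base-change analysis: one must prove that, after the ramified extension, normalization and toric resolution, all multiplicities can be forced down to $1$ simultaneously and that the surviving crossings are genuinely nodal. This requires a careful local study of how the singularities $xy = \pi^{m}$ behave under the substitution $\pi \mapsto \pi^{e}$ followed by normalization, organized as an induction on the $m_i$; the \emph{continued-fraction} combinatorics of toric resolutions of $A_n$-type singularities is what controls the outcome. It is here that the real content of the theorem sits, and here that passing to a finite extension of $K$ becomes genuinely necessary rather than a mere convenience.

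An alternative route I would at least mention avoids surface resolution by passing to the Jacobian $J = \mathrm{Jac}(X)$. By Grothendieck's semi-stable reduction theorem for abelian varieties, $J$ acquires semi-abelian reduction after a finite extension of $K$, the essential input being the quasi-unipotence of the monodromy action on the Tate module of $J$. The theorem of Deligne--Mumford then converts semi-abelian reduction of $J$ into the existence of a stable -- hence semistable -- model of $X$ (the genus $0$ and $1$ cases being handled directly). In this route the hard analytic input is imported wholesale from the theory of abelian varieties, and the residual difficulty is the geometric dictionary relating degenerations of the curve to those of its Jacobian.
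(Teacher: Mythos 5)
The paper does not prove this statement at all: it is quoted as ``the famous semistable reduction theorem'' (Deligne--Mumford) and used as a black box, so your proposal can only be measured against the known proofs in the literature. Against that standard, your first (and primary) route has a genuine gap at its central step. The claim that after the ramified base change $\pi \mapsto \pi^{1/e}$ the normalization ``introduces only tame cyclic quotient, i.e.\ toric, singularities'' is false precisely when the residue characteristic $p$ divides one of the multiplicities $m_i$. Locally along a component of multiplicity $m_i$ the base-changed model contains $L^\circ[x][t]/(t^e - u\,x^{m_i})$, and when $p \mid m_i$ the root extraction is inseparable: Abhyankar's lemma fails, the normalization can be inseparable over the smooth locus, the special fibre can remain non-reduced after normalizing, and the resulting singularities are not of $A_n$-type, so the continued-fraction induction you invoke has no obvious termination. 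This wild case is exactly why Deligne and Mumford did \emph{not} prove the theorem this way but went through the Jacobian, and why the purely geometric proofs that do exist (Artin--Winters, T.~Saito, Temkin) need substantially different ideas. As written, route 1 establishes the theorem only in residue characteristic zero or under a tameness hypothesis; the sentence acknowledging that ``the real content of the theorem sits'' here is not a substitute for the missing argument.

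There is a second, smaller but still real, gap: in this paper $K$ is an arbitrary nonarchimedean field, not necessarily discretely valued, so $K^\circ$ is in general non-Noetherian and Lipman's resolution does not apply to any model over it. Your parenthetical ``the general case can be reduced to it'' is not routine: the subfield of $K$ generated by the coefficients of $X$ inherits a valuation that need not be discrete, so one cannot simply descend to a complete DVR. The theorem over general $K$ is due to Bosch--L\"utkebohmert and, in greater generality, Temkin, by genuinely rigid-analytic methods. Your second route (semi-abelian reduction of $\mathrm{Jac}(X)$ via Grothendieck's quasi-unipotence theorem, then the Deligne--Mumford dictionary between degenerations of the curve and of its Jacobian) is the correct classical proof over a DVR and should really be the primary argument rather than an afterthought; but note that it, too, leaves the reduction from general $K$ to the discretely valued case unaddressed.
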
 

\begin{exer}
By adapting the construction above, convince yourself that $E_q$ has a semistable model with three irreducible components ``intersecting in a triangle''. By inspecting the structure of the adic space of $\Gm$, convince yourself that any irreducible component of any semistable model of $E_q$ must have genus $0$.
\end{exer}

For the remainder of this lecture we will discuss uniformization of arbitrary nonsingular projective curves. To do this, it is beneficial to use the (topological space of the) Berkovich space. We give a general topological definition of ``Berkovich spaces'', following \cite{henkel}.

\begin{defi} Let $X$ be a locally spectral space. 
\begin{enumerate}
\item We say that $X$ is a valuative space if, for any $x\in X$, the set of generalizations of $x$ is totally ordered under the specialization/generalization relation. In particular, this implies that any $x\in X$ has a unique maximal generalization (exercise below). 

\item Let $X$ be a valuative space and write $X^B \sub X$ for the subset of maximal points (i.e. those that don't have any proper generalizations). Since any $x\in X$ has a unique maximal generalization this determines a function $X \to X^B$ whose restriction to $X^B$ is the identity. We equip $X^B$ with the quotient topology with respect to $X \to X^B$.
\end{enumerate}
\end{defi}

\begin{exer}
Prove the claim that every point in a valuative space has a unique maximal generalization.
\end{exer}

In \cite{henkel}, $X^B$ is called the separated quotient of $X$ and is denoted by $[X]$. Any analytic adic space $X$ is a valuative space and if $X$ is a rigid space, then $X^B$ is the corresponding Berkovich space (hence the choice of notation). We refer to \cite[\S 5]{henkel} for topological aspects of $X^B$, and only note here that $X^B$ is Hausdorff and locally compact when $X$ is taut.

Let us now sketch the uniformization of general nonsingular projective curves over $K$. We strongly recommend the reader to consult Baker's wonderful article \cite{baker} for more explanations and intuition. For proofs and other perspectives, we refer to \cite{berkovich-book} and \cite{fresnel-vanderput}, respectively. Let $C$ be such a curve, with analytification $X:=C^{an}$ and Berkovich space $X^B$. To analyse $X$ and $X^B$, we will use a (fixed) semistable model $\mf{C}$ of $C$ over $K^\circ$ (enlarging $K$ if necessary) whose special fibre we denote by $\ol{C}$. Write
\[
\ol{C} = Z_1 \cup \dots \cup Z_r
\]
as the union of its irreducible components. The first observation that we need is the following.

\begin{theo} Consider the setup as above.
\begin{enumerate}
\item If $Y \to X$ is a covering map, then $Y^B \to X^B$ is a covering map.

\item If $r=1$, i.e. if $C$ has nonsingular reduction, then $X^B$ is contractible. In particular, there are no nontrivial topological covers of $X$.
\end{enumerate}
\end{theo}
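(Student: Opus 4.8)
\emph{Part (1).} Write $q_X\colon X\to X^B$ and $q_Y\colon Y\to Y^B$ for the separation maps, so that $f^B$ is the unique continuous map with $f^B\circ q_Y = q_X\circ f$. I would use two inputs: that $q_X$ is open and surjective (\cite[\S 5]{henkel}), and the elementary spectral fact that open subsets of an adic space are stable under generalization, so that an isomorphism of adic spaces induces a homeomorphism on Berkovich spaces and preserves the rank of points. The plan is to trivialize $f^B$ pointwise. Fix $y^B\in Y^B$, put $x^B=f^B(y^B)$, and let $\xi\in X$ be the rank one point with $q_X(\xi)=x^B$. Since $f$ is a covering map there is an open $V\ni\xi$ with $f^{-1}(V)=\bigsqcup_i U_i$ and $f|_{U_i}\colon U_i\xrightarrow{\sim}V$. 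As $V$ is open and stable under generalization, $q_X(V)=V\cap X^B$, which is open; call it $V^B$. Each $f|_{U_i}$ preserves ranks, hence identifies $U_i\cap Y^B$ with $V\cap X^B$ and descends to a homeomorphism $q_Y(U_i)\xrightarrow{\sim}V^B$.

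It then remains to check $(f^B)^{-1}(V^B)=\bigsqcup_i q_Y(U_i)$. A rank one point $\zeta\in Y^B$ lies in the left side iff $f(\zeta)$ (again rank one, as $f$ is locally an isomorphism) satisfies $q_X(f(\zeta))\in V^B$, i.e. iff $\zeta\in f^{-1}(V)\cap Y^B=\bigsqcup_i(U_i\cap Y^B)=\bigsqcup_i q_Y(U_i)$. The pieces are open, pairwise disjoint, and each maps homeomorphically onto $V^B$, so $f^B$ is a covering map.

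\emph{Part (2).} Assume $r=1$, so the special fibre $\ol{C}$ of $\mf{C}$ is a single smooth irreducible curve; I would prove $X^B$ is contractible by exhibiting a deformation retraction onto a single point. Via $\spe\colon X\to\mf{C}$, let $\eta\in X^B$ be the unique point specializing to the generic point of $\ol{C}$ (unique since $\ol{C}$ is irreducible), the Gauss point. For each closed point $\bar x\in\ol{C}$, smoothness identifies the completion of $\mf{C}$ at $\bar x$ with a one-variable power series ring over $K^\circ$, so the residue tube $\spe^{-1}(\bar x)$ is an open disc, whose Berkovich space is a contractible tree with a single end abutting $\eta$. Away from $\eta$, the space $X^B$ is the disjoint union of these discs, all rooted at $\eta$. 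The plan is to glue the canonical retractions of the discs (fixing $\eta$) into one deformation retraction of $X^B$ onto $\{\eta\}$; equivalently this is the retraction of $X^B$ onto the skeleton of $\mf{C}$, which for $r=1$ is just $\eta$, and I would cite \cite{berkovich-book,baker} for its existence. The genuinely delicate point — the main obstacle — is continuity at $\eta$, where infinitely many residue discs accumulate; this is where one uses tautness (local compactness of $X^B$) and the fact that these discs shrink towards $\eta$ in the topology of $X^B$. Note that the geometric genus of $\ol{C}$ plays no role here: positive genus of the component does not produce loops in the skeleton, only nodes do, which is exactly why good reduction forces contractibility.

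Granting contractibility, $X^B$ is connected and simply connected (and locally nice), so it admits no nontrivial covering spaces. To conclude for $X$ itself, I would invoke part (1): any covering $Y\to X$ yields a covering $Y^B\to X^B$, which must therefore be trivial, and then transfer back along the equivalence between topological covers of $X$ and of $X^B$ recorded in the exercises above to deduce that $Y\to X$ is trivial.
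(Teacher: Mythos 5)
Your proof of part (1) rests on a false claim: that the separation map $q_X\colon X \to X^B$ is open, equivalently that $q_X(V)=V\cap X^B$ is open in $X^B$ whenever $V\sub X$ is open. This fails already for the closed unit disc $X=\Spa K\langle T\rangle$: take $V=\{|T|\leq |c|\}$ for $c\in K$ with $0<|c|<1$. Then $V$ is a rational subset, hence open and quasicompact, so $q_X(V)$ is a quasicompact subset of the Hausdorff space $X^B$, hence closed; since it is a nonempty proper subset of the connected space $X^B$, it cannot also be open. What \cite[\S 5]{henkel} actually provides is the opposite property: $X^B$ is Hausdorff (and locally compact) for taut $X$, from which $q_X$ is a \emph{closed} map on quasicompact pieces, not an open one. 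The underlying phenomenon is that the quotient (Berkovich) topology on the set of rank one points is strictly coarser than the subspace topology, and your $V^B$ is open only in the latter. The error is not repairable by cleverness in choosing $V$: since every open is a union of rational subsets, the evenly covered neighbourhood can always be taken rational, and then your argument asserts the openness of a set that is demonstrably not open --- indeed the step already fails for the identity covering $X\to X$. A correct argument must produce a \emph{saturated} evenly covered open, i.e.\ one of the form $q_X^{-1}(W)$. The natural route: the fibre $q_X^{-1}(x^B)=\ol{\{\xi\}}$ is quasicompact (by tautness, Lemma \ref{some topology}(1)) and irreducible (it is a point closure), and every covering of an irreducible space is trivial; using this one glues sheets over finitely many evenly covered opens covering $\ol{\{\xi\}}$, and then shrinks, via closedness of $q_X$, to a saturated open neighbourhood of the fibre. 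Only for saturated opens are images automatically open in the quotient topology, and only then does the sheet decomposition descend to $X^B$. This saturation step is the real content of part (1) and is entirely absent from your proof.

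On part (2): the paper itself states the theorem without proof, deferring to \cite{berkovich-book,fresnel-vanderput,baker}, so there is no internal proof to compare against; your sketch (residue tubes over closed points of the smooth reduction are open discs with contractible tree-like Berkovich spaces, all rooted at the unique point $\eta$ over the generic point, and $X^B$ retracts onto $\eta$) is the standard argument and correctly isolates continuity at $\eta$ as the delicate point. However, your concluding deduction mis-cites the paper: the exercises in Lecture 4 identify coverings of the adic space $X$ with coverings of the topological space $|X|$; they record no equivalence between coverings of $X$ and coverings of $X^B$. Descending the trivialization of $Y^B\to X^B$ back to $Y\to X$ is not a formality: it requires precisely that $q_X$ is a closed quotient map with connected (indeed irreducible) fibres, so that components of $Y$ match components of $Y^B$ and a degree count over rank one points forces each component to map isomorphically. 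In other words, the descent mechanism you appeal to at the end is the same missing ingredient as in part (1), and it must be proved, not quoted.
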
 

More generally, if $U=\Spa A$ is a smooth connected affinoid one-dimensional rigid space with smooth canonical reduction\footnote{The canonical reduction of an affinoid space $\Spa A$ is $\Spf A^\circ$, assuming that $A^\circ$ is topologically finitely generated over $K^\circ$.}, then there is a unique \emph{multiplicative} norm (up to equivalence) defining the topology on $A$. Since this norm is multiplicative, it defines a rank one point $\zeta\in U$. $\zeta$ is the unique point that maps onto the generic point of the canonical reduction, and $U^B$ contracts onto $\zeta$.

\begin{exer}
Visualize this when $U= \Spa K\langle T \rangle$. Which point is $\zeta$ in this case?
\end{exer}

Let us now return to the situation of a general nonsingular projective curve $C$ and its semistable reduction $\mf{C}$ with reduction $\ol{C} = Z_1 \cup \dots \cup Z_r$. Let $\zeta_i$ be the generic point of $Z_i$. Under the specialization map $sp : X \to \ol{C}$, each $\zeta_i$ lifts \emph{uniquely} to a point in $X$, which lies in $X^B$ and which we will denote by $\zeta_i$ as well. Each node $z\in \ol{C}$ is contained in precisely two irreducible components $Z_i$ and $Z_j$. The preimage $sp^{-1}(z)$ turns out to be isomorphic to an open annulus, and its Berkovich space contains a unique path connecting $\zeta_i$ and $\zeta_j$ which maps down to $z$ under the specialization map. By taking the union of the $\zeta_i$ and all the paths corresponding to an intersection point, we obtain a subspace $\Sigma \sub X^B$ which is a topological multigraph, isomorphic to the dual graph of $\ol{C}$\footnote{We recall that the dual graph of the semistable reduction has nodes corresponding to the irreducible components, and every intersection point between two components defines an edge between the corresponding nodes. It is a (undirected) multigraph.}. Moreover, the whole of $X^B$ deformation retracts onto $\Sigma$. In particular, we obtain the following:

\begin{theo}
With setup as above, let $T \to \Sigma$ be the universal covering space (which is a topological tree). Consider the composition $X \to X^B \to \Sigma$, where the second map is the retraction. Then the fibre product $Y = X \times_\Sigma T$ is a covering map of $X$. Its Berkovich space $Y^B$ is equal to  $X^B \times_\Sigma T$ and deformation retracts onto $T$, and is therefore simply connected.
\end{theo}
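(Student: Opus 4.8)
The plan is to handle the three assertions in turn, using throughout that covering maps are stable under pullback and that, by part (1) of the preceding theorem, the separation functor $(-)^B$ sends covering maps to covering maps.

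For the first assertion, note that the composite $X \to \Sigma$ is continuous, being the quotient map $X \to X^B$ followed by the retraction $r : X^B \to \Sigma$. Since $T \to \Sigma$ is a topological covering map and covering maps are stable under base change, the projection $|X| \times_\Sigma T \to |X|$ is a covering map of topological spaces. By the earlier exercise lifting topological coverings of an adic space to coverings of adic spaces, this is the underlying map of a unique covering map $Y \to X$ of adic spaces; this is the fibre product $Y = X \times_\Sigma T$.

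For the identification $Y^B = X^B \times_\Sigma T$, I would first pin down maximal generalizations in $Y$. Write $\nu : X \to X^B$ for the maximal-generalization map and $p : T \to \Sigma$ for the covering. Since any two points of a generalization chain in the valuative space $X$ have the same maximal generalization, the composite $r \circ \nu : X \to \Sigma$ is constant along generalizations; hence for $(x,t)\in Y$ the $\Sigma$-coordinate does not change as $x$ is generalized, and as $p$ is a local homeomorphism the $T$-coordinate stays equal to $t$. Thus the maximal generalization of $(x,t)$ is $(\nu(x),t)$, giving $Y^B = \{(\xi,t)\ :\ \xi\in X^B,\ t\in T,\ r(\xi)=p(t)\} = X^B \times_\Sigma T$ as sets. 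To match the topologies, observe that the tautological map $\Phi : Y^B \to X^B \times_\Sigma T$ is continuous (its components $Y^B \to X^B$ and $Y^B \to T$ are), and is a fibrewise bijection over $X^B$ between two covering spaces of $X^B$: the source is a covering by part (1) of the previous theorem, the target by base change of $T \to \Sigma$. A continuous fibrewise bijection of covering spaces over a common base is a homeomorphism, so $\Phi$ is the desired identification.

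Finally, I would produce the deformation retraction by lifting the strong deformation retraction $H : X^B \times [0,1] \to X^B$ of $X^B$ onto $\Sigma$ (so $H_0 = \mathrm{id}$, $H_1$ lands in $\Sigma$, and $H_s|_\Sigma = \mathrm{id}$). Writing $\rho : Y^B \to X^B$ for the covering, the homotopy lifting property of $\rho$ (covering maps are fibrations) lifts $H \circ (\rho \times \mathrm{id})$, started at $\mathrm{id}_{Y^B}$, to $\widetilde{H} : Y^B \times [0,1] \to Y^B$ with $\widetilde{H}_0 = \mathrm{id}$. As $H_1$ lands in $\Sigma$, $\widetilde{H}_1$ lands in $\rho^{-1}(\Sigma) = T$; as $H_s|_\Sigma = \mathrm{id}$, uniqueness of lifts forces $\widetilde{H}_s|_T = \mathrm{id}_T$, so $\widetilde{H}$ is a strong deformation retraction of $Y^B$ onto $T$. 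Since $T$ is the universal cover of the graph $\Sigma$ it is a tree, hence contractible, so $Y^B \simeq T$ is contractible and a fortiori simply connected. The step I expect to be most delicate is the identification $Y^B = X^B \times_\Sigma T$: getting the maximal-generalization structure on the fibre product exactly right is what makes the equality hold on the nose, and phrasing the comparison as a morphism of covering spaces of $X^B$ is what lets me avoid a fiddly direct computation with the quotient topology.
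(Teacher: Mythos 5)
Your proof is correct, but there is no proof in the paper to compare it against: the theorem is stated as a summary of the uniformization discussion, with proofs deferred to the cited references (Baker, Berkovich, Fresnel--van der Put). What you have done is assemble a complete argument from precisely the ingredients the text supplies -- the exercise promoting a topological covering of $|X|$ to a covering of adic spaces, part (1) of the preceding theorem, and the retraction of $X^B$ onto the skeleton $\Sigma$ -- and all three steps are sound: base-change stability of coverings for the first claim, identification of maximal generalizations plus a covering-space comparison for the second, and homotopy lifting for the third.

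Three points are worth tightening. (i) The cleanest reason the $T$-coordinate is constant along generalizations is that $T$ is Hausdorff (being a covering space of the graph $\Sigma$), hence has no nontrivial specializations; your appeal to $p$ being a local homeomorphism can be made precise by noting that on a sheet of $Y \to X$ the $T$-coordinate equals $s_i(r(\nu(x)))$ for a local section $s_i$ of $p$, and $r \circ \nu$ is constant along generalizations. (ii) The lemma that a continuous fibrewise bijection between coverings of a common base is a homeomorphism is usually stated under local connectivity hypotheses on the base, which the paper never verifies for $X^B$; fortunately it holds with no hypotheses at all: over a common evenly covered open $V$, split a sheet $U$ of the source into the open pieces $U \cap \Phi^{-1}(W_j)$, on each of which $\Phi = (p_2|_{W_j})^{-1} \circ p_1$ is an open embedding, so $\Phi$ is open. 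You should include this short argument rather than cite a lemma whose usual hypotheses are not available. (iii) Your lifting argument needs the retraction of $X^B$ onto $\Sigma$ to be a \emph{strong} deformation retraction ($H_s|_\Sigma = \mathrm{id}$ for all $s$), whereas the text only asserts a deformation retraction; the strong statement is the true and standard one for skeleta of semistable models, but without it uniqueness of path lifting does not force $\widetilde{H}_s|_T = \mathrm{id}$, so you should flag it explicitly as an input.
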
 

The space $Y$ may reasonably be called the universal cover of $X$. For a way of constructing it without explicitly mentioning $X^B$, see \cite[\S 5.7]{fresnel-vanderput}.

To finish, let us illustrate the above procedure when $X$ is the Tate curve $E_q$ and we have selected the formal model $\mf{E}$ described above. As we have seen, the reduction $\ol{E}$ is a union of two components $Z_1$ and $Z_2$, both isomorphic to $\mb{P}^1$. Let $\zeta_i$ be the generic point of $Z_i$ for $i=1,2$. In terms of the picture above of the specialization map $E_q \to \mf{E}$, let us say that $\zeta_1$ corresponds to the black generic point of $\mf{E}$ and $\zeta_2$ corresponds to the blue generic point. We then see that their (unique) lifts to $E_q$ are the middle black and middle blue points, respectively. The dual graph of $\ol{E}$ is drawn below:

\begin{figure}[htp]
\centering
\includegraphics{./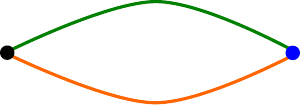}
\caption{The dual graph of $\ol{E}$.}
\end{figure}

The two nodes correspond to the two irreducible components, illustrated by the same colours as in the picture above, and the two edges correspond to the two intersection points. The Berkovich space $E_q^B$ is obtained from the topological space of the adic space $E_q$ by contracting the rank $2$ points to their associated rank $1$ points, illustrated below:

\begin{figure}[htp]
\centering
\includegraphics{./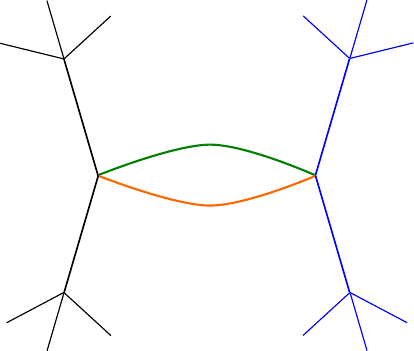}
\caption{The Berkovich space of $E_q$.}
\end{figure}

We see the dual graph here in the picture, as the loop in the middle. We also see that the whole of $E_q^B$ contracts onto the dual graph, as asserted. The universal cover of $E_q^B$ is the infinite ``tree'' $\mb{G}_m^B$, and one sees that the universal cover of $E_q$, as defined above, is $\mb{G}_m = E_q \times_{E_q^B} \mb{G}_m^B$, recovering the starting point of this lecture.

\newpage

\appendix
\counterwithin{theo}{section}

\section{\v{C}ech and sheaf cohomology}\label{appendix on coh}

Here we will give a review of the relationship between \v{C}ech and sheaf cohomology. While reasonably well known, we wish to illustrate that the relationship follows largely from general principles and the homological properties of \v{C}ech cohomology, which are perhaps less well known. For basic homological algebra, we will mostly reference \cite[\S 2]{weibel} and \cite[\S 3.1-3.2, 3.4]{hartshorne}, the latter being a very concise introduction to a lot of the basics needed for sheaf theory in algebraic geometry and similar subjects. Our treatment will use the Grothendieck spectral sequence heavily, so we state it here for completeness.

\begin{theo}[Grothendieck spectral sequence]\label{gss}
Let $\mc{A}$, $\mc{B}$ and $\mc{C}$ be abelian categories such that $\mc{A}$ and $\mc{B}$ have enough injectives. Let $F : \mc{A} \to \mc{B}$ and $G : \mc{B} \to \mc{C}$ be left exact functors and assume that $F$ sends injectives to $G$-acyclic objects. Then there is a (first quadrant, convergent) spectral sequence
\[
E_2^{ij} = R^i G (R^j F(A)) \implies R^{i+j}(G \circ F)(A)
\]
for any $A\in \mc{A}$. 
\end{theo}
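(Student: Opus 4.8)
The plan is to deduce the spectral sequence from the two spectral sequences attached to a suitable first-quadrant double complex, built out of a Cartan--Eilenberg resolution. First I would choose an injective resolution $A \to I^\bullet$ in $\mc{A}$, apply $F$ to obtain a cochain complex $K^\bullet := F(I^\bullet)$ in $\mc{B}$, and then (using that $\mc{B}$ has enough injectives) choose a Cartan--Eilenberg resolution $J^{\bullet,\bullet}$ of $K^\bullet$: a first-quadrant double complex of injectives in $\mc{B}$, augmented by $K^p \to J^{p,0}$, whose columns $J^{p,\bullet}$ are injective resolutions of $K^p$, and in which the horizontal cocycle, coboundary and cohomology objects are all injective (so that, as $q$ varies, they form injective resolutions of $Z^p(K^\bullet)$, $B^p(K^\bullet)$ and $H^p(K^\bullet)$). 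Applying $G$ gives a first-quadrant double complex $G(J^{\bullet,\bullet})$ in $\mc{C}$, with total complex $\mathrm{Tot}(G(J))$; being first-quadrant, both of its spectral sequences converge to $H^\bullet(\mathrm{Tot}(G(J)))$.

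Next I would identify the abutment using the spectral sequence that takes \emph{vertical} (column) cohomology first. Since each column $J^{p,\bullet}$ is an injective resolution of $K^p = F(I^p)$, the vertical cohomology computes $H^q_v(G(J^{p,\bullet})) = R^q G(F(I^p))$. This is the one place the hypothesis on $F$ enters: $I^p$ is injective, so $F(I^p)$ is $G$-acyclic, and hence this vanishes for $q>0$ and equals $G(F(I^p))$ for $q=0$. The resulting page is concentrated in the row $q=0$, so the spectral sequence degenerates and
\[
H^n(\mathrm{Tot}(G(J))) \cong H^n\big((G\circ F)(I^\bullet)\big) = R^n(G\circ F)(A),
\]
the last equality holding because $I^\bullet$ is an injective resolution of $A$ and $G\circ F$ is left exact.

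Then I would compute the $E_2$-page using the spectral sequence that takes \emph{horizontal} (row) cohomology first. The key point is that in a Cartan--Eilenberg resolution the horizontal short exact sequences $0 \to B^p_h \to Z^p_h \to H^p_h \to 0$ and $0 \to Z^p_h \to J^{p,q} \to B^{p+1}_h \to 0$ consist of injectives and hence split, so the additive functor $G$ commutes with horizontal cohomology: $H^p_h(G(J^{\bullet,q})) \cong G\big(H^p_h(J^{\bullet,q})\big)$. As $q$ varies, $H^p_h(J^{\bullet,q})$ is an injective resolution of $H^p(K^\bullet) = R^p F(A)$, so taking vertical cohomology afterwards yields an $E_2$-page $R^i G\big(R^j F(A)\big)$ with abutment in total degree $i+j$. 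Combined with the previous paragraph, this is exactly the asserted spectral sequence $R^i G(R^j F(A)) \implies R^{i+j}(G\circ F)(A)$, and naturality in $A$ follows from the functoriality of each construction.

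I expect the main obstacle to be the careful construction and bookkeeping of the Cartan--Eilenberg resolution -- in particular verifying that its columns are genuine injective resolutions of the $K^p$ and that its horizontal exact sequences split, the latter being precisely what licenses interchanging $G$ with horizontal cohomology. These are standard facts (see e.g. \cite[\S 5.7]{weibel}), so in the write-up I would recall the definition of a Cartan--Eilenberg resolution, cite its existence, and isolate the splitting of the horizontal sequences as the lemma that makes the horizontal-cohomology computation work; everything else is the formal machinery of the two spectral sequences of a first-quadrant double complex.
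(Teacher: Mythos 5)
Your proof is correct, and it is the standard Cartan--Eilenberg double-complex argument. The paper does not prove this theorem at all --- it simply cites \cite[Theorem 5.8.3]{weibel} --- and the argument you give (injective resolution of $A$, Cartan--Eilenberg resolution of $F(I^\bullet)$, degeneration of the vertical-first spectral sequence via $G$-acyclicity of $F(\text{injectives})$, and the splitting of the horizontal sequences to identify the $E_2$-page of the other spectral sequence) is precisely the proof found in that reference, so nothing further is needed.
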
 

A reference is \cite[Theorem 5.8.3]{weibel}. We often use the following much easier special cases, which do not require the machinery of spectral sequences to state or prove.

\begin{coro}\label{easy gss}
Let $\mc{A}$, $\mc{B}$ and $\mc{C}$ be abelian categories such that $\mc{A}$ and $\mc{B}$ have enough injectives. Let $F : \mc{A} \to \mc{B}$ and $G : \mc{B} \to \mc{C}$ be left exact functors and let $A \in \mc{A}$.
\begin{enumerate}
\item If $G$ is exact, then $R^i(G\circ F)(A) \cong G(R^i F(A))$ for all $i$.

\item If $F$ is exact and sends injectives to $G$-acyclic objects, then $R^i(G\circ F)(A) \cong R^i G(F(A))$ for all $i$.
\end{enumerate}
\end{coro}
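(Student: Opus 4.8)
The plan is to prove both statements directly from the definition of right derived functors via injective resolutions, bypassing the spectral sequence of Theorem \ref{gss} entirely. First I would fix $A \in \mc{A}$ and choose an injective resolution $A \to I^\bullet$. Since $\mc{A}$ has enough injectives and $F$, $G$ (and hence the composite $G \circ F$) are left exact, all relevant derived functors are computed by applying the functor to $I^\bullet$ and taking cohomology; in particular
\[
R^i(G\circ F)(A) = H^i\big((G\circ F)(I^\bullet)\big) = H^i\big(G(F(I^\bullet))\big).
\]
Both parts then reduce to reorganizing the single object $H^i(G(F(I^\bullet)))$, each part exploiting its extra hypothesis in a different way.

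For part (1) I would use the elementary fact that an exact functor commutes with the formation of cohomology of a complex: for any complex $C^\bullet$ in $\mc{B}$ one has $H^i(G(C^\bullet)) \cong G(H^i(C^\bullet))$, since an exact $G$ preserves kernels, images and cokernels. Applying this with $C^\bullet = F(I^\bullet)$ and using $H^i(F(I^\bullet)) = R^i F(A)$ gives
\[
R^i(G\circ F)(A) = H^i\big(G(F(I^\bullet))\big) \cong G\big(H^i(F(I^\bullet))\big) = G\big(R^i F(A)\big),
\]
which is the desired isomorphism. All the identifications are natural, so this is natural in $A$; part (1) is thus entirely formal once one knows exact functors commute with cohomology.

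For part (2) the idea is instead to recognize $F(I^\bullet)$ as a $G$-acyclic resolution of $F(A)$. Because $F$ is exact, applying it to the exact sequence $0 \to A \to I^0 \to I^1 \to \cdots$ produces an exact sequence $0 \to F(A) \to F(I^0) \to F(I^1) \to \cdots$, so $F(I^\bullet)$ resolves $F(A)$; and by hypothesis each $F(I^j)$ is $G$-acyclic. I would then invoke the standard principle that the derived functors of a left exact functor may be computed using \emph{any} acyclic resolution to conclude $R^i G(F(A)) \cong H^i\big(G(F(I^\bullet))\big)$, which combined with the displayed computation of $R^i(G\circ F)(A)$ yields the claim.

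The main obstacle is this acyclic-resolution principle, the only non-formal input. I would either cite it from \cite{weibel} or sketch its proof by dimension shifting: setting $Z^j = \ker\big(F(I^j) \to F(I^{j+1})\big)$ and using the short exact sequences $0 \to Z^j \to F(I^j) \to Z^{j+1} \to 0$ together with the $G$-acyclicity of $F(I^j)$, the associated long exact sequences give isomorphisms $R^{k} G(Z^{j+1}) \cong R^{k+1} G(Z^j)$ for $k \geq 1$ and the requisite low-degree identifications, which inductively identify $H^i(G(F(I^\bullet)))$ with $R^i G(F(A))$. This is the step where a little care with indices and the long exact sequence is needed; everything else is formal.
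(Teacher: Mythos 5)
Your proof is correct, but it takes a different route from the paper's. The paper presents this statement as a corollary of the Grothendieck spectral sequence (Theorem \ref{gss}): in part (1), exactness of $G$ kills $R^iG$ for $i\geq 1$ (and makes every object of $\mc{B}$ automatically $G$-acyclic, so the hypotheses of Theorem \ref{gss} hold), collapsing the $E_2$-page to the column $i=0$ and giving $R^j(G\circ F)(A)\cong G(R^jF(A))$; in part (2), exactness of $F$ kills $R^jF$ for $j\geq 1$, collapsing $E_2$ to the row $j=0$ and giving $R^i(G\circ F)(A)\cong R^iG(F(A))$. You instead argue directly from an injective resolution $A\to I^\bullet$: part (1) from the fact that an exact functor commutes with cohomology of complexes, and part (2) by recognizing $F(I^\bullet)$ as a $G$-acyclic resolution of $F(A)$ and invoking (with a correct dimension-shifting sketch) the principle that acyclic resolutions compute derived functors. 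Your argument is exactly the kind of elementary proof the paper alludes to when it says these special cases ``do not require the machinery of spectral sequences to state or prove,'' and it has the virtue of being self-contained and of not needing the convergence formalism; the spectral-sequence derivation is shorter once Theorem \ref{gss} is granted, and it situates both statements as the two degenerate cases of one uniform mechanism. Both are complete proofs, and your identifications are natural in $A$ as required.
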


Let $X$ be a topological space; we let $\PSh(X)$ and $\Sh(X)$ denote the abelian categories of (abelian) presheaves and sheaves, respectively, on $X$. Recall that the inclusion $\iota : \Sh(X) \to \PSh(X)$ is fully faithful and left exact, and sheafification $\mc{F} \to \mc{F}^+$ is exact $\PSh(X) \to \Sh(X)$ and a left adjoint to $\iota$ (which implies that $\iota$ preserves injectives). The derived functors of $\iota$ have the following well known description.

\begin{exer}\label{derived functors of inclusion}
Let $\mc{F}$ be a sheaf on $X$. Show that $R^i \iota(\mc{F})$ is the presheaf $U \mapsto H^i(U,\mc{F})$, and that the sheafification of $R^i \iota(\mc{F})$ is zero for $i\geq 1$. Hint: Consider the compositions 
\[
\Sh(X) \overset{\iota}{\longrightarrow} \PSh(X) \overset{\mc{G} \mapsto \mc{G}(U)}{\longrightarrow} \Ab
\]
and
\[
\Sh(X) \overset{\iota}{\longrightarrow} \PSh(X) \overset{\mc{G} \mapsto \mc{G}^+}{\longrightarrow} \Sh(X).
\]
\end{exer}

\medskip

We now recall the basics of \v{C}ech cohomology. Let $\mf{U} = (U_i)_{i\in I}$ be an open cover of $X$. For distinct indices $i_1,\dots,i_n$ we will write $U_{i_1\dots i_n}$ for the intersection $U_{i_1} \cap \dots \cap U_{i_n}$. Recall that the \v{C}ech complex $\C(\mf{U},\mc{F})$ of a presheaf $\mc{F}$ with respect to $\mf{U}$ is defined as
\[
\prod_i \mc{F}(U_i) \to \prod_{i_1,i_2} \mc{F}(U_{i_1 i_2}) \to \prod_{i_1,i_2,i_3} \mc{F}(U_{i_1 i_2 i_3}) \to \dots \to \prod_{i_1,\dots,i_n} \mc{F}(U_{i_1 \dots i_n}) \to \dots .
\]
The cohomology groups $\cH^i(\mf{U},\mc{F}):= H^i(\C(\mf{U},\mc{F}))$ of the \v{C}ech complex are the \emph{\v{C}ech cohomology groups}. For a fixed open cover $\mf{U}$, the \v{C}ech complex $\mc{F} \mapsto \C(\mf{U},\mc{F})$ defines an \emph{exact} functor
\[
\PSh(X) \to \Ch^+(\Ab),
\]
where $\Ch^+(\Ab)$ denotes the abelian category of cochain complexes of abelian groups concentrated in non-negative degrees.

\begin{exer}
Prove that $\Ch^+(\Ab)$ has enough injectives which are acyclic in degrees $\geq 1$, as follows\footnote{Indeed, you can take it as a further exercise to prove that all injective objects in $\Ch^+(\Ab)$ are acyclic in degrees $\geq 1$.}: Given $M\in \Ab$, define complexes $K^n(M)$ for $n\geq 0$ by letting $K^0(M)$ be $M$ itself viewed as a complex in degree $0$, and, for $n\geq 1$, letting $K^n(M)$ be the complex
\[
\dots \to 0 \to M \overset{id_M}{\to} M \to 0 \to \dots
\]
with the first $M$ in degree $n-1$ and the second in degree $n$. If $C^\bu \in \Ch^+(\Ab)$, choose an injection $C^n \to J^n$ into an injective abelian group $J^n$ for each $n\geq 0$ and construct an injection $C^\bu \to I^\bu := \prod_n K^n(J^n)$. Moreover, show that $I^\bu$ is injective and that $H^i(I^\bu) =0$ for $i\geq 1$. 
\end{exer}

We now briefly discuss the functoriality of $\C(\mf{U},\mc{F})$ with respect to $\mf{U}$, refering to \cite[Tag 09UY]{stacks-project} for details. If $\mf{V} = (V_j)_{j\in J}$ is another cover of $X$, we say that $\mf{V}$ is a \emph{refinement} of $\mf{U}$ if there is a function $f : J \to I$ of index sets such that $V_j \sub U_{f(j)}$ for all $j\in J$. The set of open covers of $X$ form a directed set under refinements. A choice of $f$ as above induces a chain map $\phi_f : \C(\mf{U},\mc{F}) \to \C(\mf{V},\mc{F})$, and if $g : J \to I$ is another function satisfying $V_j \sub U_{g(j)}$ for $j\in J$, then $\phi_f$ and $\phi_g$ are chain homotopic. In particular, we get a well defined map $\phi : \C(\mf{U},\mc{F}) \to \C(\mf{V},\mc{F})$ in the homotopy category $\K^+(\Ab)$ of $\Ch^+(\Ab)$, and hence well defined maps $\cH^i(\mf{U},\mc{F}) \to \cH^i(\mf{V},\mc{F})$ on \v{C}ech cohomology groups. We then define
\[
\cH^i(X,\mc{F}) := \varinjlim_{\mf{U}} \cH^i(\mc{U},\mc{F}).
\]
The homological properties of \v{C}ech cohomology are summarized in the next proposition. For its proof, we recall that the framework for (right) derived functors in homological algebra pre-derived categories is that of (covariant, cohomological) $\delta$-functors \cite[\S 2.1]{weibel}. A $\delta$-functor between abelian categories $\mc{A}$ and $\mc{B}$ is, roughly speaking, a collection of additive functors $(T^n : \mc{A} \to \mc{B})_{n \in \Z_{\geq 0}}$ and satisfying certain axioms \cite[Definition 2.1.1]{weibel}. For a left exact functor $F$ out of an abelian category with enough injectives, the right derived functors $R^iF$ are characterized by $(R^iF)_i$ being a \emph{universal} $\delta$-functor \cite[Corollary III.1.4]{hartshorne}. Moreover, universality is implied by the notion of effaceability \cite[Theorem 1.3A]{hartshorne}.

\begin{prop}\label{cech is a derived functor}
Let $\mf{U}$ be an open cover of $X$. The functors $\cH^0(\mf{U},-)$ and $\cH^0(X,-)$ from $\PSh(X)$ to $\Ab$ are left exact, and their derived functors are given by $R^i \cH^0(\mf{U},-) = \cH^i(\mf{U},-)$ and $R^i \cH^0(X,-) = \cH^i(X,-)$.
\end{prop}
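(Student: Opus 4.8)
The plan is to treat both statements by reducing them to a single acyclicity computation for injective presheaves. I would first record left exactness. By definition $\cH^0(\mf{U},\mc{F})$ is the kernel of $\prod_i \mc{F}(U_i) \to \prod_{i,j}\mc{F}(U_{ij})$, i.e.\ an equalizer and hence a finite limit, so $\cH^0(\mf{U},-)$ is left exact; for $\cH^0(X,-) = \varinjlim_{\mf{U}}\cH^0(\mf{U},-)$ left exactness then follows because filtered colimits are exact in $\Ab$ and therefore commute with the finite limits defining the $\cH^0(\mf{U},-)$. For the derived functors I would factor, for a fixed cover $\mf{U}$, the functor $\cH^0(\mf{U},-) = H^0 \circ \C(\mf{U},-)$, where $\C(\mf{U},-) : \PSh(X)\to\Ch^+(\Ab)$ is exact (as recorded above) and $H^0 : \Ch^+(\Ab)\to\Ab$ is left exact, and then invoke Corollary \ref{easy gss}(2).

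Applying that corollary requires two inputs. The first is that $R^iH^0 = H^i$ on $\Ch^+(\Ab)$: the $H^i$ form a $\delta$-functor (a short exact sequence of complexes yields a long exact sequence in cohomology), and they are effaceable because $\Ch^+(\Ab)$ has enough injectives that are acyclic in degrees $\geq 1$ (the exercise above), so by effaceability \cite[Theorem 1.3A]{hartshorne} the $\delta$-functor $(H^i)$ is universal and agrees with $(R^iH^0)$. The second input is that $\C(\mf{U},-)$ sends injective presheaves to $H^0$-acyclic complexes; granting $R^iH^0 = H^i$, this is precisely the statement that $\cH^i(\mf{U},\mc{I})=0$ for $i\geq 1$ whenever $\mc{I}$ is an injective presheaf. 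With both inputs, Corollary \ref{easy gss}(2) yields $R^i\cH^0(\mf{U},-) = H^i\circ\C(\mf{U},-) = \cH^i(\mf{U},-)$.

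The crux, and the step I expect to require the most care, is the acyclicity of the \v{C}ech complex of an injective presheaf. I would prove it by rewriting $\C^\bu(\mf{U},-)$ as a $\Hom$-complex. For an open $V\sub X$ let $\Z_V\in\PSh(X)$ be the presheaf with $\Z_V(W)=\Z$ if $W\sub V$ and $0$ otherwise; Yoneda gives $\Hom_{\PSh}(\Z_V,\mc{F})\cong\mc{F}(V)$. Setting $L_n = \bigoplus_{(i_0,\dots,i_n)} \Z_{U_{i_0\cdots i_n}}$ with the simplicial boundary maps, one obtains a natural identification $\C^\bu(\mf{U},\mc{F}) = \Hom_{\PSh}(L_\bu,\mc{F})$. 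If $\mc{I}$ is injective then $\Hom_{\PSh}(-,\mc{I})$ is exact and contravariant, hence commutes with (co)homology, so $\cH^i(\mf{U},\mc{I}) \cong \Hom_{\PSh}(H_i(L_\bu),\mc{I})$. It then suffices to check $H_i(L_\bu)=0$ for $i\geq 1$, which I would verify sectionwise: evaluating at an open $W$, the complex $L_\bu(W)$ is the ordered chain complex of a simplex on the index set $I_W = \{i : W\sub U_i\}$ (and is zero if $I_W=\emptyset$), which is acyclic in positive degrees in every case.

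Finally, for the cover-independent statement I would compute $R^i\cH^0(X,-)$ from an injective resolution $\mc{F}\to\mc{I}^\bu$ in $\PSh(X)$ (presheaf categories have enough injectives). Using the fixed-cover result together with exactness of filtered colimits,
\[
R^i\cH^0(X,\mc{F}) = H^i\Big(\varinjlim_{\mf{U}}\cH^0(\mf{U},\mc{I}^\bu)\Big) = \varinjlim_{\mf{U}} H^i\big(\cH^0(\mf{U},\mc{I}^\bu)\big) = \varinjlim_{\mf{U}}\cH^i(\mf{U},\mc{F}) = \cH^i(X,\mc{F}),
\]
which is the desired identification. The only subtlety is the interchange of $H^i$ with the filtered colimit over refinements, again justified by exactness of filtered colimits in $\Ab$.
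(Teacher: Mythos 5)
Your proposal is correct, and it is worth comparing to the paper's proof, which it follows in part and diverges from in part. For the fixed-cover statement your route is the same as the paper's: factor $\cH^0(\mf{U},-) = H^0 \circ \C(\mf{U},-)$ through $\Ch^+(\Ab)$ and apply Corollary \ref{easy gss}(2). The difference is that you prove the two inputs that the paper outsources: the identity $R^iH^0 = H^i$ (which the paper cites to \cite[Exercise 2.4.4]{weibel}, and which you correctly deduce from effaceability using the exercise on injectives in $\Ch^+(\Ab)$), and the vanishing $\cH^i(\mf{U},\mc{I})=0$ for injective presheaves $\mc{I}$ (which the paper cites to \cite[Tag 01EN]{stacks-project}); your argument for the latter, writing $\C^\bu(\mf{U},-) = \Hom_{\PSh}(L_\bu,-)$ and checking sectionwise contractibility of $L_\bu$, is precisely the Stacks Project's argument, so here you have filled in cited details rather than found a new path. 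For the cover-independent statement your route is genuinely different: the paper checks that $(\cH^i(X,-))_i$ is a $\delta$-functor and proves universality by effaceability ($\cH^i(X,\mc{I}) = \varinjlim_{\mf{U}}\cH^i(\mf{U},\mc{I}) = 0$ for injective $\mc{I}$), whereas you compute $R^i\cH^0(X,-)$ directly from an injective resolution and commute $H^i$ past the filtered colimit over refinements. Both are valid and both ultimately rest on exactness of filtered colimits in $\Ab$ (the paper needs it to verify the $\delta$-functor axioms for the colimit); your version is more explicit and avoids invoking the universality formalism a second time, while the paper's is slightly shorter given that the $\delta$-functor machinery has already been set up. One small point of care in your interchange step: the directed system $\mf{U} \mapsto \cH^0(\mf{U},\mc{I}^\bu)$ is only well defined because refinement maps are independent of choices up to chain homotopy, a fact established just before the proposition, and which your argument silently relies on.
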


\begin{proof}
We first prove the statements about $\cH^i(\mf{U},-)$. We may write $\cH^0(\mf{U},-)$ as the composition
\[
\PSh(X) \overset{\C(\mf{U},-)}{\longrightarrow} \Ch^+(\Ab) \overset{H^0}{\longrightarrow} \Ab.
\]
The \v{C}ech complex functor is exact and $H^0 : \Ch^+(\Ab) \to \Ab$ is left exact, with derived functors $R^iH^0 = H^i$ \cite[Exercise 2.4.4]{weibel}, so by Corollary \ref{easy gss} it suffices to show that $\cH^i(\mf{U},\mc{I}) = 0$ for all $i \geq 1$ and all injective presheaves $\mc{I}$. For this, we refer to the proof of \cite[Tag 01EN]{stacks-project}.

\medskip

It remains to prove the statements about $\cH^i(X,-)$, for which it suffices to prove that $(\cH^i(X,-))_i$ is a universal $\delta$-functor. Since the $(\cH^i(\mf{U},-))_i$ are $\delta$-functors, one easily checks that $(\cH^i(X,-))_i$ is a $\delta$-functor. Moreover, if $\mc{F}$ is any presheaf on $X$, then we can choose an injection $\mc{F} \to \mc{I}$ into an injective presheaf, and by above we see that $\cH^i(X,\mc{I}) = \varinjlim \cH^i(\mf{U},\mc{I}) = 0$ for $i\geq 1$. Therefore $(\cH^i(X,-))_i$ is effaceable, and hence universal as desired.
\end{proof}

As a consequence, we obtain the \v{C}ech-to-derived functor spectral sequence(s).

\begin{theo}\label{cech to derived}
For each open cover $\mf{U}$ and each sheaf $\mc{F}$ on $X$ we have a spectral sequence
\[
E_2^{ij} = \cH^i(\mf{U},R^j\iota(\mc{F})) \implies H^{i+j}(X,\mc{F}).
\]
Moreover, for each sheaf $\mc{F}$ we have a spectral sequence
\[
E_2^{ij} = \cH^i(X,R^j\iota(\mc{F})) \implies H^{i+j}(X,\mc{F}).
\]
\end{theo}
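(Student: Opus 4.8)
The plan is to obtain both spectral sequences as instances of the Grothendieck spectral sequence (Theorem \ref{gss}), applied to the inclusion $\iota : \Sh(X) \to \PSh(X)$ followed by one of the two \v{C}ech functors. Concretely, I would take $F = \iota$ and $G = \cH^0(\mf{U},-)$ for the first sequence, and $F = \iota$ with $G = \cH^0(X,-)$ for the second, both $G$ regarded as functors $\PSh(X) \to \Ab$.

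First I would check the hypotheses of Theorem \ref{gss}. Both $\Sh(X)$ and $\PSh(X)$ have enough injectives. The functor $\iota$ is left exact, and $G$ is left exact by Proposition \ref{cech is a derived functor}; that same proposition identifies the derived functors as $R^i G = \cH^i(\mf{U},-)$ (resp. $\cH^i(X,-)$), which is what we will need to recognize the $E_2$-page. The one genuine hypothesis to verify is that $F$ sends injectives to $G$-acyclic objects. This splits into two steps: since sheafification is an exact left adjoint to $\iota$, the functor $\iota$ preserves injectives, so $\iota(\mc{I})$ is an injective presheaf whenever $\mc{I}$ is an injective sheaf; and injective presheaves are acyclic for both $\cH^0(\mf{U},-)$ and $\cH^0(X,-)$, which is precisely what was established in the proof of Proposition \ref{cech is a derived functor} (via the reference to the Stacks project for a fixed cover, and via effaceability after passing to the colimit over covers). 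This verification is the only mild obstacle, and it has effectively been done for us already.

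Next I would identify the composite $G \circ F$. For a sheaf $\mc{F}$, the sheaf axiom gives $\cH^0(\mf{U},\iota(\mc{F})) = \mc{F}(X)$ for every cover $\mf{U}$, and passing to the colimit over covers likewise yields $\cH^0(X,\iota(\mc{F})) = \mc{F}(X)$. Hence in both cases $G \circ F = \Gamma(X,-)$ is the global sections functor on $\Sh(X)$, whose right derived functors are by definition the sheaf cohomology $H^i(X,-)$.

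Finally, I would simply read off the output of Theorem \ref{gss}. With the identifications above it gives
\[
E_2^{ij} = R^i G\bigl(R^j\iota(\mc{F})\bigr) = \cH^i\bigl(\mf{U},R^j\iota(\mc{F})\bigr) \implies R^{i+j}\Gamma(X,\mc{F}) = H^{i+j}(X,\mc{F}),
\]
which is the first spectral sequence, and the identical argument with $G = \cH^0(X,-)$ produces the second. The whole proof is thus a formal consequence of Proposition \ref{cech is a derived functor} together with the fact that $\iota$ preserves injectives; no further computation is required.
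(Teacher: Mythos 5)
Your proposal is correct and follows essentially the same route as the paper: both obtain the two spectral sequences as Grothendieck spectral sequences for the compositions $\Sh(X) \overset{\iota}{\to} \PSh(X) \to \Ab$ with $G = \cH^0(\mf{U},-)$ or $\cH^0(X,-)$, using Proposition \ref{cech is a derived functor} to identify $R^iG$, the fact that $\iota$ preserves injectives to verify the acyclicity hypothesis, and the sheaf axiom to identify $G \circ \iota$ with global sections. The paper merely compresses the hypothesis-checking that you spell out in detail.
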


\begin{proof}
Note that $\cH^0(\mc{U},\mc{F}) = \cH^0(X,\mc{F}) = H^0(X,\mc{F})$ for any \emph{sheaf} $\mc{F}$, so by Proposition \ref{cech is a derived functor} the spectral sequences in the theorem are the Grothendieck spectral sequences for the compositions
\[
\Sh(X) \overset{\iota}{\longrightarrow} \PSh(X) \overset{\cH^0(\mf{U},-)}{\longrightarrow} \Ab
\]
and
\[
\Sh(X) \overset{\iota}{\longrightarrow} \PSh(X) \overset{\cH^0(X,-)}{\longrightarrow} \Ab
\]
(recall that $\iota$ preserves injectives).
\end{proof}

Let us now list the key consequences that we will need. We start with working out what happens in some low degrees.

\begin{coro}\label{low degrees}
Let $\mc{F}$ be a sheaf on $X$. Then we have $\cH^0(X,R^j\iota(\mc{F}))=0$ for $j\geq 1$. As a consequence, the natural map $\cH^1(X,\mc{F}) \to H^1(X,\mc{F})$ is an isomorphism.
\end{coro}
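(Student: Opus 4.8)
The plan is to run the second Čech-to-derived spectral sequence of Theorem \ref{cech to derived}, namely $E_2^{ij} = \cH^i(X, R^j\iota(\mc F)) \implies H^{i+j}(X,\mc F)$, fed by the vanishing input from Exercise \ref{derived functors of inclusion}, which says that the presheaf $R^j\iota(\mc F)$ has zero sheafification for every $j \geq 1$. Both assertions of the corollary will follow once I establish the purely formal fact that $\cH^0(X, \mc G) = 0$ for any presheaf $\mc G$ whose sheafification vanishes; I expect this to be the only step with genuine content.

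First I would prove that formal fact. Recall $\cH^0(X,\mc G) = \varinjlim_{\mf U} \cH^0(\mf U, \mc G)$, and that a class is represented by a cover $\mf U = (U_i)_{i\in I}$ of $X$ together with a matching family $(s_i)$, with $s_i \in \mc G(U_i)$ and $s_i|_{U_{ij}} = s_j|_{U_{ij}}$. Vanishing of the sheafification of $\mc G$ is equivalent to vanishing of all stalks $\mc G_x$, so every germ of every $s_i$ is zero; by definition of the stalk as a filtered colimit, each point $x \in U_i$ therefore has an open neighbourhood $W \sub U_i$ with $s_i|_W = 0$. Collecting such $W$ over all $i$ yields a cover $\mf W$ of $X$ refining $\mf U$ on which the chosen representative restricts to the zero family, since the degree-$0$ refinement chain map sends $(s_i)_i$ to $(s_{f(\lambda)}|_{W_\lambda})_\lambda$. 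Hence the class dies already in $\cH^0(\mf W, \mc G)$, and so in the colimit. Applying this with $\mc G = R^j\iota(\mc F)$ for $j \geq 1$ gives the first assertion $\cH^0(X, R^j\iota(\mc F)) = 0$.

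To conclude, I would read off the low-degree terms of the spectral sequence via the five-term exact sequence
\[
0 \to E_2^{1,0} \to H^1(X,\mc F) \to E_2^{0,1} \to E_2^{2,0}.
\]
Here $E_2^{0,1} = \cH^0(X, R^1\iota(\mc F)) = 0$ by the first part, while $E_2^{1,0} = \cH^1(X, R^0\iota(\mc F)) = \cH^1(X,\mc F)$, since $R^0\iota = \iota$ is just the inclusion and the Čech cohomology of the presheaf underlying $\mc F$ is by definition $\cH^1(X,\mc F)$. Exactness then forces the edge map $\cH^1(X,\mc F) \to H^1(X,\mc F)$ to be an isomorphism. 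That this is the \emph{natural} comparison map is automatic: by Proposition \ref{cech is a derived functor} the spectral sequence is exactly the Grothendieck spectral sequence of the composition $\cH^0(X,-) \circ \iota$, and its bidegree-$(1,0)$ edge homomorphism $R^1\cH^0(X,-)(\iota \mc F) \to R^1(\cH^0(X,-)\circ \iota)(\mc F)$ is precisely this map.

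The main obstacle is the formal vanishing in the second paragraph, and even there the only subtlety is keeping track of the refinement chain map in degree $0$, so that local triviality of each individual $s_i$ on the refined cover genuinely kills the whole representative; once that bookkeeping is in place, the spectral-sequence argument is entirely mechanical.
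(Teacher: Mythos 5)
Your proposal is correct and follows essentially the same route as the paper: the paper's proof likewise deduces $\cH^0(X,R^j\iota(\mc{F}))=0$ from the vanishing of the sheafification of $R^j\iota(\mc{F})$ (Exercise \ref{derived functors of inclusion}) and then reads off the isomorphism $\cH^1(X,\mc{F}) \cong H^1(X,\mc{F})$ from the five-term exact sequence of the \v{C}ech-to-derived functor spectral sequence. The only difference is that you spell out the stalk/refinement argument behind the formal fact that a presheaf with zero sheafification has vanishing \v{C}ech $H^0$, which the paper states without proof; your bookkeeping there is accurate.
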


\begin{proof}
That $\cH^0(X,R^j\iota(\mc{F}))=0$ for $j\geq 1$ follows from the fact that the sheafification of $R^j\iota(\mc{F})$ is zero for $j\geq 1$ (Exercise \ref{derived functors of inclusion}). That $\cH^1(X,\mc{F}) \to H^1(X,\mc{F})$ is an isomorphism then follows from the \v{C}ech-to-derived functor spectral sequence (look at its five term exact sequence\footnote{The five term exact sequence is stated in \cite[Theorem 5.8.3]{weibel}}).
\end{proof}

Next, we come to Leray's Theorem, which gives a criterion for when \v{C}ech cohomology for a specific cover computes sheaf cohomology.

\begin{coro}[Leray's Theorem]\label{Leray}
Let $\mf{U}=(U_i)_{i\in I}$ be an open cover of $X$ and let $\mc{F}$ be a sheaf on $X$. Assume that $H^j(U_{i_1\dots i_n},\mc{F})=0$ for all distinct $i_1,\dots,i_n \in I$ and $j\geq 1$. Then the natural map $\cH^j(\mf{U},\mc{F}) \to H^j(X,\mc{F})$ is an isomorphism for all $j\geq 0$.
\end{coro}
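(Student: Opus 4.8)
The plan is to feed the hypothesis directly into the first Čech-to-derived functor spectral sequence of Theorem \ref{cech to derived},
\[
E_2^{ij} = \cH^i(\mf{U},R^j\iota(\mc{F})) \implies H^{i+j}(X,\mc{F}),
\]
and to show that it degenerates because every row with $j\geq 1$ vanishes identically. The essential input is the description of the derived functors of $\iota$ from Exercise \ref{derived functors of inclusion}: $R^j\iota(\mc{F})$ is the presheaf $U \mapsto H^j(U,\mc{F})$.

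First I would unwind the complex $\C(\mf{U},R^j\iota(\mc{F}))$ computing $E_2^{ij}$. Each of its terms is a product of groups of the form $R^j\iota(\mc{F})(U_{i_1\dots i_n}) = H^j(U_{i_1\dots i_n},\mc{F})$, indexed by tuples of members of the cover. By the standing assumption of the corollary, $H^j(U_{i_1\dots i_n},\mc{F})=0$ for all such finite intersections and all $j\geq 1$ (for a tuple with a repeated index the intersection is simply an intersection of fewer distinct members, so it is still covered by the hypothesis). Hence, for every fixed $j\geq 1$, the \emph{entire} complex $\C(\mf{U},R^j\iota(\mc{F}))$ is the zero complex, and therefore $E_2^{ij} = \cH^i(\mf{U},R^j\iota(\mc{F})) = 0$ for all $i$ and all $j\geq 1$.

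With the $E_2$ page concentrated in the single row $j=0$, the spectral sequence degenerates at $E_2$: every differential either originates in or lands in a row with $j\neq 0$, which is zero, so all differentials vanish and the abutment has a one-step filtration. This yields an isomorphism
\[
H^i(X,\mc{F}) \cong E_2^{i0} = \cH^i(\mf{U},R^0\iota(\mc{F})).
\]
Since $\iota$ is left exact we have $R^0\iota = \iota$, so $R^0\iota(\mc{F})$ is just $\mc{F}$ viewed as a presheaf and $\cH^i(\mf{U},R^0\iota(\mc{F})) = \cH^i(\mf{U},\mc{F})$, giving the desired conclusion $\cH^i(\mf{U},\mc{F}) \cong H^i(X,\mc{F})$ for all $i\geq 0$.

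The one point I expect to require genuine care—and the only real content beyond bookkeeping—is checking that the isomorphism produced by this degeneration is precisely the \emph{natural} map $\cH^j(\mf{U},\mc{F}) \to H^j(X,\mc{F})$, rather than some a priori abstract identification. This is the standard behaviour of the bottom-row edge homomorphism of a Grothendieck spectral sequence: for the composition $\Sh(X) \overset{\iota}{\to} \PSh(X) \overset{\cH^0(\mf{U},-)}{\to} \Ab$, the edge map in row $j=0$ is exactly the comparison morphism induced by $\cH^0(\mf{U},-)$ applied to an injective resolution, which unwinds to the usual natural transformation. I would either invoke the naturality of edge homomorphisms directly or compare term by term against the definition of the comparison map; this is where the (minimal) remaining verification lies.
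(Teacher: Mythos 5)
Your proposal is correct and follows essentially the same route as the paper's proof: both feed the hypothesis into the \v{C}ech-to-derived functor spectral sequence of Theorem \ref{cech to derived}, observe via Exercise \ref{derived functors of inclusion} that every term of $\C^\bu(\mf{U},R^j\iota(\mc{F}))$ is a product of groups $H^j(U_{i_1\dots i_n},\mc{F})$ which vanish for $j\geq 1$, and conclude that the spectral sequence collapses onto the row $j=0$. Your additional remarks on repeated indices and on identifying the degeneration isomorphism with the natural edge map are sound points of care that the paper leaves implicit.
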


\begin{proof}
By Theorem \ref{cech to derived} it will suffice to show that $\cH^j(\mf{U},R^k\iota(\mc{F})) = 0$ whenever $k \geq 1$. But the Cech complex $\C^\bu(\mf{U},R^k\iota(\mc{F}))$ is
\[
\prod_i H^k(U_i,\mc{F}) \to \prod_{i_1,i_2} H^k(U_{i_1 i_2},\mc{F}) \to \dots \to \prod_{i_1,\dots,i_n} H^k(U_{i_1 \dots i_n},\mc{F}) \to \dots
\]
by the description of $R^k\iota(\mc{F})$ from Exercise \ref{derived functors of inclusion}, so all terms of $\C^\bu(\mf{U},R^k\iota(\mc{F}))$ vanish when $k\geq 1$ by assumption. This finishes the proof.
\end{proof}

The disadvantage of Leray's Theorem is that the condition is phrased in terms of sheaf cohomology. Since the development of cohomology of adic spaces starts with us only being able to compute \v{C}ech cohomology, we instead need the following result of Cartan to get us off the ground. Note that to define the cover-independent \v{C}ech cohomology groups $\cH^i(X,-)$ we do not need all open covers $\mf{U}$, any cofinal set of open covers will do (since any direct limit can be computed using only a cofinal subset of the index set). In particular, if $\mf{B}$ is a basis of open sets of $X$ which is closed under intersection, then it suffices to take covers consisting of elements in $\mf{B}$.

\begin{coro}[Cartan]\label{Cartan}
Let $\mc{F}$ be a sheaf on $X$, and $\mf{B}$ be a basis of open sets of $X$ which is closed under intersection. Assume that $\cH^i(U,\mc{F}) =0$ for any open $U \in \mf{B}$ and any $i\geq 1$. Then the natural map $\cH^i(\mf{U},\mc{F}) \to H^i(X,\mc{F})$ is an isomorphism for any open cover $\mf{U} \sub \mf{B}$ and any $i \geq 0$. 
\end{coro}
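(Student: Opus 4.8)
The plan is to reduce everything to a vanishing statement for \emph{sheaf} cohomology on the basis $\mf{B}$ and then invoke Leray's Theorem (Corollary \ref{Leray}). Concretely, I would first prove that $H^i(U,\mc{F})=0$ for every $U\in\mf{B}$ and every $i\geq 1$; once this is known, the fact that $\mf{B}$ is closed under intersection guarantees that for any cover $\mf{U}\sub\mf{B}$ all finite intersections of its members again lie in $\mf{B}$, so the hypotheses of Corollary \ref{Leray} are met and the map $\cH^j(\mf{U},\mc{F})\to H^j(X,\mc{F})$ is an isomorphism for all $j\geq 0$. Thus the entire content of the statement is the passage from the assumed vanishing of \v{C}ech cohomology $\cH^i(U,\mc{F})$ to the vanishing of genuine sheaf cohomology $H^i(U,\mc{F})$ on basis elements.

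I would establish this vanishing by induction on $i$, using the cover-independent \v{C}ech-to-derived functor spectral sequence of Theorem \ref{cech to derived} applied to the open subspace $U$ and the sheaf $\mc{F}|_U$, namely
\[
E_2^{pq} = \cH^p(U,R^q\iota(\mc{F})) \implies H^{p+q}(U,\mc{F}).
\]
Fix $i\geq 1$ and assume $H^q(V,\mc{F})=0$ for all $V\in\mf{B}$ and all $1\leq q<i$. I claim that every term on the antidiagonal $p+q=i$ of the $E_2$-page vanishes. For $q=0$ the term is $\cH^i(U,\mc{F})$, which is zero by the hypothesis of the corollary (as $i\geq 1$). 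For $1\leq q<i$ the presheaf $R^q\iota(\mc{F})$ is, by Exercise \ref{derived functors of inclusion}, the presheaf $V\mapsto H^q(V,\mc{F})$, which vanishes on all of $\mf{B}$ by the inductive hypothesis. Finally, for the corner term $q=i$, $p=0$, Corollary \ref{low degrees} (applied to $U$) gives $\cH^0(U,R^i\iota(\mc{F}))=0$. Since $E_\infty$ is a subquotient of $E_2$, the whole antidiagonal of $E_\infty$ then vanishes and hence $H^i(U,\mc{F})=0$, completing the induction (the case $i=1$ being exactly the induction step with an empty hypothesis).

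The one point that requires care — and which I expect to be the main technical obstacle — is the middle range $1\leq q<i$: one must check that the cover-\emph{independent} \v{C}ech group $\cH^p(U,R^q\iota(\mc{F}))$ really vanishes, not merely the sheafification of $R^q\iota(\mc{F})$. This is where closedness of $\mf{B}$ under intersection is essential. It ensures that a cofinal family of covers of $U$ (those drawn from $\mf{B}$) have all of their finite intersections in $\mf{B}$, so that every term of each such \v{C}ech complex is a value of the presheaf $R^q\iota(\mc{F})$ on a set of $\mf{B}$, hence zero by the inductive hypothesis; passing to the colimit over this cofinal family then kills $\cH^p(U,R^q\iota(\mc{F}))$ for all $p$. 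With that in hand, the spectral sequence bookkeeping on the antidiagonal and the final appeal to Leray's Theorem are routine.
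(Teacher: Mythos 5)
Your proposal is correct and follows essentially the same route as the paper's proof: reduce Cartan to Leray via closedness of $\mf{B}$ under intersection, then prove $H^i(U,\mc{F})=0$ for $U\in\mf{B}$ by induction on $i$, killing the antidiagonal $p+q=i$ of the \v{C}ech-to-derived functor spectral sequence using the hypothesis at $q=0$, the inductive hypothesis together with cofinality of covers drawn from $\mf{B}$ in the middle range, and Corollary \ref{low degrees} at the corner $(0,i)$. The technical point you flag — that the cover-independent groups $\cH^p(U,R^q\iota(\mc{F}))$ vanish because each \v{C}ech complex over a cover from $\mf{B}$ is termwise zero — is exactly the argument the paper makes.
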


\begin{proof}
Let $\mf{U}$ be an open cover with $\mf{U} \sub \mf{B}$. We will prove that $\cH^i(\mf{U},\mc{F}) \to H^i(X,\mc{F})$ is an isomorphism by checking that the criterion in Leray's Theorem holds. That means that we need to check $H^j(U, \mc{F})=0$ for any finite intersection $U$ of elements in $\mf{U}$ and any $j\geq 1$. In other words, Cartan's Theorem will follow if we can prove that $H^j(U, \mc{F})=0$ for any $U\in \mf{B}$ and any $j\geq 1$.

\medskip

We will prove this by induction on $j$. Recall that our assumption is that $\cH^j(U,\mc{F}) =0$ for $j\geq 1$ and any $U\in \mf{B}$. By Corollary \ref{low degrees} we therefore know that $H^1(U, \mc{F})=0$ for any $U \in \mf{B}$, so assume that $j\geq 2$ and that $H^k(U,\mc{F}) = 0$ for $1 \leq k < j$ and any $U\in \mf{B}$. Now pick an arbitrary $W \in \mf{B}$; we want to show that $H^j(W, \mc{F})=0$. Consider the \v{C}ech-to-derived functor spectral sequence
\[
E_2^{pq} = \cH^p(W,R^q\iota(\mc{F})) \implies H^{p+q}(W,\mc{F}),
\]
and consider the terms on the $E_2$-page with $p+q = j$. If $1 \leq q \leq j-1$, and $\mf{W} \sub \mf{B}$ is an open cover of $W$, then $\C^\bu(\mf{W},R^q\iota(\mc{F}))=0$ by the induction hyothesis and hence $\cH^p(W,R^q\iota(\mc{F})) = 0$. If $q=j$, then $p=0$ and hence $\cH^p(W,R^q\iota(\mc{F})) = 0$ by Corollary \ref{low degrees}. Finally, if $q=0$ then $p=j$ and $E_2^{j,0} = \cH^j(W,\mc{F}) = 0$ by assumption. Thus $E_2^{pq} = 0$ whenever $p+q = j$ and hence $H^j(W,\mc{F})=0$ as desired, finishing the induction.
\end{proof}

\section{Some topology}

Here we record a few scattered facts from the the theory of (locally) spectral spaces that we will need. We refer to \cite[Tag 08YF]{stacks-project} for basic definitions for spectral spaces.

\begin{defi}
Let $f : X \to Y$ be a map of locally spectral spaces. We say that $f$ is quasicompact if, for every quasicompact open $V \sub Y$, $f^{-1}(V)$ is quasicompact.
\end{defi}

Note that quasicompactness is local on the target. Moreover, recall that quasicompact maps between spectral spaces are called spectral. Not every map between spectral spaces is spectral, as the example $\Spa(\Zp \lb X \rb, \Zp \lb X \rb ) \to \Spa(\Zp,\Zp)$ shows (the preimage of the quasicompact open subset $\Spa(\Qp,\Zp) \sub \Spa(\Zp,\Zp)$ is not quasicompact). However, if we restrict to \emph{analytic} adic spaces, the situation is better.

\begin{exer} Let $f : X \to Y$ be a morphism of analytic adic spaces. Show the following:
\begin{enumerate}
\item If $X$ and $Y$ are affinoid with $\oo_X(X)$ and $\oo_Y(Y)$ Tate and $V \sub Y$ is a rational subset, then $f^{-1}(V)$ is rational subset. In particular, $f$ is quasicompact.

\smallskip

\item Assume that there exists a cover $(V_i)_{i\in I}$ of $Y$ by affinoid Tate opens $V_i$ such that $f^{-1}(V_i)$ is quasicompact for all $i$. Then $f$ is quasicompact.
\end{enumerate}
\end{exer}

We will use the following observation.

\begin{lemm}\label{quasicompactness of maps}
Let $f : X \to Y$ be a morphism of analytic adic spaces. If $X$ is quasicompact and $Y$ is quasiseparated, then $f$ is quasicompact.
\end{lemm}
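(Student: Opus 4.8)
The plan is to reduce everything to a morphism between affinoid Tate spaces whose image lands in a single affinoid, where part (1) of the preceding exercise applies, and to use quasiseparatedness of $Y$ at exactly one point. By part (2) of that exercise, to prove that $f$ is quasicompact it suffices to cover $Y$ by affinoid Tate opens and show that the preimage of each is quasicompact. Since $Y$ is analytic, its affinoid Tate opens form a cover (indeed a basis), so I would fix one such open $V$ and aim to show that $f^{-1}(V)$ is quasicompact.

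First I would localize the source. The image $f(X)$ is quasicompact, being the continuous image of the quasicompact space $X$, so it is contained in a finite union $W_1 \cup \dots \cup W_m$ of affinoid Tate opens of $Y$. The sets $f^{-1}(W_j)$ then form an open cover of $X$; refining it by affinoid Tate opens (which form a basis of $X$, again using that $X$ is analytic) and invoking quasicompactness of $X$, I can extract finitely many affinoid Tate opens $U_1,\dots,U_N \sub X$ covering $X$ with $f(U_\beta) \sub W_{j(\beta)}$ for suitable indices $j(\beta)$. As $\{U_\beta\}$ covers $X$, we have $f^{-1}(V) = \bigcup_\beta (f|_{U_\beta})^{-1}(V)$, a finite union, so it is enough to show each $(f|_{U_\beta})^{-1}(V)$ is quasicompact. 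This reduces the problem to a single morphism $g = f|_{U_\beta} : U_\beta \to W_{j(\beta)}$ of affinoid Tate spaces.

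Now comes the key step. Since $g$ lands in $W_{j(\beta)}$ we have $g^{-1}(V) = g^{-1}(V \cap W_{j(\beta)})$. Here I would invoke quasiseparatedness of $Y$: both $V$ and $W_{j(\beta)}$ are quasicompact opens, so their intersection $V \cap W_{j(\beta)}$ is again quasicompact, and being a quasicompact open of the affinoid $W_{j(\beta)}$ it is a finite union $\bigcup_l R_l$ of rational subsets. Pulling back and applying part (1) of the preceding exercise — valid precisely because $g$ is a morphism of affinoid Tate spaces — each $g^{-1}(R_l)$ is a rational subset of $U_\beta$, hence quasicompact, so their finite union $g^{-1}(V \cap W_{j(\beta)})$ is quasicompact. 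Reassembling the finite unions over $\beta$ shows $f^{-1}(V)$ is quasicompact, and part (2) of the exercise then concludes that $f$ is quasicompact.

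I expect the genuinely essential ingredient to be the interaction of the two hypotheses rather than any single hard computation. Quasiseparatedness of $Y$ is exactly what makes $V \cap W_{j(\beta)}$ quasicompact (and hence a finite union of rational subsets), while the analyticity/Tate hypothesis is what makes part (1) available, ensuring that preimages of rational subsets remain quasicompact. Neither hypothesis is decorative: the example $\Spa(\Zp\lb X\rb,\Zp\lb X\rb) \to \Spa(\Zp,\Zp)$ recalled in the text shows that dropping analyticity makes the statement fail even with $X$ quasicompact and $Y$ quasiseparated, so any correct argument must route through something like part (1). The only remaining points requiring care are the standing facts that analytic affinoid adic spaces admit a basis of affinoid Tate opens and that a quasicompact open of an affinoid is a finite union of rational subsets.
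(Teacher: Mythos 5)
Your proof is correct and follows essentially the same route as the paper: cover $X$ by finitely many Tate affinoids each mapping into a Tate affinoid of $Y$, use quasiseparatedness to see that the relevant intersection of quasicompact opens is quasicompact, and invoke part (1) of the preceding exercise to handle the resulting morphism of affinoid Tate spaces. The only differences are cosmetic --- you route the final reduction through part (2) of the exercise and unpack the quasicompactness of the affinoid restriction into an explicit finite union of rational subsets, whereas the paper verifies the definition directly and cites the exercise's ``in particular, $f$ is quasicompact'' clause.
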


\begin{proof}
Choose an open cover $(V_j)_{j\in J}$ of $Y$ by Tate affinoids. Since $X$ is quasicompact, we can choose a finite cover $(U_i)_{i\in I}$ of $X$ by Tate affinoids such that for all $i$ there is a $j$ such that $f(U_i) \sub V_j$. Now consider a quasicompact open subset $W \sub Y$. We have
\[
f^{-1}(W) = \bigcup_{i\in I} U_i \cap f^{-1}(W),
\]
so it suffices to show that each $U_i \cap f^{-1}(W)$ is quasicompact. Fix $i$ and choose $j$ such that $f(U_i) \sub V_j$. Then we see that $U_i \cap f^{-1}(W) = (f|_{U_i})^{-1}(W\cap V_j)$. Since $Y$ is quasiseparated, $V_j \cap W$ is quasicompact, and $f|_{U_i} : U_i \to V_j$ is quasicompact since $U_i$ and $V_j$ are Tate affinoids, so we conclude that $U_i \cap f^{-1}(W)$ is quasicompact as desired, which finishes the proof.
\end{proof}

To conclude this short section, we now give the proof of Lemma \ref{some topology}. We recall the statement first.

\begin{lemm}\label{some topology proof}
Let $X$ be a taut locally spectral space.
\begin{enumerate}
\item Let $S \sub X$ be a quasicompact subset. Then $\ol{S}$ is quasicompact. In particular, if $x\in X$, then $\ol{\{x\}}$ is quasicompact.

\smallskip

\item Let $S\sub X$ be a quasicompact subset. Then $x\in \ol{S}$ if and only if $x$ is a specialization of a point in $S$.

\smallskip

\item Assume that $X$ is the underlying topological space of an analytic adic space. Let $x \in X$ be a rank one point and let $\Sigma$ be the set of quasicompact open subsets containing $\ol{\{x\}}$. Then $\ol{\{x\}} = \bigcap_{U\in \Sigma} U$.
\smallskip

\item Assume that $X$ is the underlying topological space of an analytic adic space. Let $x \in X$ be a rank one point. Then $\ol{\{x\}} = \bigcap_{U\in \Sigma} \ol{U}$, where $\Sigma$ is as in (3).
\end{enumerate}
\end{lemm}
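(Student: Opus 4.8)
The plan is to prove the two inclusions separately, using parts (2) and (3) of the lemma together with the basic structural facts that $X$, being the space of an analytic adic space, is a valuative space: every point has a unique rank one (maximal) generalization and its set of generalizations is totally ordered.

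First I would dispatch the inclusion $\ol{\{x\}} \sub \bigcap_{U\in\Sigma}\ol{U}$. This is immediate from the definition of $\Sigma$: every $U\in\Sigma$ satisfies $\ol{\{x\}} \sub U \sub \ol{U}$, so $\ol{\{x\}}$ is contained in the intersection.

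The substance is the reverse inclusion $\bigcap_{U\in\Sigma}\ol{U} \sub \ol{\{x\}}$. Fix a point $z$ lying in $\ol{U}$ for every $U\in\Sigma$, and let $\xi$ denote its unique rank one generalization. I would first show that $\xi\in U$ for each $U\in\Sigma$. Since $U$ is quasicompact, part (2) applies to $S=U$ and tells us that $z$ is a specialization of some point $w\in U$; equivalently, $w$ is a generalization of $z$. As the generalizations of $z$ are totally ordered with maximal element $\xi$, the point $\xi$ is a generalization of $w$, i.e. $w\in\ol{\{\xi\}}$. Open sets are stable under generalization: since $w\in U$ and every open neighbourhood of $w$ must contain $\xi$, we get $\xi\in U$. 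As this holds for all $U\in\Sigma$, part (3) yields $\xi\in\bigcap_{U\in\Sigma}U = \ol{\{x\}}$.

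Finally I would identify $\xi$ with $x$. By part (2) applied to $S=\{x\}$, the containment $\xi\in\ol{\{x\}}$ means $\xi$ is a specialization of $x$, so $x$ is a generalization of $\xi$. But $x$ is rank one, hence a maximal generalization of $\xi$; since $\xi$ is itself rank one, it too is a maximal generalization of $\xi$, and uniqueness of rank one generalizations forces $x=\xi$. Therefore $z$ is a specialization of $\xi=x$, i.e. $z\in\ol{\{x\}}$, completing the reverse inclusion. The main obstacle is the middle step — showing that the rank one generalization $\xi$ of an arbitrary point of the intersection already lies inside every $U\in\Sigma$; everything there hinges on combining part (2) (to realize $z$ as a specialization of a genuine point of the quasicompact open $U$) with the total ordering of generalizations and the stability of open sets under generalization. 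Once $\xi$ is pinned inside $\bigcap_{U\in\Sigma}U=\ol{\{x\}}$, the identification $\xi=x$ via uniqueness of rank one generalizations is routine.
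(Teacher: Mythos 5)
Your argument for part (4) is correct, and it is essentially identical to the paper's own proof of that part: pass to the unique rank one generalization $\xi$ of a point $z\in\bigcap_{U\in\Sigma}\ol{U}$, use part (2) to write $z$ as a specialization of some $w\in U$, use the total ordering of generalizations plus stability of open sets under generalization to get $\xi\in U$, then part (3) to place $\xi$ in $\ol{\{x\}}$, identify $\xi=x$, and conclude $z\in\ol{\{x\}}$. (Your invocation of part (2) for $S=\{x\}$ is unnecessary, since in any topological space $\ol{\{x\}}$ is by definition the set of specializations of $x$, but it is harmless.)

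The gap is one of scope: the statement is the entire four-part lemma, and your proposal proves only part (4), taking parts (1)--(3) as given. Those three parts are where the actual work, and all the use of the hypotheses, lies, and none of them is formal. Part (1) is the only place tautness enters: one chooses a quasicompact open $U$ containing $S$, uses tautness to see that $\ol{U}$ is quasicompact, and concludes because $\ol{S}$ is closed in $\ol{U}$. Part (2) is the workhorse: the nontrivial direction (every point of $\ol{S}$ is a specialization of a point of $S$) is proved in the paper by a compactness argument in the constructible topology of a suitable spectral subspace -- the set $G$ of generalizations of $x$ is the intersection of the quasicompact open neighbourhoods of $x$, each of which is constructibly clopen, so if $G\cap S=\emptyset$ then compactness of the constructible topology produces a single quasicompact open neighbourhood of $x$ disjoint from $S$, whence $x\notin\ol{S}$. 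Part (3) in turn needs (1), (2), the observation that $\ol{\{x\}}$ is stable under generalization when $x$ has rank one, and the nontrivial fact that a quasicompact, generalization-stable subset of a spectral space is an intersection of quasicompact open subsets (the paper cites \cite[Tag 0A31]{stacks-project}). Since (4) is a short formal consequence of (2) and (3), a complete proof of the lemma must supply these three arguments; as written, the proposal establishes only the easiest quarter of it.
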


\begin{proof}
For (1), choose a quasicompact open subset $U\sub X$ containing $S$. Then $\ol{U}$ is quasicompact since $X$ is taut, so the closed subset $\ol{S} \sub \ol{U}$ is quasicompact as well. 

\medskip

For part (2), note that any $x$ which is a specialization of a point in $S$ has to be contained in $\ol{S}$, so we need to prove the converse. Assume that $x$ is not a specialization of a point in $S$. Let $\Delta$ be the set of all quasicompact open subsets containing $x$. Set $G= \bigcap_{V\in \Delta} V$; $G$ is the set of all generalizations of $x$ and hence $G \cap S = \emptyset$. Since $X$ is taut we may choose a quasicompact open subset $W$ containing $\ol{S} \cup G$; since $X$ is quasiseparated $W$ is spectral. Then we have $\bigcap_{V\in \Delta} (V \cap U) = G \cap U = \emptyset$ and the $V\cap U$ are closed in the constructible topology on $W$, which is compact and Hausdorff, so there must be a $V \in \Sigma$ such that $V \cap U = \emptyset$. It follows that $x\notin \ol{U}$, as desired.

\medskip

For part (3), note that if $x$ is a rank one point then $\ol{\{x\}}$ is closed under generalizations. By (1) it is also quasicompact, so there is a quasicompact open subset $W^\prime \sub X$ with $\ol{\{x\}} \sub V$, and $W^\prime$ is spectral since $X$ is quasiseparated. By \cite[Tag 0A31]{stacks-project} it follows that $\ol{\{x\}}$ is an intersection of quasicompact open subsets.

\medskip

For part (4), we need to show that any $z\in \bigcap_{U \in \Sigma}\ol{U}$ is contained in $\ol{\{x\}}$. Let $y$ be the unique rank one generalization of $z$ and let $U \in \Sigma$. Since $z \in \ol{U}$, we must have $y \in U$ by part (2). It follows that $y \in \bigcap_{U\in \Sigma} U$, which is equal to $\ol{\{x\}}$ by part (3), and hence $z\in \ol{\{x\}}$ as desired (and $y=x$).
\end{proof}

\section{Some algebra}\label{sec: appalgebra}

Here we recall a few flatness results that are useful for Exercises \ref{ex: flatness1} and \ref{ex: flatness2}. More broadly, they illustrate the usefulness of \emph{completed} local rings in rigid geometry. One place where this is discussed further (and used) is \cite{conrad-irreducible}. We begin by noting that flatness can be checked on completions.

\begin{theo}
Let $A$ be a Noetherian local ring and let $\wh{A}$ be its completion. Then the natural map $A \to \wh{A}$ is faithfully flat. Moreover, if $B$ is another Noetherian local ring and $A \to B$ is a local homomorphism, then $A \to B$ is flat if and only if the induced map $\wh{A} \to \wh{B}$ on completions is flat.
\end{theo}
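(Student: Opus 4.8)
The plan is to reduce everything to three standard inputs: flatness of $\m$-adic completion for Noetherian rings (as in \cite[Tag 00MB]{stacks-project}, already used in the Huber ring case), base change for $\Tor$ along flat ring maps, and the local criterion for flatness. For the first assertion, applying that flatness result with $I=\m$ shows $A\to\wh A$ is flat; since $\wh A$ is local with maximal ideal $\m\wh A$ and $\wh A/\m\wh A = A/\m \neq 0$, the map $A\to\wh A$ is a flat local homomorphism of local rings and hence faithfully flat. The same reasoning applies to $B\to\wh B$, which I will use below.

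For the backward direction of the second assertion, suppose $\wh A\to\wh B$ is flat. Then the composite $A\to\wh A\to\wh B$ is flat, and it coincides with $A\to B\to\wh B$. Since $B\to\wh B$ is faithfully flat, I would descend flatness: for any $A$-module $N$ and any $i\geq 1$, flat base change of $\Tor$ along $B\to\wh B$ gives $\Tor_i^A(N,B)\otimes_B\wh B\cong\Tor_i^A(N,\wh B)=0$, and faithful flatness forces $\Tor_i^A(N,B)=0$. Hence $B$ is flat over $A$. This direction is essentially formal once the composites are identified.

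For the forward direction, suppose $A\to B$ is flat; I want $\wh A\to\wh B$ flat. The key computation is base change of $\Tor$ along the flat map $A\to\wh A$: for a finitely generated $A$-module $N$, resolving $N$ by finite free $A$-modules and tensoring with $\wh A$ (which stays a resolution, as $\wh A$ is flat) yields $\Tor_i^{\wh A}(N\otimes_A\wh A,\wh B)\cong\Tor_i^A(N,\wh B)$, and the right-hand side vanishes for $i\geq 1$ because $A\to\wh B$ is flat (being the composite of the flat map $A\to B$ with the faithfully flat $B\to\wh B$). Taking $N=A/\m$ and using $A/\m\otimes_A\wh A = \wh A/\m\wh A =: \kappa$, the common residue field, this gives $\Tor_1^{\wh A}(\kappa,\wh B)=0$.

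To pass from the vanishing of this single $\Tor$ to full flatness, I would invoke the local criterion for flatness in its ideally-separated form: over the Noetherian local ring $(\wh A,\m\wh A)$, an $\m\wh A$-adically ideally separated module is flat as soon as $\Tor_1^{\wh A}(\kappa,-)$ vanishes. The module $\wh B$ satisfies the separatedness hypothesis because $\m\wh B\subseteq\mathfrak{n}\wh B$ lies in the Jacobson radical of the Noetherian ring $\wh B$, so Krull's intersection theorem gives $\bigcap_n(\m\wh B)^n P=0$ for every finitely generated $\wh B$-module $P$, which is exactly the ideally-separated condition for $\wh B$ over $(\wh A,\m\wh A)$. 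I expect the main obstacle to be precisely this last step: correctly matching the ``ideally separated'' hypothesis of the criterion to what Krull's theorem supplies, and confirming that the criterion applies with the extended maximal ideal $\m\wh A$ rather than $\mathfrak{n}\wh B$. By contrast, the $\Tor$ manipulations in the preceding paragraphs are routine and purely formal.
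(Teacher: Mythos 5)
Your proof is correct and takes essentially the same route as the paper's: faithful flatness of $A \to \wh{A}$ from flatness of completion plus locality of the map, the easy direction by descending flatness along the faithfully flat map $B \to \wh{B}$ via $\Tor$, and the hard direction by reducing, through flat base change of $\Tor$, to the local flatness criterion. The paper simply cites Matsumura (Theorems 8.8 and 22.4(i)) for the two key inputs, and your ideally-separated/Krull verification is exactly what that packaged citation unwinds to, so there is nothing to correct.
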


\begin{proof}
Recall that, for a local homomorphism of local rings, flatness is equivalent to faithful flatness. Flatness of $A \to \wh{A}$ is then \cite[Theorem 8.8]{matsumura}. That flatness of $\wh{A} \to \wh{B}$ implies flatness of $A \to B$ then follows easily, and the converse follows from \cite[Theorem 22.4(i)]{matsumura}.
\end{proof}

The other fact we wish to recall is about checking flatness for ring maps that arise as direct limits. We first need the following lemma.

\begin{lemm}\label{modules are direct limit of fp modules}
Let $A$ be any ring and let $M$ be any $A$-module. Then $M$ can be written as a direct limit of finitely presented $A$-modules.
\end{lemm}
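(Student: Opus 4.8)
The plan is to realize $M$ as a filtered colimit of its ``finite pieces'', keeping track of finitely many generators and finitely many relations at a time. First I would fix the tautological presentation: let $F = A^{(M)}$ be the free $A$-module on the underlying set of $M$, with basis $(e_m)_{m\in M}$, let $\pi : F \to M$ be the surjection $e_m \mapsto m$, and set $K = \Ker(\pi)$. For a finite subset $S \subseteq M$ write $A^S \subseteq F$ for the (finite free) submodule spanned by $\{e_m : m\in S\}$.

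Next I would set up the indexing system. Let $\Lambda$ be the set of pairs $(S,T)$, where $S\subseteq M$ is finite and $T \subseteq K \cap A^S$ is a finite subset, partially ordered by $(S,T) \leq (S',T')$ if and only if $S\subseteq S'$ and $T\subseteq T'$ (using the inclusion $A^S \hookrightarrow A^{S'}$). To each such pair I associate $M_{(S,T)} := A^S / \langle T \rangle$, where $\langle T\rangle$ denotes the $A$-submodule generated by $T$; this is finitely presented, since $A^S$ is finite free and $\langle T\rangle$ is finitely generated. The inclusions $A^S \hookrightarrow A^{S'}$ carry $\langle T\rangle$ into $\langle T'\rangle$ and hence induce transition maps $M_{(S,T)} \to M_{(S',T')}$; moreover each composite $A^S \hookrightarrow F \overset{\pi}{\to} M$ kills $T \subseteq K$ and so factors through a compatible map $M_{(S,T)} \to M$. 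A routine check shows $\Lambda$ is directed: given two pairs, $(S_1\cup S_2,\, T_1 \cup T_2)$ dominates both.

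I would then show the induced map $\colim_{(S,T)} M_{(S,T)} \to M$ is an isomorphism. Surjectivity is immediate, since any $m \in M$ is hit by $M_{(\{m\},\emptyset)}$. For injectivity, take a class in the colimit mapping to $0$, represent it by some $x \in M_{(S,T)}$, and lift $x$ to $\tilde{x} \in A^S$; the image of $\tilde{x}$ in $M$ is then $0$, so $\tilde{x} \in K\cap A^S$. Enlarging the relation set to $T' = T \cup \{\tilde{x}\}$ (keeping $S$ fixed) kills the image of $x$ already at stage $(S,T')$, so the class is $0$ in the colimit.

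The content of the argument is essentially this injectivity step: it is precisely the observation that any element of $K$ witnessing a relation lives in some finite free $A^S$ and can therefore be adjoined as one more relation at a finite stage. There are no real obstacles beyond bookkeeping; the only point to be careful about is that the relations $T$ are taken inside $K \cap A^S$ (not merely inside $K$), so that the finitely presented quotients $M_{(S,T)}$ genuinely map to $M$ and assemble into a filtered system over $\Lambda$.
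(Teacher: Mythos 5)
Your proof is correct. It differs in organization from the paper's argument: the paper proceeds in two stages, first writing $M$ as the directed union of its finitely generated submodules, and then writing each finitely generated module $A^n/V$ as $\varinjlim_J A^n/V_J$ over finite subsets $J$ of a generating set of the relation module $V$, and finally ``combining'' the two colimits. Your construction collapses this into a single filtered system indexed by pairs $(S,T)$ of finitely many generators \emph{and} finitely many relations at once, with the approximating modules $A^S/\langle T\rangle$ not required to be submodules of $M$. The trade-off: the paper's route is shorter to state and each intermediate object in its first stage sits inside $M$, but the final combination of the two colimits into one is left implicit (it needs a cofinality or rearrangement argument to be fully rigorous); your version does all the bookkeeping explicitly in one pass, with the injectivity step (adjoining a lifted relation $\tilde{x}\in K\cap A^S$ to $T$) carrying the entire content, so nothing is glossed over. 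Both are standard and correct; yours is closer to a complete proof, the paper's to a sketch.
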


\begin{proof}
We sketch a proof. First, $M$ is the union of its finitely generated submodules (and this union is direct). Second, writing a finitely generated $A$-module $N$ as $A^n/V$ with $V\sub A^n$ generated by $\{v_i \mid i\in I\}$ say, $N$ is the direct limit $\varinjlim_J A^n/V_J$ of the finitely presented modules $A^n/V_J$, where $J$ ranges over the finite subsets of $I$ and $V_J\sub A^n$ is the submodule generated by $\{v_i \mid i\in J\}$. Combining these gives the result.
\end{proof}

\begin{prop}
Let $A = \varinjlim_{i\in I} A_i$ and $B = \varinjlim_{j \in J}B_j$ be two rings, written as direct limits of rings with flat transition maps. Assume that we have an order preserving map $f : I \to J$ and ring maps $\varphi_i : A_i \to B_{f(i)}$ for $i$, such that whenever $i < k$, the diagram
\[
\xymatrix{
A_i \ar[r]^{\varphi_i}\ar[d] & B_{f(i)}  \ar[d] \\
A_k \ar[r]^{\varphi_k} & B_{f(k)}
}
\]
commutes. Then we get an induced map $\varphi : A \to B$. The following holds:
\begin{enumerate}
\item The maps $A_i \to A$ are flat (and similarly for $B$).

\item If $\varphi_i$ is flat for all $i$, then $\varphi$ is flat as well.
\end{enumerate}
\end{prop}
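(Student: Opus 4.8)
For part (1), fix $i\in I$. The plan is to invoke the standard fact that a filtered colimit of flat modules is flat. Since $I$ is directed, the subset $\{k\in I : k\geq i\}$ is cofinal, so $A = \varinjlim_{k\geq i} A_k$ as $A_i$-modules. For each such $k$ the transition map $A_i \to A_k$ is flat, being a composite of flat transition maps (and composites of flat ring maps are flat), so each $A_k$ is a flat $A_i$-module. Hence $A$ is a filtered colimit of flat $A_i$-modules and is therefore flat over $A_i$. The same argument applies verbatim to $B$, giving flatness of $B_j \to B$.

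For part (2), I would test flatness of $\varphi$ by showing that $\Tor_1^A(M,B) = 0$ for every $A$-module $M$. First I would reduce to the case of finitely presented $M$: by Lemma \ref{modules are direct limit of fp modules} any $M$ is a filtered colimit of finitely presented $A$-modules, and since $\Tor$ commutes with filtered colimits in the first variable, it suffices to treat finitely presented $M$. Such an $M$ moreover descends to finite level: writing $M = \Coker(A^m \to A^n)$ and using that the finitely many entries of the presentation matrix all lie in the image of some $A_i$, I obtain a finitely presented $A_i$-module $M_i$ with $M \cong M_i \otimes_{A_i} A$. Because $A_i \to A$ is flat by part (1), a free $A_i$-resolution of $M_i$ base-changes to a free $A$-resolution of $M$, which yields the flat base-change identity $\Tor_1^A(M, B) \cong \Tor_1^{A_i}(M_i, B)$, where $B$ is viewed as an $A_i$-module through $A_i \xrightarrow{\varphi_i} B_{f(i)} \to B$.

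It then remains to show $\Tor_1^{A_i}(M_i, B) = 0$. Here I would use that $\{j\in J : j\geq f(i)\}$ is cofinal in $J$, so $B = \varinjlim_{j\geq f(i)} B_j$ as $A_i$-modules, each $B_j$ carrying its $A_i$-algebra structure via $A_i \xrightarrow{\varphi_i} B_{f(i)} \to B_j$. For $j\geq f(i)$ this structure map is a composite of the flat map $\varphi_i$ with a flat transition map of the $B$-system, hence flat, so $B_j$ is a flat $A_i$-module and $\Tor_1^{A_i}(M_i, B_j) = 0$. Since $\Tor$ commutes with the filtered colimit in the second variable, $\Tor_1^{A_i}(M_i, B) = \varinjlim_{j\geq f(i)} \Tor_1^{A_i}(M_i, B_j) = 0$, and combining with the previous step gives $\Tor_1^A(M, B) = 0$ for all finitely presented $M$, hence for all $M$; thus $\varphi$ is flat. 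The main obstacle is purely organizational: one must simultaneously descend $M$ to an $A_i$-module, reconcile the two descriptions of the $A_i$-module structure on $B$, and invoke flat base change for $\Tor$ (which is exactly where part (1) is used). Once these are aligned, the vanishing is immediate.
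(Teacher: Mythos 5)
Your proof is correct and follows essentially the same route as the paper's: part (1) via filtered colimits of flat modules, and part (2) by reducing to finitely presented $M$ using commutation of $\Tor$ with direct limits, descending $M$ to a finitely presented $A_i$-module, and applying flat base change (using part (1)) to reduce to vanishing of $\Tor^{A_i}_1(M_i,B)$. The only cosmetic difference is at the very end, where the paper simply notes that $A_i \to B_{f(i)} \to B$ is a composite of flat maps (citing part (1) for $B_{f(i)} \to B$), whereas you re-run the colimit argument over $\{j \geq f(i)\}$ -- these are the same observation.
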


\begin{proof}
A direct limit of flat modules is flat, so part (1) follows by writing $A = \varinjlim_{k\geq i} A_k$ and noting that all $A_k$ are flat $A_i$-modules by assumption. 

For part (2), it suffices to prove that $\Tor_q^A(M,B)=0$ for all $q\geq 1$ and all $A$-modules $M$. Since $\Tor$ commutes with direct limits\footnote{To see this, resolve in the other variable and use that direct limits are exact and commute with tensor products.} in either variable, it suffices to check this for finitely presented $M$ by Lemma \ref{modules are direct limit of fp modules}. For such $M$, we may find an $i\in I$ and a finitely presented $A_i$-module $M_i$ such that $M = M_i \otimes_{A_i}A$ (write $M$ as the cokernel of a finite matrix). Using flatness of $A_i \to A$, it then follows that
\[
\Tor_q^A(M,B) = \Tor_q^{A_i}(M_i,B).
\]
Finally, $A_i \to B$ is flat since it is the composition $A_i \to B_{f(i)} \to B$ of flat maps, so these $\Tor$-groups vanish when $q\geq 1$, as desired.
\end{proof}

\newpage



\bibliographystyle{alpha}
\bibliography{heidelbergbib}

\begin{thebibliography}{BCKW19}

\bibitem[AIP15]{aip-siegel}
Fabrizio Andreatta, Adrian Iovita, and Vincent Pilloni.
\newblock {$p$}-adic families of {S}iegel modular cuspforms.
\newblock {\em Ann. of Math. (2)}, 181(2):623--697, 2015.

\bibitem[AIP18]{aip-halo}
Fabrizio Andreatta, Adrian Iovita, and Vincent Pilloni.
\newblock Le halo spectral.
\newblock {\em Ann. Sci. \'{E}c. Norm. Sup\'{e}r. (4)}, 51(3):603--655, 2018.

\bibitem[ALY23]{achinger-lara-youcis}
Piotr Achinger, Marcin Lara, and Alex Youcis.
\newblock Geometric arcs and fundamental groups of rigid spaces.
\newblock {\em J. Reine Angew. Math.}, 799:57--107, 2023.

\bibitem[And21]{andreychev}
Grigory Andreychev.
\newblock Pseudocoherent and {P}erfect {C}omplexes and {V}ector {B}undles on
  {A}nalytic {A}dic {S}paces.
\newblock \url{https://arxiv.org/abs/2105.12591}, 2021.

\bibitem[Bak08]{baker}
Matthew Baker.
\newblock An introduction to {B}erkovich analytic spaces and non-{A}rchimedean
  potential theory on curves.
\newblock In {\em {$p$}-adic geometry}, volume~45 of {\em Univ. Lecture Ser.},
  pages 123--174. Amer. Math. Soc., Providence, RI, 2008.

\bibitem[BCKW19]{aws-perfectoid}
Bhargav Bhatt, Ana Caraiani, Kiran~S. Kedlaya, and Jared Weinstein.
\newblock {\em Perfectoid spaces}, volume 242 of {\em Mathematical Surveys and
  Monographs}.
\newblock American Mathematical Society, Providence, RI, 2019.
\newblock Lectures from the 2017 Arizona Winter School, held in Tucson, AZ,
  March 11--17, Edited and with a preface by Bryden Cais, With an introduction
  by Peter Scholze.

\bibitem[Ber90]{berkovich-book}
Vladimir~G. Berkovich.
\newblock {\em Spectral theory and analytic geometry over non-{A}rchimedean
  fields}, volume~33 of {\em Mathematical Surveys and Monographs}.
\newblock American Mathematical Society, Providence, RI, 1990.

\bibitem[Ber24]{bergdall-heid}
John Bergdall.
\newblock Huber rings and valuation spectra.
\newblock {\em This volume}, 2024.

\bibitem[Bos14]{bosch}
Siegfried Bosch.
\newblock {\em Lectures on formal and rigid geometry}, volume 2105 of {\em
  Lecture Notes in Mathematics}.
\newblock Springer, Cham, 2014.

\bibitem[Buz07]{buzzard-eigenvarieties}
Kevin Buzzard.
\newblock Eigenvarieties.
\newblock In {\em {$L$}-functions and {G}alois representations}, volume 320 of
  {\em London Math. Soc. Lecture Note Ser.}, pages 59--120. Cambridge Univ.
  Press, Cambridge, 2007.

\bibitem[Chi90]{chiarellotto}
Bruno Chiarellotto.
\newblock Duality in rigid analysis.
\newblock In {\em {$p$}-adic analysis ({T}rento, 1989)}, volume 1454 of {\em
  Lecture Notes in Math.}, pages 142--172. Springer, Berlin, 1990.

\bibitem[Con99]{conrad-irreducible}
Brian Conrad.
\newblock Irreducible components of rigid spaces.
\newblock {\em Ann. Inst. Fourier (Grenoble)}, 49(2):473--541, 1999.

\bibitem[Con06]{conrad-relative-ampleness}
Brian Conrad.
\newblock Relative ampleness in rigid geometry.
\newblock {\em Ann. Inst. Fourier (Grenoble)}, 56(4):1049--1126, 2006.

\bibitem[CS24]{clause-scholze-lectures}
Dustin Clausen and Peter Scholze.
\newblock Analytic {S}tacks.
\newblock
  \url{https://www.youtube.com/playlist?list=PLx5f8IelFRgGmu6gmL-Kf_Rl_6Mm7juZO},
  2024.

\bibitem[dJ95]{dejong}
A.~J. de~Jong.
\newblock Crystalline {D}ieudonn\'e{} module theory via formal and rigid
  geometry.
\newblock {\em Inst. Hautes \'Etudes Sci. Publ. Math.}, (82):5--96, 1995.

\bibitem[Eme17]{emerton-locallyanalytic}
Matthew Emerton.
\newblock Locally analytic vectors in representations of locally {$p$}-adic
  analytic groups.
\newblock {\em Mem. Amer. Math. Soc.}, 248(1175):iv+158, 2017.

\bibitem[FvdP04]{fresnel-vanderput}
Jean Fresnel and Marius van~der Put.
\newblock {\em Rigid analytic geometry and its applications}, volume 218 of
  {\em Progress in Mathematics}.
\newblock Birkh\"{a}user Boston, Inc., Boston, MA, 2004.

\bibitem[Har77]{hartshorne}
Robin Hartshorne.
\newblock {\em Algebraic geometry}.
\newblock Graduate Texts in Mathematics, No. 52. Springer-Verlag, New
  York-Heidelberg, 1977.

\bibitem[Hen16]{henkel}
Timo Henkel.
\newblock A comparison of adic spaces and {B}erkovich spaces, 2016.

\bibitem[Hub94]{hu-generalization}
R.~Huber.
\newblock A generalization of formal schemes and rigid analytic varieties.
\newblock {\em Math. Z.}, 217(4):513--551, 1994.

\bibitem[Hub96]{hu-etale}
Roland Huber.
\newblock {\em \'{E}tale cohomology of rigid analytic varieties and adic
  spaces}.
\newblock Aspects of Mathematics, E30. Friedr. Vieweg \& Sohn, Braunschweig,
  1996.

\bibitem[H{\"u}b24]{hubner-heid}
Katharina H{\"u}bner.
\newblock Adic spaces.
\newblock {\em This volume}, 2024.

\bibitem[K\"74]{kopf}
Ursula K\"{o}pf.
\newblock \"{U}ber eigentliche {F}amilien algebraischer {V}ariet\"{a}ten
  \"{u}ber affinoiden {R}\"{a}umen.
\newblock {\em Schr. Math. Inst. Univ. M\"{u}nster (2)}, (Heft 7):iv+72, 1974.

\bibitem[Ked19]{kedlaya-aws}
Kiran~S. Kedlaya.
\newblock Sheaves, stacks, and shtukas.
\newblock In {\em Perfectoid spaces}, volume 242 of {\em Mathematical Surveys
  and Monographs}, pages 45--191. Amer. Math. Soc., Providence, RI, 2019.

\bibitem[KL15]{kedlaya-liu-i}
Kiran~S. Kedlaya and Ruochuan Liu.
\newblock Relative {$p$}-adic {H}odge theory: foundations.
\newblock {\em Ast\'{e}risque}, (371):239, 2015.

\bibitem[KL16]{kedlaya-liu-ii}
Kiran~S. Kedlaya and Ruochuan Liu.
\newblock Relative p-adic {H}odge theory, ii: {I}mperfect period rings, 2016.

\bibitem[Liu90]{liu}
Qing Liu.
\newblock Sur les espaces de {S}tein quasi-compacts en g\'{e}om\'{e}trie
  rigide.
\newblock {\em Tohoku Math. J. (2)}, 42(3):289--306, 1990.

\bibitem[Lud23]{ludwig-notes}
Judith Ludwig.
\newblock Topics in the theory of adic spaces.
\newblock \url{https://www.judith-ludwig.com/teaching}, 2023.

\bibitem[Lud24]{ludwig-heid}
Judith Ludwig.
\newblock Spectral theory and the {E}igenvariety machine.
\newblock {\em This volume}, 2024.

\bibitem[Mat89]{matsumura}
Hideyuki Matsumura.
\newblock {\em Commutative ring theory}, volume~8 of {\em Cambridge Studies in
  Advanced Mathematics}.
\newblock Cambridge University Press, Cambridge, second edition, 1989.
\newblock Translated from the Japanese by M. Reid.

\bibitem[Mik24]{mikami}
Yutaro Mikami.
\newblock Faithfully flat descent of quasi-coherent complexes on rigid analytic
  varieties via condensed mathematics.
\newblock {\em Int. Math. Res. Not. IMRN}, (8):7099--7128, 2024.

\bibitem[MP21]{maculan-poineau-stein}
Marco Maculan and J\'{e}r\^{o}me Poineau.
\newblock Notions of {S}tein spaces in non-{A}rchimedean geometry.
\newblock {\em J. Algebraic Geom.}, 30(2):287--330, 2021.

\bibitem[Poi10]{poineau-gaga}
J\'{e}r\^{o}me Poineau.
\newblock Raccord sur les espaces de {B}erkovich.
\newblock {\em Algebra Number Theory}, 4(3):297--334, 2010.

\bibitem[Sch12]{scholze-perfectoid}
Peter Scholze.
\newblock Perfectoid spaces.
\newblock {\em Publ. Math. Inst. Hautes \'{E}tudes Sci.}, 116:245--313, 2012.

\bibitem[Sch19]{scholze-condensed}
Peter Scholze.
\newblock Lectures on {C}ondensed {M}athematics.
\newblock \url{https://people.mpim-bonn.mpg.de/scholze/Condensed.pdf}, 2019.

\bibitem[Sch20]{scholze-analytic}
Peter Scholze.
\newblock Lectures on {A}nalytic {G}eometry.
\newblock \url{https://people.mpim-bonn.mpg.de/scholze/Analytic.pdf}, 2020.

\bibitem[Sch21]{scholze-mathoverflow}
Peter Scholze.
\newblock Answer to ``{C}ondensed criterion for sheafiness of adic spaces'',
  {M}ath{O}verflow.
\newblock
  \url{https://mathoverflow.net/questions/380235/condensed-criterion-for-sheafiness-of-adic-spaces},
  2021.

\bibitem[ST03]{schneider-teitelbaum-dist}
Peter Schneider and Jeremy Teitelbaum.
\newblock Algebras of {$p$}-adic distributions and admissible representations.
\newblock {\em Invent. Math.}, 153(1):145--196, 2003.

\bibitem[{Sta}18]{stacks-project}
The {Stacks Project Authors}.
\newblock \textit{Stacks Project}.
\newblock \url{https://stacks.math.columbia.edu}, 2018.

\bibitem[SW20]{berkeleylectures}
Peter Scholze and Jared Weinstein.
\newblock {\em Berkeley lectures on {$p$}-adic geometry}, volume 207 of {\em
  Annals of Mathematics Studies}.
\newblock Princeton University Press, Princeton, NJ, 2020.

\bibitem[Tem00]{temkin}
M.~Temkin.
\newblock On local properties of non-{A}rchimedean analytic spaces.
\newblock {\em Math. Ann.}, 318(3):585--607, 2000.

\bibitem[Wei94]{weibel}
Charles~A. Weibel.
\newblock {\em An introduction to homological algebra}, volume~38 of {\em
  Cambridge Studies in Advanced Mathematics}.
\newblock Cambridge University Press, Cambridge, 1994.

\bibitem[Zav23]{zavyalov-sheafiness}
Bogdan Zavyalov.
\newblock Sheafiness of strongy rigid-noetherian {H}uber pairs.
\newblock {\em Math. Ann.}, 386(3-4):1305--1324, 2023.

\bibitem[Zav24]{zavyalov-quotients}
Bogdan Zavyalov.
\newblock Quotients of admissible formal schemes and adic spaces by finite
  groups.
\newblock {\em Algebra Number Theory}, 18(3):409--475, 2024.

\end{thebibliography}









\end{document}